\documentclass[11pt, oneside]{amsart} 	              		
\usepackage{graphicx}				
\usepackage{amssymb, amsmath, amsthm, mathtools, xcolor, stmaryrd, enumitem,hyperref,tikz-cd, todonotes}

\title[GK bound for Banach representations with inf.~character]{Gelfand-Kirillov bound for $p$-adic Banach representations with infinitesimal character for $\GL_2$ and quaternion units}
\author{Reinier Sorgdrager}

\DeclareMathOperator{\orb}{orb}
\DeclareMathOperator{\Lie}{Lie}
\DeclareMathOperator{\GL}{GL}
\DeclareMathOperator{\Gal}{Gal}
\DeclareMathOperator{\Ext}{Ext}
\DeclareMathOperator{\Frob}{Frob}
\DeclareMathOperator{\id}{id}
\DeclareMathOperator{\Ann}{Ann}

\DeclareMathOperator{\Hom}{Hom}
\DeclareMathOperator{\Span}{Span}

\DeclareMathOperator{\Spec}{Spec}
\DeclareMathOperator{\SpecMax}{SpecMax}

\DeclareMathOperator{\Ind}{Ind}

\DeclareMathOperator{\Kdim}{K.dim}

\DeclareMathOperator{\gl}{\mathfrak{gl}}
\DeclareMathOperator{\gr}{gr}
\def\Fil{\mathop{\mathrm{Fil}}\nolimits}
\newtheorem{proposition}{Proposition}[section]
\newtheorem{corollary}{Corollary}[section]
\newtheorem{remark}{Remark}[section]
\newtheorem{lemma}{Lemma}[section]
\newtheorem{theorem}{Theorem}[section]

\newtheorem*{theorem*}{Theorem}
\theoremstyle{definition}
\newtheorem{definition}{Definition}[section]
\begin{document}
	\maketitle
	\begin{abstract}
		We prove that an admissible $p$-adic Banach representation of $\GL_2K$ whose locally analytic vectors have an infinitesimal character has Gelfand-Kirillov dimension $\leq[K\colon\mathbf Q_p]$, where $p>2$ and $K$ is a $p$-adic field. We also prove this for the group of units of the quaternions over $K$ replacing $\GL_2K$. In the process, we make some observations in the theory of $p$-adic Banach representations that might be of independent interest.
	\end{abstract}
	\tableofcontents
	\section{Introduction}
	The Gelfand-Kirillov dimension\footnote{Often going by the name ``canonical dimension'' but not in this article.} is an invariant of $p$-adic representations of $p$-adic Lie groups, such as smooth mod $p$ representations, $p$-adic Banach representations, or locally analytic representations.\\
	The invariant has received significant attention in the $p$-adic Langlands program, notably so in the work \cite{BHHMS1}. There it is proven, under sufficient assumptions, that the smooth mod $p$ representation $\pi$ of $\GL_2\mathbf Q_{p^f}$ associated to a $2$-dimensional mod $p$ Galois representation $\overline\rho$ of the absolute Galois group of $\mathbf Q_{p^f}$ via the geometry of Shimura curves has Gelfand-Kirillov dimension equal to $f$.\\
	Their statement has significant consequences, many of them technical and explored by the same authors in subsequent works. Let us only single out a rather immediate application, which is the non-zeroness of candidates for a conjectural $p$-adic Langlands correspondence for $\GL_2\mathbf Q_{p^f}$ as proposed by \cite{Caraiani_Emerton_Gee_Geraghty_Paškūnas_Shin_2018} via patching.\\ Namely, there is a patched module $\mathbb M_\infty$ over a complete local ring $R_\infty$ that is formally smooth over the framed Galois deformation ring\footnote{One actually also has to fix central characters in the setup of \cite{BHHMS1}, but let us ignore about this technical fact here.} $R_{\overline\rho}^\square$ of $\overline\rho$. This module admits an action of $\GL_2\mathbf Q_{p^f}$ and its special fiber is the dual $\pi^\vee$ of the mod $p$ representation $\pi$. If $y\in\SpecMax R_\infty[1/p]$, then it lies above a maximal ideal $x$ of $R_{\overline\rho}^\square[1/p]$ (that is a lift $\rho_x$ of $\overline\rho$ to a $p$-adic field) and $\mathbb M_\infty[1/p]/y$ is the dual of a $p$-adic Banach representation $\Pi_y$ of $\GL_2\mathbf Q_{p^f}$. In \cite{Caraiani_Emerton_Gee_Geraghty_Paškūnas_Shin_2018} it is conjectured that $\Pi_y$ depends in fact only depends on $x$ and in this way should correspond to $\rho_x$ via a conjectural $p$-adic Langlands correspondence for $\GL_2\mathbf Q_{p^f}$. Problematically, proving $\Pi_y$ is determined only by $x$ is very hard and a priori it is not even clear whether $\Pi_y\neq0$.\\
	However, \cite{BHHMS1} managed to prove at least that $\Pi_y\neq0$ for all $y$: the non-zeroness follows immediately from faithful flatness of $\mathbb M_\infty$ over $R_\infty$, which is a consequence of the Gelfand-Kirillov dimension being $f$ via ``Miracle Flatness'', as already observed by \cite{Gee_Newton_2022}. In fact, via the results from the appendix of \cite{Gee_Newton_2022} it is relatively straightforward to bound the Gelfand-Kirillov dimension of $\pi$ from below by $f$. The main contribution of \cite{BHHMS1} lies therefore in providing the upper bound, which is considered to be the hard part. Our work is also concerned with an upper bound, but in the Banach setting.
	\subsection*{Main theorem}
	In the context of $p$-adic Banach representations, \cite{dospinescu2023gelfandkirillovdimensionpadicjacquetlanglands} provide general bounds for the Gelfand-Kirillov dimension, when these admit an infinitesimal character: if $G$ is a connected reductive group over $\mathbf Q_p$ and $d_G$ is the dimension of the flag variety of $G_{\overline{\mathbf Q}_p}$, then one of their main results is that the Gelfand-Kirillov dimension of an admissible $p$-adic Banach representation of $G(\mathbf Q_p)$ whose locally analytic vectors admit an infinitesimal character is $<2d_G$.\\
	In \cite[Rem.~6.2]{dospinescu2023gelfandkirillovdimensionpadicjacquetlanglands} they raise the question whether one could prove the stronger bound $\leq d_G$. The main result of this paper is an affirmative answer to this question, in the case of $\GL_2$ or quaternion units:
		\begin{theorem*}[{Theorem \ref{mainmainthm}}]
		Let $p>2$ and let $K$ and $L$ be $p$-adic fields. Let $\Pi$ be an admissible $L$-Banach space representation of either $\GL_2K$ or of $D^\times$, where $D$ are the quaternions over $K$. If the locally analytic vectors of $\Pi$ have an infinitesimal character, then $\Pi$ has Gelfand-Kirillov dimension $\leq[K\colon\mathbf Q_p]$.
	\end{theorem*}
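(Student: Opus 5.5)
We plan to reduce the bound on the Gelfand--Kirillov dimension of $\Pi$ to a bound on the dimension of the support, in the nilpotent cone, of a graded module attached to the dual of $\Pi$, and then to use the infinitesimal character to cut that support down to the nilpotent cone. Fix a uniform pro-$p$ open subgroup $H$ of $\GL_2K$ (resp.\ $D^\times$); for $p>2$ such subgroups exist and are small enough. The dual $M=\Pi'$ is a finitely generated module over $L\otimes\mathbf Z_p[[H]]$, and its GK dimension is computed as $\dim \mathbf Z_p[[H]]-j(M)$, where $j$ is the grade. Equivalently, it is the dimension of the support of $\gr M$ over $\gr\,\mathbf F_p[[H]]\cong\mathbf F_p[x_1,\dots,x_d]$, after reduction mod $p$ of a lattice. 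Here $d=4[K:\mathbf Q_p]$ and $x_i$ correspond to a basis of $\mathfrak h=\Lie H$, a $\mathbf Z_p$-lattice in $\mathfrak g=\gl_2K$ (resp.\ $\Lie D^\times$).

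\textbf{Step 1 (passage to locally analytic vectors).} By Schneider--Teitelbaum, $M\otimes_{\Lambda}D(H)$ is the dual of $\Pi^{\mathrm{la}}$, and $D(H)$ is faithfully flat over $\Lambda=L\otimes\mathbf Z_p[[H]]$. Hence the central elements $z\in Z(\mathfrak g)$ act by scalars $\chi(z)$ on $M\otimes D(H)$. We then use the Fréchet--Stein filtration $D_r(H)$ for $r\to 1^-$, or for convenience $r=p^{-1/p^m}$ suitably. The main trick, following Ardakov--Wadsley and Dospinescu--Paškūnas--Schraen, is that the grade, and hence the GK dimension, can be read off from $M_r=D_r(H)\otimes M$. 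Moreover $\gr D_r(H)$ for suitable $r$ contains $\gr U(\mathfrak h)=\mathrm{Sym}(\mathfrak h)\otimes k$ as a polynomial subalgebra, over which $\gr D_r$ is finite or flat.

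\textbf{Step 2 (support in the nilpotent cone).} The infinitesimal character gives that $z-\chi(z)$ annihilates $M_r$. Passing to graded modules, the principal symbols of the generators of $Z(\mathfrak g)$ annihilate $\gr M_r$. For $\mathfrak g\otimes_{\mathbf Q_p}\overline{\mathbf Q}_p\cong\prod_{\sigma:K\to\overline{\mathbf Q}_p}\gl_2$, these symbols are the invariant polynomials $\mathrm{tr}_\sigma$ and $\det_\sigma$ for each embedding. Their common zero locus is $\prod_\sigma(\text{scalar-free nilpotent cone of }\mathfrak{sl}_2)\times(\text{center})$. This needs care: the trace symbol kills the center direction, and the determinant symbol gives the $2$-dimensional nilpotent cone of $\mathfrak{sl}_2$. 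The zero locus therefore has dimension $2[K:\mathbf Q_p]$ inside the $4[K:\mathbf Q_p]$-dimensional space. Note this is $d_G$ times $2$; it is only the bound $2d_G$ of \cite{dospinescu2023gelfandkirillovdimensionpadicjacquetlanglands}.

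\textbf{Step 3 (halving: the main obstacle).} To gain the factor $2$ we need an involutivity (Gabber) statement: the characteristic variety of $\gr M$ is coisotropic for the Poisson bracket induced from the Lie bracket. A coisotropic subvariety of a nilpotent cone, which is a union of symplectic orbit closures of dimension $2[K:\mathbf Q_p]$, has dimension at least half of that. We use this in the reverse direction: each irreducible component lies in the nilpotent cone and is coisotropic in the ambient Poisson variety $\mathfrak g^*$. Its intersection with symplectic leaves must then be coisotropic in the leaf, and by the structure of $\mathfrak{sl}_2$ nilpotent orbits this forces, I expect, dimension $\le [K:\mathbf Q_p]$ once one also uses that the GK dimension equals the dimension of the variety over the $\mathbf F_p$-graded ring. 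The difficulty is that Gabber's integrability theorem is for characteristic-zero filtered rings. In the mod $p$ completed group ring setting, $\gr\mathbf F_p[[H]]$ is commutative but its Poisson bracket degenerates. I would try to work instead with $\gr D_r(H)$ for $r$ close to $1$ in the Ardakov--Wadsley framework, where the bracket on $\mathrm{Sym}(\mathfrak h\otimes k)$ is the reduction of the Lie bracket. I would then prove a characteristic-$p$ version of involutivity using that $p>2$ and that the orbits in $\mathfrak{sl}_2(k)$ are explicitly known (regular nilpotent orbit of dimension $2$ per factor). For $D^\times$, over $\overline{\mathbf Q}_p$ the Lie algebra is the same, and $\mathfrak h$ is a different lattice whose reduction is a possibly non-split, even nilpotent, form. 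So here I would pass to a deeper uniform subgroup, or exploit that the reduction of the norm form still cuts out a cone of the same dimension. That step I expect to need the most case-specific work.
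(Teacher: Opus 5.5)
\textbf{There is a genuine gap in Step 3.} Your Steps 1--2 at best give the nilpotent-cone bound $2[K\colon\mathbf Q_p]$, as you note yourself. Step 3 cannot supply the missing factor $2$, because it uses involutivity in the wrong direction.

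Gabber-type involutivity says each component of the characteristic variety is coisotropic. That gives a \emph{lower} bound (at least half the dimension of the symplectic leaf), never an upper bound. The whole nilpotent cone is involutive: its open part is the regular orbit, which is a symplectic leaf. So ``contained in $\mathcal N$ and coisotropic'' is compatible with dimension $2[K\colon\mathbf Q_p]$. Indeed, in characteristic $0$ the analogue of your target is false: $U(\mathfrak g)/U(\mathfrak g)\ker\lambda$ has an infinitesimal character and GK dimension $\dim\mathcal N$.

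So the halving must use that the module comes from a finitely generated $\mathcal O_L\llbracket H\rrbracket$-lattice, and your outline never exploits this beyond a single radius $r$. At a single radius, $\gr_rM_r=\gr_rM_\circ[1/\varepsilon]$ only sees the Casimir symbol, i.e.\ the cone. Your choice of a uniform subgroup with all basis elements in degree $1$ makes this worse. Then at every level $N$ the mod $\varepsilon_L$ symbol is just $(\tfrac12 h^2+2ef)^{p^N}$, and you can never get below the cone. The paper explicitly warns against this choice.

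The missing idea is arithmetic rather than symplectic. The paper takes a non-uniform, Iwahori-tilted strictly saturated $p$-valuation (obtained by conjugating by $\mathrm{diag}(1,\sqrt{\varpi_K})$). With it, the rescaled Casimir has principal symbol that is a polynomial in $\varepsilon_K$ with coefficients in $\gr k_L\llbracket H\rrbracket$; for unramified $K$ this is $\tfrac12\mathbf h^2+2\varepsilon\,\mathbf e\mathbf f$. The argument then runs over the whole family of radii $r_N=p^{-1/p^N}$:
\begin{enumerate}
\item For $N\gg0$, $\gr_{r_N}M_\circ$ is $\varepsilon$-torsion free. Hence $\gr_{r_N}M_\circ\hookrightarrow\gr_{r_N}M_{r_N}$, where $C_N-\lambda(C_N)$ acts by $0$. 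At the same time $\gr_{r_N}M_\circ/\varepsilon_L=\gr\overline M$ is independent of $N$.
\item At level $N$ the symbol is $\tfrac12\mathbf h^{2p^N}+2\varepsilon\,\mathbf e^{p^N}\mathbf f^{p^N}$. The variables are raised to the $p^N$-th power, but $\varepsilon$ is not.
\item Reducing mod $\varepsilon_L$ shows $\overline{\mathbf h}$ is nilpotent on $\gr\overline M$.
\item Consequently, for larger $N'$, $\mathbf h^{p^{N'}}$ maps $\gr_{r_{N'}}M_\circ$ into $\varepsilon\,\gr_{r_{N'}}M_\circ$. Dividing the symbol by $\varepsilon$ and using torsion-freeness then gives that $\overline{\mathbf e}\,\overline{\mathbf f}$ is nilpotent.
\end{enumerate}
In general one peels off all the $\varepsilon_K$-coefficients inductively, using Frobenius-compatible powers $p^{sf}$.

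The support of $\gr\overline M$ is then cut out, for each residue embedding, by $h=0$, $ef=0$, $z=0$. For ramified $K$ it is cut out by $z=0$ together with all coefficients of $U(X)^2-XV(X)W(X)$. Either way this has dimension $\le e_K$ per residue embedding, hence $\le[K\colon\mathbf Q_p]$ in total. Without splitting the single Casimir equation into its $\varepsilon_K$-coefficients, no version of your argument gets below $2[K\colon\mathbf Q_p]$.

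Two secondary points. Even your Step 2 needs the $\varepsilon$-torsion-freeness above in order to transfer annihilation from $M_r$ to $\gr\overline M$. Also, for ramified $K$ the decomposition into embeddings $\sigma$ is not integral, so the symbols in the mod-$p$ graded ring are not simply $\mathrm{tr}_\sigma,\det_\sigma$.
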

	At least in the case of $\GL_2$ this is the optimal bound, as equality is achieved by principal series representations, as we show in Section \ref{princseries}.\\ Our approach is rather different from that of \cite{dospinescu2023gelfandkirillovdimensionpadicjacquetlanglands} and bears similarity with \cite{BHHMS1} in a sense we discuss below. We have no reason to believe that the desired bound does not hold for more general groups and we plan to address this question in a future work. Indeed, preliminary computations for $\GL_3$ already suggest that our methods should yield an optimal bound in that respective setting.
	\subsection*{Associated gradeds and ``Casimir ideals''}
	The bound of \cite{BHHMS1} requires intricate methods, among which a very careful study of several Galois deformation rings, but it ultimately follows from a certain associated graded of $\pi^\vee$ being killed by a certain ideal. Namely, if $k_L$ is the field of mod $p$ coefficients of $\pi$ say, then $\pi^\vee$ is a finitely generated $k_L\llbracket I_1\rrbracket$-module, where $I_1$ is the (upper-triangular) pro-$p$ Iwahori subgroup of $\GL_2\mathbf Q_{p^f}$; the ring $k_L\llbracket I_1\rrbracket$ is local with maximal ideal $\mathfrak m$ and one can take the associated graded $\gr\pi^\vee$ with respect to the $\mathfrak m$-adic filtration to obtain a $\gr k_L\llbracket I_1\rrbracket$-module. The Gelfand-Kirillov dimension is (roughly) the Krull dimension of $\gr\pi^\vee$ and \cite{BHHMS1} manage to bound it by showing this module is annihilated by a certain ideal of $\gr k_L\llbracket I_1\rrbracket$ that only depends on $\GL_2\mathbf Q_{p^f}$ (and not on $\overline\rho$ for instance).\\
	Our bound is obtained in a very similar way. We start by picking a suitable open subgroup $H$ of $\GL_2K$ (or $D^\times$) that does the job of $I_1$ before and that is equipped with a $p$-valuation coming from the pro-$p$ Iwahori. Then we consider a finitely generated sub-$\mathcal O_L\llbracket H\rrbracket$-module $M_\circ$ inside the dual $\Pi^*$, where $\Pi$ is as in our main theorem, such that $M_\circ[1/p]=\Pi^*$. There are several filtrations we consider on $M_\circ$ and $\Pi^*$, as in \cite{cite-key}, coming from the $p$-valuation on $H$ and a choice of ``radius of analyticity'', but they all become essentially the same on the reduction $\overline M$ of $M_\circ$ modulo a uniformizer $\varpi_L$ of $L$. The Gelfand-Kirillov dimension of $\Pi$ equals the Krull dimension of the $\gr k_L\llbracket H\rrbracket$-module $\gr\overline M$ and  we arrive at the bound by showing $\gr\overline M$ is killed by a power of a certain ideal of $\gr k_L\llbracket H\rrbracket$.\\
	Again, this ideal only depends on the group and not on any particular properties of $\Pi$. One might call it the ``Casimir ideal'', as its generators arise from elements of the center of the universal enveloping algebra of the Lie algebra of $H$ -- although perhaps not in the most straightforward manner. Specializing to $K=\mathbf Q_{p^f}$ and to $\GL_2$ it recovers the ideal of \cite{BHHMS1}. In this way, we first of all offer a more conceptual explanation of the ideal found in \cite{BHHMS1} that is not yet present in their work. Second of all, our work suggests a candidate for the ideal one might want to consider in order to generalize the work of \cite{BHHMS1} to $\GL_2K$, where $K$ now has ramification. As an example, we explicitly describe the ideal for $\GL_2\mathbf Q_p[\sqrt p]$ in Remark \ref{explideal}.\\
	Although we certainly hope that our method could be used in a more direct way towards generalizing the work of \cite{BHHMS1}, it is unclear to us if it will. Let us point out that even when each $\Pi_y$ (obtained as above from a patched module $\mathbb M_\infty$, but in the more general setting of $\GL_2K$) has an infinitesimal character by\cite[Thm.~9.27]{dospinescu2023gelfandkirillovdimensionpadicjacquetlanglands} and therefore Gelfand-Kirillov dimension at most $[K\colon\mathbf Q_p]$, it does not follow that the special fiber of the patched module itself has Gelfand-Kirillov dimension bounded by this degree: this would only be the case if $\mathbb M_\infty/\left(R_\infty\cap y\right)$ has no $p$-torsion, as we point out in Remark \hyperref[patchremarka]{\ref*{patchremark} a)}. This torsion-freeness would certainly follow from (faithful) flatness of $\mathbb M_\infty$, but this in fact only shows that the statement of faithful flatness is equivalent to that of the Gelfand-Kirillov dimension of the special fiber being bounded from above by $[K\colon\mathbf Q_p]$. Whether there is any $y\in\SpecMax R_\infty[1/p]$ such that $\mathbb M_\infty/\left(R_\infty\cap y\right)$ is $p$-torsion free is not clear to us.\\
	Finally, it is worth remarking that \cite{dospinescu2023gelfandkirillovdimensionpadicjacquetlanglands} actually prove that $\mathbb M_\infty$ has an ``infinitesimal character in families''. While our method does not immediately extend to this setting, we hope to adapt it to the patched setting in the future.
	\subsection{Overview}
	Let us indicate the structure of the paper and at the same time reveal to some extent the ideas that go into the proof of the main result.
	The first sections are rather general in nature, and hold for any $p$-adic Lie group $H$ with a $p$-valuation.\\
	
	In \S\ref{sectionFiltrations} we mostly recall the constructions around the distribution algebras $D_r(H,L)$ and their filtrations, where $r\in[1/p,1)$, as developed in \cite{cite-key}. These induce filtrations on finitely generated modules over $\mathcal O_L\llbracket H\rrbracket$, $L\llbracket H\rrbracket\coloneqq\mathcal O_L\llbracket H\rrbracket[1/p]$, $D_r(H,L)$, or $k_L\llbracket H\rrbracket$. We observe that the associated graded $\gr_r\overline M$ does not depend on $r$ when $\overline M$ is a $k_L\llbracket H\rrbracket$-module. We also show that, for $M_\circ$ a finitely generated $\mathcal O_L\llbracket H\rrbracket$-module with reduction to $k_L$ denoted $\overline M$, the module $\gr_rM_\circ/\varepsilon_L$ ($\varepsilon_L$ is a principal symbol of a uniformizer of $L$) equals $\gr\overline M$ as soon as $\gr_rM_\circ$ is $\varepsilon_L$-torsion free.\\
		At the same time, $\gr_rM_\circ[1/\varepsilon_L]=\gr_r\left(D_r(H,L)\otimes_{\mathcal O_L\llbracket H\rrbracket}M_\circ\right)$ always holds.\\
		
		In \S\ref{sectionLie} we discuss the basics of the Lie theory $p$-adic Lie groups and its consequences for $p$-adic Banach and locally analytic representations. We explain the notion of infinitesimal character and most importantly we show how the universal enveloping algebra can be seen inside the distribution algebras. Via Lazard's isomorphism this allows us to view $\gr_{1/p}\mathcal O_L\llbracket H\rrbracket$ as a universal enveloping algebra.\\
		
		 In \S\ref{ppowersect} we study the distribution algebras and filtrations for the groups $H^{p^N}$ of $p$-power powers of $H$. Namely, the sets of $p^N$-th powers $H^{p^N}$ are open subgroups of $H$ when the $p$-valuation on $H$ is saturated. We introduce a notion that we call ``strictly saturated'', which is just slightly stronger than that of a $p$-valuation being saturated, and under this condition the filtrations for the different groups $H^{p^N}$ are nicely related. For example, $\gr_{1/p}\mathcal O_L\llbracket H^{p^N}\rrbracket$ (constructed with respect to a naturally induced $p$-valuation on $H^{p^N}$) will be a subring of $\gr_{r_N}\mathcal O_L\llbracket H\rrbracket$, where $r_N\coloneqq p^{-1/p^N}$. Consequently, many elements of $\gr_{r_N}\mathcal O_L\llbracket H\rrbracket$ now also get Lie theoretic interpretations.\\
		
		 In \S\ref{torfreesect} we prove that $\gr_{r_N}M_\circ$ has no $\varepsilon_L$-torsion provided that $M_\circ$ is $p$-torsion free and $N$ is sufficiently large. This generalizes a result of \cite{cite-key} who did this when $M_\circ$ has a single generator and our proof is a straightforward generalization of theirs.\\
		It has as corollary that $\Pi^\text{$r$-la}\subset\Pi$ is dense for $r$ sufficiently close to $1$, for admissible $p$-adic Banach representations $\Pi$ of any $p$-adic Lie group. The significance to us, however, is that in combination with the observations made in \S\ref{sectionFiltrations} it puts us in the following situation, for $N$ big enough:
		\begin{equation}\label{maindiagram}
			\gr\overline M=\gr_{r_N}M_\circ/\varepsilon_L\twoheadleftarrow\gr_{r_N}M_\circ\hookrightarrow\gr_{r_N}M_\circ[1/\varepsilon_L]=\gr_{r_N}\left(D_{r_N}(H,L)\otimes_{\mathcal O_L\llbracket H\rrbracket}M_\circ\right).
		\end{equation}
		In this way information coming from the infinitesimal action, which can be seen on $D_{r_N}(H,L)\otimes_{\mathcal O_L\llbracket H\rrbracket}M_\circ$, can be transported all the way to $\gr\overline M$. The proof of the main theorem works in this way: the presence of an infinitesimal character provides us with elements killing the $\gr_{r_N}\left(D_{r_N}(H,L)\otimes_{\mathcal O_L\llbracket H\rrbracket}M_\circ\right)$, from which we obtain elements killing $\gr\overline M$. Collecting these for several $N$ will provide us the ``Casimir ideal''. A single $N$ is not enough and it is therefore rather crucial that $\gr_{r_N}\overline M$ is actually independent of $N$.\\
		
		It is in \S\ref{sectionAppli} that we specialize to $\GL_2$ or quaternion units, but only after a brief recap of the notion of Gelfand-Kirillov dimension.\\
		After this, we define our setup, which consists of choosing a suitable subgroup $H$ with a suitable (strictly saturated) $p$-valuation for example. Our argument then uses the diagram (\ref{maindiagram}) extensively -- although we do not present the diagram as such in the main text. As such, the argument could actually be carried out with respect to any $H^{p^N}$. In particular, one can take $H^{p^N}$ to be uniform. However, we want to stress (as in Remark \ref{notuniform}) that it is very important that one does not take a corresponding ``uniform'' $p$-valuation that sends every element of an ordered basis to $1$.\\ Namely, our choice of $p$-valuation has the crucial consequence of rendering the Casimir elements homogenous (in a certain sense) so that their principal symbols in the various $\gr_{r_N}\mathcal O_L\llbracket H\rrbracket$ become sufficiently nice: they are polynomial expressions in the variable $\varepsilon_K$ with coefficients in $\gr k_L\llbracket H\rrbracket$; these coefficients will generate the Casimir ideal, and the homogeneity ensures that enough of these coefficients are non-zero as to sufficiently bound the Gelfand-Kirillov dimension.
	\subsection{Acknowledgements}
	The author thanks his advisor Benjamin Schraen, for proposing to investigate what annihilation properties one can obtain from Casimir elements, as well as for many fruitful discussions. He also thanks Christophe Breuil, Stefano Morra, and Vytautas Paškūnas for answering questions.
	\subsection{Preliminaries (mostly about filtrations)}
	Throughout this paper, we consider many filtered rings. The filtrations we consider are always decreasing indexed by $\mathbf R$ and exhaustive.
	If $R$ is a filtered ring with filtration $\Fil^\bullet R$, then a \emph{filtered $R$-module}, is an $R$-module $M$ with a filtration $\Fil^\bullet R$ of $R$-submodules of $M$ such that their union is $M$ and such that \begin{equation*}
		\Fil^tR\cdot\Fil^sM\subset\Fil^{s+t}M
	\end{equation*} for all $s,t\in\mathbf R$. The definition of a filtered ring is that it is ring that is a filtered module over itself with $1\in\Fil^0R$.\\
	If $R$ is a filtered ring (we omit the filtration in the notation) and $S$ another filtered ring which is also an $R$-algebra, then we say $S$ is a \emph{filtered $R$-algebra} if it is a filtered $R$-module.\\
	A very important construction we can do on a filtered $R$-module $M$ is taking its \emph{associated graded module} $\gr M$:\begin{equation*}
		\gr M=\bigoplus_{s\in\mathbf R}\Fil^sM/\Fil^{>s}M,
	\end{equation*} where $\Fil^{>s}M\coloneqq\cup_{t>s}\Fil^tM$. It is naturally a graded $\gr R$-module. Morphisms of filtered $R$-modules, which have an obvious definition, induce morphisms of associated gradeds.\\ If $x\in M$, then we denote by $\sigma(x)\in\gr M$ its \emph{principal symbol} -- if it exists -- which is a non-zero homogenous element with degree equal to $\sup\left\{s\colon x\in\Fil^sM\right\}$ and equal to the image of $x$ in the corresponding graded part. We will often call this degree the ``valuation'' of $x$ (with respect to the filtration on $M$). We stress that the operation of taking principal symbols is not at all additive: $\sigma(x)+\sigma(y)=\sigma(x+y)$ if and only if $\sigma(x)+\sigma(y)$ is homogenous and non-zero. When deemed necessary, we specify the filtration in the notation for $\gr M$ and the principal symbols.\\
	If $A$ and $B$ are both filtered $R$-modules, with filtrations $\Fil_A^\bullet$ and $\Fil_B^\bullet$ respectively, then $A\oplus B$ is naturally a filtered $R$-module with filtration defined by \begin{equation*}
		\Fil_{A\oplus B}^s\coloneqq\Fil_A^s\oplus\Fil_B^s.
	\end{equation*}
	We call this filtration the \emph{direct sum filtration}.
	If $A$ is a filtered $R$-algebra, then $A\otimes_RB$ is a filtered $A$-module via \begin{equation*}
		\Fil^s_{A\otimes_RB}\coloneqq\left\{\sum_ia_i\otimes b_i\colon\text{for each $i$ there is an $s_i\in\mathbf R$ such that }a_i\in\Fil_A^{s-s_i},b\in\Fil_B^{s_i}\right\}.
	\end{equation*} We call this filtration the \emph{tensor filtration}.\\
	If $F$ is a $p$-adic (skew) field, then the $p$-adic valuation $v_p$, which we always index so that $v_p(p)=1$, makes $F$ and its ring of integers $\mathcal O_F$ into filtered $\mathbf Z_p$-algebras. Our filtered rings of interest will be filtered $\mathbf Z_p$-algebras.\\
	We denote by $\varepsilon\in\gr\mathbf Z_p$ the principal symbol $\sigma(p)$ of $p$. When $\varpi_F$ is a fixed uniformizer of $F$, we also use the notation $\varepsilon_F$ for its principal symbol, which is of degree $1/e_F$, the reciprocal of the ramification index of $F$ over $\mathbf Q_p$.\\
	We have $\gr\mathcal O_F=k_F[\varepsilon_F]$ and $\gr F=k_F[\varepsilon_F^{\pm1}]$, denoting by $k_F$ the residue field of $F$.\\ 
	Finally, we will consider the following two ways of inducing filtrations:
	\begin{definition}
		Let $A\overset\iota\to B\overset f\to C$ be morphisms of sets and suppose $B$ has a filtration $\Fil^\bullet$. Then we define the following filtrations on $A$ and $C$ respectively:\begin{itemize}
			\item $\iota^*\Fil^s\coloneqq\{x\in A\colon\iota(x)\in\Fil^s\}$,
			\item $f_*\Fil^s\coloneqq\{x\in C\colon\exists b\in\Fil^s, f(b)=x\}$.
		\end{itemize}
	\end{definition}
	Throughout this text $L$ is a $p$-adic field functioning as our field of coefficients. From Section \ref{setupappl} on it will be subject to the assumption of it being sufficiently large in a suitable sense.\\
	
	Finally, let us  introduce the last bit of notation, which is unrelated to filtrations. We write $\Gal_F$ for the absolute Galois group of a $p$-adic field and we always denote by $\Frob\in\Gal(\mathbf Q_p^\text{ur}/\mathbf Q_p)$ the absolute arithmetic Frobenius automorphism of the maximal unramified field extension $\mathbf Q_p^\text{ur}$ of $\mathbf Q_p$.
	\section{Rings and filtrations}\label{sectionFiltrations}
	\subsection{Brief reminder on Banach representations}
	Let $G$ be a $p$-adic Lie group. Let $\Pi$ be an $L$-Banach representation of $G$, that is: $\Pi$ is an $L$-Banach space admitting a linear and continuous action of $G$. An example of an $L$-Banach representation is $C(G,L)$, the set of continuous functions from $G$ to $L$ with the sup-norm, under the condition that $G$ is compact. \\The locally analytic vectors of $\Pi$ are those $v\in\Pi$ for which the orbit map \begin{equation*}
		\orb_v\colon G\to\Pi, g\mapsto gv
	\end{equation*} is locally analytic, which means that locally around each $g\in G$ one can find a chart $\varphi\colon B_\varepsilon(0)\to U\subset G$ mapping $0$ to $g$ such that $\orb_v\circ\varphi$ is given by a power series \begin{equation*}
		\sum_{\alpha\in\mathbf Z_{\geq0}^d}x_1^{\alpha_1}\cdot\cdots\cdot x_d^{\alpha_d}v_\alpha,\ \ \ \ \ \ \ \ (x_1,\dots,x_d)\in B_\varepsilon(0)\subset\mathbf Q_p^d
	\end{equation*} satisfying $\lim_{|\alpha|\to\infty}\varepsilon^{|\alpha|}\|v_\alpha\|=0$, where $|\alpha|\coloneqq\sum_i\alpha_i$.\\ The set $\Pi^\text{la}$ of all locally analytic vectors of $\Pi$ is a representation of $G$. One can derive the action of $G$ on $\Pi^\text{la}$, which leads to the infinitesimal action of the Lie algebra of $G$ on $\Pi^\text{la}$ and to the notion of infinitesimal character, which we detail more in Section \ref{sectionLie}. In our example, we have $C(G,L)^\text{la}=C^\text{la}(G,L)$, the latter being the set of locally analytic functions $G\to L$.\\
	
	We are mostly interested in \emph{admissible} $L$-Banach representations, that is, those $\Pi$ that embed in $C(H,L)^{\oplus N_0}$ as $L$-Banach representation of $H$, for some compact open subgroup $H$ of $G$ and some integer $N_0$. If $\Pi$ is admissible, one can find such integer $N_0$ for any open compact subgroup $H$. Admissible representations lend themselves well to methods of non-commutative algebra, because it turns out that the continuous linear dual \begin{equation}\label{dual}
		(-)^*\coloneqq\Hom_L^\text{cont}\left(-,L\right)
	\end{equation}
	induces an anti-equivalence from the category of admissible $L$-Banach representations of $G$ to the category of $L\llbracket G\rrbracket$-modules that are finitely generated over $L\llbracket H\rrbracket$ for some (equivalently any) compact open subgroup $H$ of $G$. Here $L\llbracket G\rrbracket$ is the so-called \emph{augmented} Iwasawa algebra or completed group ring $L\llbracket G\rrbracket\coloneqq L[G]\otimes_{L[H]}L\llbracket H\rrbracket$ ($H$ being any open compact subgroup of $G$ -- therefore profinite -- again), where $L\llbracket H\rrbracket\coloneqq\mathcal O_L\llbracket H\rrbracket[1/p]$.\\
	In general, with $G$ not necessarily compact, we give $C(G,L)$ the \emph{strong topology} or equivalently the \emph{topology of bounded convergence}, which means that it is a topological vector space with basis of open neighbourhoods around $0$ given by all subsets $\{f\in C(G,L)\colon f(B)\subset\varpi_L^n\mathcal O_L\}$, where $B\subset G$ is compact and $n\in\mathbf Z$. Then we can make sense of $C(G,L)^*$ and, giving $C^\text{la}(G,L)$ the subspace topology, also of $C^\text{la}(G,L)^*$. The first one we already know, since one can check that $L\llbracket G\rrbracket=C(G,L)^*$. The second is naturally an $L$-algebra known as the algebra of locally analytic distributions on $G$ and is written $D(G,L)\coloneqq C^\text{la}(G,L)^*$.\\
	The group $H$ can be naturally seen as (multiplicative) subset of the distribution algebras $D(H,L)$ and $L\llbracket H\rrbracket$, by identifying an element $h\in H$ with the Dirac distribution it defines. For purposes of clarity, for $h\in H$, we will always denote it between brackets when considered as element of $\mathcal O_KL\llbracket H\rrbracket$ (or one of the larger algebras): by $[h]\in\mathcal O_L\llbracket H\rrbracket$.\\
	
	When $\Pi$ is admissible, it turns out that naturally \begin{equation}
		\Pi^{\text{la},*}=D(G,L)\otimes_{L\llbracket G\rrbracket}\Pi^*,
	\end{equation} so that the dual of $\Pi^\text{la}$ is a finitely generated $D(G,L)$-module and even a finitely generated $D(H,L)$-module, for any compact open subgroup $H$ of $G$ (this is the notion of \emph{strongly admissible}). Moreover, $\Pi^\text{la}$ is dense in $\Pi$. See \cite[Thm.~7.1]{cite-key}.\\
	The continuous linear dual $\Pi^{\text{la},*}$ captures all the information of $\Pi^\text{la}$ via an anti-equivalence between the category of strongly admissible locally analytic $L$-representations of $G$ to the category of $D(G,L)$-modules that are finitely generated over some (equivalently any) $D(H,L)$, for $H$ a compact open subgroup of $G$, see \cite[Cor.~3.3]{ST02}.\\
	
	Our main object of study in this paper will be admissible $L$-Banach representations $\Pi$. For the above reasons we will however prefer to work with finitely generated $L\llbracket H\rrbracket$-modules or even finitely generated $\mathcal O_L\llbracket H\rrbracket$, for $H$ a compact open subgroup. In fact we like to specialize our situation even more, requiring that $H$ is a $p$-valued group, which allows us to put useful filtrations on the various rings constructed from $H$. We discuss this next.
	\subsection{$p$-valued groups and their associated filtered rings}\label{filtrationsdefsection}
	A \emph{$p$-valuation} on a group $H$ is a map $\omega\colon H\to(\frac1{p-1},\infty]$ satisfying
	\begin{itemize}
		\item$\omega(xy^{-1})\geq\min\{\omega(x),\omega(y)\}$,
		\item$\omega([x,y])\geq\omega(x)+\omega(y)$,
		\item$\omega(x)=\infty$ if and only if $x$ is trivial,
		\item$\omega(x^p)=\omega(x)+1$,
	\end{itemize} for all $x,y\in H$, where $[x,y]=xyx^{-1}y^{-1}$.\\
	A $p$-valuation naturally defines a topology on $H$ making it into a topological group. We call $\omega$ \emph{saturated} if $H$ is complete for the topology it defines and if $\omega(h)>p/(p-1)$ implies $h\in H^p\coloneqq\{x^p\colon x\in H\}$.\\
	We shall only work with $p$-valued $H$ that are also Lie groups. This remains rather general, as every $p$-adic Lie group contains an open subgroup that admits a $p$-valuation and one can even demand the $p$-valuation to be saturated and integer-valued, see \cite[Thm.~27.1]{Schneider2011}.\\ 
	In the case $H$ is also a $p$-adic Lie group, it will have an \emph{ordered basis} of elements $h_1,\dots, h_d\in H$, where $d$ is its dimension as a $p$-adic Lie group. This means that the map \begin{equation}\label{homeoordbas}
		\mathbf Z_p^d\to H, (a_1,\dots,a_d)\mapsto h_1^{a_1}h_2^{a_2}\cdots h^{a_d}, 
	\end{equation} is a homeomorphism and in facts gives a chart for $H$ as a $p$-adic manifold.\\
	Supposse for the remainder of this section that $H$ is a $p$-adic Lie group with a $p$-valuation $\omega$.\\
	
	 An ordered basis allows for explicit descriptions of the rings $\mathcal O_L\llbracket H\rrbracket$, $k_L\llbracket H\rrbracket$, $L\llbracket H\rrbracket$, and $D(H,L)$ in the following sense. We write $b_i\coloneqq [h_i]-1$ and $\mathbf b^\alpha\coloneqq b_1^{\alpha_1}\cdot\cdots\cdot b_d^{\alpha_d}$ for $\alpha=(\alpha_1,\dots,\alpha_d)\in\mathbf Z_{\geq0}^d$. Then, first of all, we have  \begin{equation*}
		\mathcal O_L\llbracket H\rrbracket=\left\{\sum_\alpha c_\alpha\mathbf b^\alpha\colon c_\alpha\in\mathcal O_L\right\}
	\end{equation*} and each element of the completed group ring is expressed as such a power series in a unique way. The elements of $L\llbracket H\rrbracket\coloneqq\mathcal O_L\llbracket H\rrbracket[1/p]$ can therefore also be expressed uniquely by power series as do the elements of $k_L\llbracket H\rrbracket$.\\
	The homeomorphism (\ref{homeoordbas}) is also very useful as the theory of Mahler expansions allows one to then write \begin{equation*}
		D(H,L)\coloneqq\left\{\sum_\alpha c_\alpha\mathbf b^\alpha\colon c_\alpha\in L, |c_{\alpha}|r^{|\alpha|}\to0\text{ as $|\alpha|\to\infty$, for all } r\in(0,1)\right\},
	\end{equation*} see \cite[\S4]{cite-key}.\\
	Moreover, the $p$-valuation $\omega$ gives us several norms on these algebras. Write $\tau\alpha\coloneqq\sum_i\alpha_i\omega(h_i)$. For $r\in[1/p,1)$ we define the norm $\|-\|_r$ on $D(H,L)$ given by \begin{equation*}
		\|\sum_\alpha c_\alpha\mathbf b^\alpha\|_r\coloneqq\sup_\alpha|c_\alpha|r^{\tau\alpha}.
	\end{equation*}
	We remark that this norm only depends on $r$ and $\omega$ and not on the choice of ordered basis, see the discussion after Theorem 4.10 of \cite{cite-key}.\\
	Completing with respect to this norm we obtain the ``radius $r$ distribution algebra'' \begin{equation*}
		D_r(H,L)=\left\{\sum_\alpha c_\alpha\mathbf b^\alpha\colon c_\alpha\in L, |c_{\alpha}|r^{\tau\alpha}\to0\text{ as }|\alpha|\to\infty\right\}.
	\end{equation*}
	This norm induces a filtration on $D_r(H,L)$ which we index additively in order to obtain a filtered $\mathbf Z_p$-algebra: \begin{equation}\label{filtronDr}
		\Fil_r^sD_r(H,L)=\left\{\sum_\alpha c_\alpha\mathbf b^\alpha\colon v_p(c_{\alpha})-\frac{\log r}{\log p}\cdot\tau\alpha\geq s\right\}.
	\end{equation}
	Via \begin{equation}\label{rings}
		k_L\llbracket H\rrbracket\twoheadleftarrow\mathcal O_L\llbracket H\rrbracket\subset L\llbracket H\rrbracket\subset D_r(H,L)
	\end{equation} we obtain unambiguous induced filtrations on all these rings $R$, which we denote by $\Fil_r^\bullet R$. We denote the associated graded rings by $\gr_rR$ and the principal symbols by $\sigma_r(-)$ in all cases except that of $k_L\llbracket H\rrbracket$ when we use $\overline\sigma_r(-)$.
	\begin{remark}
		Let us warn the reader that even while the notation is perhaps suggestive and one has that $\gr_rk_L\llbracket H\rrbracket=\gr_r\mathcal O_L\llbracket H\rrbracket/\varepsilon_L$ it is not the case that the reduction modulo $\varepsilon_L$ of $\sigma_r(x)$, where $x\in\mathcal O_L\llbracket H\rrbracket$, necessarily equals $\overline\sigma_r(\overline x)$.
	\end{remark}
	Finally, let us mention that if $x=\sum_\alpha c_\alpha\mathbf b^\alpha\in D_r(H,L)\setminus\{0\}$, then the degree of $\sigma_r(x)$ in $\gr_rD_r(H,L)$ is equal to $v_r(x)\coloneqq\inf_\alpha v_p(c_\alpha)-\frac{\log r}{\log p}\cdot\tau\alpha$, which is the additive ``$r$-valuation'' version of $\|-\|.$ 
	\subsection{Constructions from finite type $\mathcal O_L\llbracket H\rrbracket$-modules}\label{constrsect}
	Let $M_\circ$ be a finite type $\mathcal O_L\llbracket H\rrbracket$-module. For example, if $\Pi$ is an admissible $L$-Banach representation of $H$ and $\Pi_\circ$ is an $H$-invariant unit ball in $\Pi$, then one could take \begin{equation*}
		M_\circ\coloneqq\Hom_{\mathcal O_L}^\text{cont}\left(\Pi_\circ,\mathcal O_L\right)\subset\Hom_L^\text{cont}\left(\Pi,L\right)=M_\circ[1/p].
	\end{equation*}
	The aim of this section is to make the modules $M_\circ$ and $M\coloneqq M_\circ[1/p]$ into filtered $\mathcal O_L\llbracket H\rrbracket$- or $L\llbracket H\rrbracket$-modules, respectively, for the $\Fil_r^\bullet$-filtrations on these rings for any $r\in[1/p,1)$. Likewise, we will do so for the $M_r\coloneqq D_r(H,L)\otimes_{\mathcal O_L\llbracket H\rrbracket}M_\circ$ and $\overline M\coloneqq k_L\llbracket H\rrbracket\otimes_{\mathcal O_L\llbracket H\rrbracket}M_\circ$.\\
	We keep the above notation for the modules obtained from $M_\circ$ throughout this text.\\
	
	Fix a surjection \begin{equation}\label{presentation}
		\mathcal O_L\llbracket H\rrbracket^{\oplus N_0}\twoheadrightarrow M_\circ. 
	\end{equation}
	For each $r\in[1/p,1)$ we have the filtration $\Fil_r^\bullet$ on $\mathcal O_L\llbracket H\rrbracket$ and therefore also on the direct sum. Via the surjection we obtain the induced filtration $\Fil_r^\bullet M_\circ$, whose notation abusively does not take the surjection into account.\\
	After tensoring (\ref{presentation}) by one of the $\mathcal O_L\llbracket H\rrbracket$-algebras $R$ of (\ref{rings}) we likewise get surjections $R^{\oplus N_0}\twoheadrightarrow R\otimes_{\mathcal O_L\llbracket H\rrbracket}M_\circ$ that induce filtrations $\Fil_r^\bullet\left(R\otimes_{\mathcal O_L\llbracket H\rrbracket}M_\circ\right)$, using the filtration $\Fil_r^\bullet R$ on $R$. We denote the associated graded modules (over the associated graded rings) by $\gr_rR\otimes_{\mathcal O_L\llbracket H\rrbracket}M_\circ$ and we also extend our notation for principal symbols above to this setting.\\
	
	The filtration $\Fil_r^\bullet$ defines a topology on $M$, and we denote by $M^{\wedge_r}$ its completion, which is a $D_r(H,L)$-module. By completing the $\Fil^s_rM$ as well, one obtains a filtration $\Fil_r^sM^{\wedge_r}$ that turns $M^{\wedge_r}$ into a filtered $D_r(H,L)$-module.
	\begin{lemma}
		The completion $M^{\wedge_r}$ is canonically isomorphic to $M_r$ as filtered $D_r(H,L)$-module.
	\end{lemma}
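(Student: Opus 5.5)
The natural map is $\iota\colon M\to M_r$, $m\mapsto 1\otimes m$, where $M_r=D_r(H,L)\otimes_{\mathcal O_L\llbracket H\rrbracket}M_\circ=D_r(H,L)\otimes_{L\llbracket H\rrbracket}M$. The plan is to show that $M_r$ is complete for $\Fil_r^\bullet$, that $\iota$ is strict for the filtrations, and that $\iota$ has dense image. These three facts together give a canonical filtered isomorphism $M^{\wedge_r}\cong M_r$.

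Everything comes from the fixed presentation (\ref{presentation}). Tensoring it gives a commutative square: the surjections $L\llbracket H\rrbracket^{\oplus N_0}\twoheadrightarrow M$ and $D_r(H,L)^{\oplus N_0}\twoheadrightarrow M_r$, connected by the inclusion $L\llbracket H\rrbracket^{\oplus N_0}\subset D_r(H,L)^{\oplus N_0}$ and by $\iota$. On both modules the filtrations are pushforwards of the direct sum filtrations.

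\textbf{Step 1 (density and compatibility).} Since $L\llbracket H\rrbracket$ is dense in $D_r(H,L)$, with $\Fil_r^s L\llbracket H\rrbracket$ dense in $\Fil_r^sD_r(H,L)$ (truncate power series), the image of $\Fil_r^sM$ is dense in $\Fil_r^sM_r$. Also $\iota(\Fil_r^sM)\subset\Fil_r^sM_r$ by commutativity of the square.

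\textbf{Step 2 (completeness of $M_r$).} Write $K_r$ for the kernel of $D_r(H,L)^{\oplus N_0}\to M_r$. It is a $D_r(H,L)$-submodule. Since $D_r(H,L)$ is Noetherian for $r\in[1/p,1)$ by \cite{cite-key}, $K_r$ is finitely generated, and hence closed by the standard fact that finitely generated submodules of finite free modules over a Noetherian Banach algebra are closed. Then $M_r$ with the quotient filtration is the quotient of a complete filtered module by a closed submodule, so it is complete and separated.

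\textbf{Step 3 (strictness, the main obstacle).} We need $\iota^{-1}(\Fil_r^sM_r)=\Fil_r^sM$; this also gives injectivity of $\iota$ via separatedness. Concretely, suppose $x\in L\llbracket H\rrbracket^{\oplus N_0}$ and $x=y+k$ with $y\in\Fil_r^sD_r(H,L)^{\oplus N_0}$ and $k\in K_r$. We must rewrite $x=y'+k'$ with $y'$ filtered-$s$ and $k'$ in the kernel $K$ of the map on $L\llbracket H\rrbracket^{\oplus N_0}$. The key input is that $K_r$ is generated by $K$: by flatness of $D_r(H,L)$ over $L\llbracket H\rrbracket$ (as in \cite{cite-key}), $K_r=D_r(H,L)\otimes K$. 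Take finitely many generators $k_1,\dots,k_m$ of $K$ and write $k=\sum d_ik_i$ with $d_i\in D_r(H,L)$. Approximate each $d_i$ by $e_i\in L\llbracket H\rrbracket$ closely enough that $\sum(d_i-e_i)k_i\in\Fil_r^s$. Then
\[
x=\Bigl(y+\sum_i(d_i-e_i)k_i\Bigr)+\sum_ie_ik_i .
\]
Here the first bracket equals $x-\sum_i e_ik_i$, so it lies in $L\llbracket H\rrbracket^{\oplus N_0}\cap\Fil_r^s=\Fil_r^sL\llbracket H\rrbracket^{\oplus N_0}$. This gives strictness, and hence the identification of the completed filtration $\Fil_r^sM^{\wedge_r}$ with $\Fil_r^sM_r$, since each piece is then the closure of the image of $\Fil_r^sM$. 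Canonicity holds because $\iota$ is defined independently of choices. The delicate points are the flatness and closedness inputs from \cite{cite-key}, which I expect to be the real obstacle if they are not quotable directly.
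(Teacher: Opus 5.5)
Your route is correct, and it is genuinely different from the paper's. The paper works only with universal properties. It extends $M\to M_r$ by continuity to $f\colon M^{\wedge_r}\to M_r$. It obtains $g\colon M_r\to M^{\wedge_r}$ from the universal property of $D_r(H,L)\otimes_{L\llbracket H\rrbracket}M$ together with the $D_r(H,L)$-action on the completion. Both composites are the identity on the image of $M$, which generates both modules over $D_r(H,L)$, so $f$ and $g$ are inverse; their filteredness is automatic, and no strictness computation is needed. You instead prove density and strictness of $\iota$ by hand and identify $M/\Fil_r^sM\cong M_r/\Fil_r^sM_r$. This costs an approximation argument, but it buys two things. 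Your Step 3 is exactly Lemma \ref{commutation} i), which the paper only deduces from this lemma. Your Step 2 isolates the one non-formal input. With your $K$ and $K_r$, and $\overline K$ the closure of $K$ in $D_r(H,L)^{\oplus N_0}$, one has $M^{\wedge_r}=D_r(H,L)^{\oplus N_0}/\overline K$ and $K\subset K_r\subset\overline K$. Hence the lemma is equivalent to $K_r$ being closed. The paper needs this too but hides it in ``$M_r$ is complete'': completeness of $D_r(H,L)$ alone only makes the quotient $M_r$ complete, not separated.

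Three corrections.

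First, flatness is superfluous, and the paper stresses that it is not available in general. Right exactness of $D_r(H,L)\otimes_{L\llbracket H\rrbracket}-$ already shows that $K_r$ is the image of $D_r(H,L)\otimes K$, i.e.\ $K_r=D_r(H,L)\cdot K$, and that is all Step 3 uses.

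Second, \cite{cite-key} does not prove Noetherianity of $D_r(H,L)$ for all $r\in[1/p,1)$ and all $p$-valuations; their results assume (HYP) and $r\in p^{\mathbf Q}$. In this paper it is available for rational $\omega$ at $r=1/p$, and for strictly saturated $\omega$ at $r=r_N$ via (\ref{distralgdecomp}). That covers every later use.

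Third, strictness does not yield injectivity of $\iota$. It only gives $\ker\iota=\bigcap_s\Fil_r^sM$, and $M$ need not be separated. For example, take $p>2$ and $H=\mathbf Z_p$ with generator $h$ and $\omega(h^a)=v_p(a)+1$. Take $\varpi_L^2=p$ and $M_\circ=\mathcal O_L\llbracket H\rrbracket/([h]-1-\varpi_L)\cong\mathcal O_L$. Since $\|[h]-1\|_{1/p}=p^{-1}<|\varpi_L|$, the element $[h]-1-\varpi_L$ is a unit in $D_{1/p}(H,L)$, so $M_{1/p}=0\neq M$. This is why the paper needs Corollary \ref{injectionsforNsuffbig}. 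Your isomorphism argument never uses injectivity, so this only invalidates a side remark.
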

	\begin{proof}
		The natural map $M\to M_r$ is continuous for the topologies on both modules induced by their respective filtrations. Therefore it induces a morphism $f\colon M^{\wedge_r}\to M_r$, because $M_r$ is complete for its topology, which is a consequence of the completeness of $D_r(H,L)$. By the universal property of the tensor product we also have a morphism $g\colon M_r=D_r(H,L)\otimes_{L\llbracket H\rrbracket}M\to M^{\wedge_r}$, again induced by $M\to M_r$ and the $D_r(H,L)$-action on the completion. Both $M_r$ and $M^{\wedge_r}$ admit natural morphisms from $M$ and are generated over $D_r(H,L)$ by the image of $M$. The compositions $f\circ g$ and $g\circ f$ are the identity on the respective image of $M$ and are therefore equal to the identity of $M_r$ and $M^{\wedge_r}$, respectively.\\
		Finally, $f$ and $g$ respect the filtrations, because the natural maps $M\to M_r$ and $M\to M^{\wedge_r}$ are morphisms of filtered $L\llbracket H\rrbracket$-modules.
	\end{proof}
	The following lemma shows when different candidates for filtrations on our modules coincide.
	\begin{lemma}\label{commutation}
		\begin{enumerate}[label*=\roman*)]
			\item The two possible ways of inducing a filtration on $M$ from $\Fil_r^\bullet D_r(H,L)^{\oplus N_0}$ and via the diagram \begin{equation*}
				\begin{tikzcd}
					L\llbracket H\rrbracket^{\oplus N_0} \arrow[d, two heads, "f"] \arrow[r, hook, "\iota"] & {D_r(H,L)}^{\oplus N_0} \arrow[d, two heads, "f'"] \\
					M \arrow[r, "\iota'"]                                                        & M_r                            
				\end{tikzcd}
			\end{equation*} agree. That is, $f_*\iota^*\Fil_r^\bullet D_r(H,L)^{\oplus N_0}=\iota'^*f'_*\Fil_r^\bullet D_r(H,L)^{\oplus N_0}$.\\ Consequently, we have a canonical isomorphism $\gr_rM\cong\gr_rM_r$ of $\gr_rL\llbracket H\rrbracket$-modules.
			\item If $\gr_rM_\circ$ is $\varepsilon$-torsion free, then the same holds for \begin{equation*}
				\begin{tikzcd}
					\mathcal O_L\llbracket H\rrbracket^{\oplus N_0} \arrow[d, two heads, "f_\circ"] \arrow[r, hook, "\iota_\circ"] & {D_r(H,L)}^{\oplus N_0} \arrow[d, two heads, "f'"] \\
					M_\circ \arrow[r,"\iota_\circ'"]                                                        & M_r                            
				\end{tikzcd}.
			\end{equation*} That is, $f_{\circ,*}\iota_\circ^*\Fil_r^\bullet D_r(H,L)^{\oplus N_0}=\iota_\circ'^*f'_*\Fil_r^\bullet D_r(H,L)^{\oplus N_0}$.
		\end{enumerate}
	\end{lemma}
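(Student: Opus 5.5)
The plan is to prove i) by an approximation argument that uses only right exactness of $D_r(H,L)\otimes_{L\llbracket H\rrbracket}-$ and the density of $L\llbracket H\rrbracket$ in $D_r(H,L)$. Then ii) is reduced to i) by showing that the torsion-freeness hypothesis makes the filtration on $M_\circ$ the restriction of the one on $M$.

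\emph{Part i).} Since the square commutes, $f_*\iota^*\Fil_r^s\subset\iota'^*f'_*\Fil_r^s$ is immediate. For the reverse inclusion, let $m\in M$ with $\iota'(m)=f'(y)$ for some $y\in\Fil_r^sD_r(H,L)^{\oplus N_0}$, and choose any $x_0\in L\llbracket H\rrbracket^{\oplus N_0}$ with $f(x_0)=m$. Since $L\llbracket H\rrbracket$ is noetherian, $\ker f$ is generated by finitely many $k_1,\dots,k_m$. By right exactness of the tensor product, $\ker f'$ is the image of $D_r(H,L)\otimes\ker f$, i.e.\ $\ker f'=\sum_jD_r(H,L)k_j$; no flatness is needed for this. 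Hence $y-x_0=\sum_jd_jk_j$ with $d_j\in D_r(H,L)$.
\begin{itemize}
\item By density of $L\llbracket H\rrbracket$ in $D_r(H,L)$ (truncated power series), choose $e_j\in L\llbracket H\rrbracket$ with $v_r(d_j-e_j)\ge s-\min_jv_r(k_j)$.
\item Put $x=x_0+\sum_je_jk_j\in L\llbracket H\rrbracket^{\oplus N_0}$. Then $f(x)=m$.
\item Moreover $x=y-\sum_j(d_j-e_j)k_j$ with both terms in $\Fil_r^s$, so $x\in\iota^*\Fil^s_r$.
\end{itemize}
For the isomorphism $\gr_rM\cong\gr_rM_r$: the equality of filtrations just proved makes $\gr_rM\to\gr_rM_r$ injective, since the filtration on $M$ is pulled back from $M_r$. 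For surjectivity, take $z=f'(y)$ with $y\in\Fil^s_r$. Approximate $y$ by $x\in L\llbracket H\rrbracket^{\oplus N_0}$ with $y-x\in\Fil_r^{>s}$ (again by density); then $\iota'f(x)$ and $z$ have the same image in $\Fil^s_rM_r/\Fil^{>s}_rM_r$.

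\emph{Part ii).} By i), it suffices to show that $\Fil_r^\bullet M_\circ$ equals the restriction to $M_\circ$ of $\Fil_r^\bullet M=f_*\iota^*\Fil_r^\bullet$, where $M_\circ\to M$ is the natural map. Because $\Fil^s_rL\llbracket H\rrbracket=\bigcup_np^{-n}\Fil_r^{s+n}\mathcal O_L\llbracket H\rrbracket$, we have
\begin{equation*}
\Fil_r^sM=\bigcup_np^{-n}\,\mathrm{im}\bigl(\Fil_r^{s+n}M_\circ\bigr).
\end{equation*}
So let $m\in M_\circ$ be such that $p^nm\in\Fil_r^{s+n}M_\circ$ for some $n$ (this also covers $p$-torsion $m$). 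Suppose $m\notin\Fil^s_rM_\circ$. Then $m$ has a principal symbol $\sigma_r(m)$ of degree $t<s$. This requires the sup defining the degree to be attained, which holds for two reasons:
\begin{itemize}
\item each $\Fil_r^tM_\circ$ is closed, being the image of a compact set;
\item the possible degrees form a discrete subset of $\mathbf R$, namely values $v_p(c)-\frac{\log r}{\log p}\tau\alpha$ with $\tau\alpha\ge0$.
\end{itemize}
Now $p^nm$ has degree $\ge s+n>t+n$, so $\varepsilon^n\sigma_r(m)=0$ in $\gr_rM_\circ$. This contradicts $\varepsilon$-torsion freeness. Hence $m\in\Fil^s_rM_\circ$. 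As a by-product, $M_\circ$ is $p$-torsion free.

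The main obstacle I expect is this last step: making the degree argument rigorous, i.e.\ checking that principal symbols exist, via closedness of the filtration steps and discreteness of the value set.
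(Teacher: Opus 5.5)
Your proof is correct. It differs from the paper's only in part i); part ii) is essentially the paper's argument.

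\emph{Part i).} The paper deduces it from the preceding lemma, which identifies $M_r$ with the completion $M^{\wedge_r}$ \emph{as filtered modules}. Then $\iota'^*f'_*\Fil_r^\bullet$ is the pull-back to $M$ of the closures of the $\Fil_r^sM$. This pull-back is $\Fil_r^sM$ again, because these steps are open. Density of $M$ in $M_r$ then gives $\gr_rM\cong\gr_rM_r$.

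You avoid completions altogether. Right exactness gives $\ker f'=D_r(H,L)\cdot\ker f$. You then correct an arbitrary lift $x_0$ of $m$ by $\sum_je_jk_j$, with $e_j\in L\llbracket H\rrbracket$ approximating the $d_j$.

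Your route is more elementary and self-contained. It does not need to know that $M_r$ is complete for its filtration topology, or that the comparison maps with $M^{\wedge_r}$ are strict. Nor does it need Noetherianity, since any element of $\ker f'$ is already a finite sum $\sum_jd_jk_j$ with $k_j\in\ker f$. In fact your approximation is exactly what justifies the claim, in the preceding lemma, that $g\colon M_r\to M^{\wedge_r}$ respects filtrations; the paper passes over that point quickly. In exchange, the paper's route gives the conceptual picture of $M_r$ as the $r$-completion of $M$.

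\emph{Part ii).} Both you and the paper combine $\Fil_r^sM=\bigcup_np^{-n}\Fil_r^{s+n}M_\circ$ with the consequence of $\varepsilon$-torsion freeness that multiplication by $p$ (the paper uses $\varpi_L$) is strict. You are slightly more careful than the paper, which draws $M_\circ\hookrightarrow M$ without comment, whereas your argument actually yields $p$-torsion freeness.

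Two small points:
\begin{itemize}
\item From $j(m)\in\Fil_r^sM$ you first only get $p^nm-m'\in M_\circ[p^\infty]$ with $m'\in\Fil_r^{s+n}M_\circ$. You must multiply by a further power of $p$ before assuming $p^nm\in\Fil_r^{s+n}M_\circ$. This is one line, but it should be written.
\item Closedness of the $\Fil_r^tM_\circ$ is not needed. The filtration is a step function jumping only on a discrete set, and $m\notin\Fil_r^sM_\circ$ already bounds the supremum, so it is attained.
\end{itemize}
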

	\begin{proof}
		For i) we need to show that the filtration $\Fil_r^\bullet M=f_*\iota^*\Fil_r^\bullet D_r(H,L)^{\oplus N_0}$ is equal to $\iota'^*f'_*\Fil_r^\bullet D_r(H,L)^{\oplus N_0}$. By the preceding lemma, this latter filtration, is actually the filtration on $M$ induced by $M\to M^{\wedge_r}$ and $\Fil_r^\bullet M^{\wedge_r}$, which is indeed $\Fil^\bullet_rM$. The isomorphism $\gr_rM\cong\gr_rM_r$ is then a consequence of the image of $M$ being dense in $M_r$.\\
		
		Now we prove ii), so we assume the $\varepsilon$-torsion freeness. By i) we may equivalently proof the result for the diagram \begin{equation*}
			\begin{tikzcd}
				\mathcal O_L\llbracket H\rrbracket^{\oplus N_0} \arrow[d, two heads] \arrow[r, hook] & {L\llbracket H\rrbracket}^{\oplus N_0} \arrow[d, two heads] \\
				M_\circ \arrow[r, hook]                                                        & M                            
			\end{tikzcd}.
		\end{equation*}The $\varepsilon$-torsion freeness means that for all $s\in\mathbf R$ and all $i\in\mathbf Z_{\geq1}$\begin{equation*}
			\Fil_r^sM_\circ\cap \varpi_L^iM_\circ=\varpi_L\Fil^{s-i/e_L}_rM_\circ.
		\end{equation*} At the same time we have for each $s$\begin{equation*}
			\Fil_r^sM=\bigcup_{i\in\mathbf Z}\varpi_L^i\Fil_r^{s-i/e_L}M_\circ,
		\end{equation*} because the filtration on $L\llbracket H\rrbracket^{\oplus N_0}\cong L\otimes_{\mathcal O_L}\mathcal O_L\llbracket H\rrbracket^{\oplus N_0}$ is the tensor filtration.\\
		Combining the two equalities gives that \begin{equation*}
			M_\circ\cap\Fil_r^sM=\Fil_r^sM_\circ
		\end{equation*} from which we conclude.
	\end{proof}
	We also have the following:
	\begin{lemma}\label{localizationtoeps}
		The natural map $\gr_rM_\circ[1/\varepsilon]\to\gr_rM=\gr_rM_r$ is an isomorphism.
	\end{lemma}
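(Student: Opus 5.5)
We must show that $\gr_rM_\circ\otimes_{\gr\mathbf Z_p}\gr\mathbf Z_p[\varepsilon^{-1}]\to\gr_rM$ is an isomorphism. The map is induced by the filtered inclusion $M_\circ\to M$, where the filtration on $M$ is the one induced from $L\llbracket H\rrbracket^{\oplus N_0}$. The identification $\gr_rM=\gr_rM_r$ is Lemma \ref{commutation} i), so only the first map needs work. Since $\varepsilon$ is invertible in $\gr_rL\llbracket H\rrbracket$ (it is the symbol of $p$, a unit in $L$), the map extends to $\gr_rM_\circ[1/\varepsilon]$.

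For the filtration on $M$, the proof of Lemma \ref{commutation} ii) shows, with no torsion hypothesis, that $\Fil_r^sM=\bigcup_{i\in\mathbf Z}\varpi_L^i\Fil_r^{s-i/e_L}M_\circ$. This holds because the filtration on $L\llbracket H\rrbracket^{\oplus N_0}$ is the tensor filtration $\bigcup_i\varpi_L^i\Fil_r^{s-i/e_L}\mathcal O_L\llbracket H\rrbracket^{\oplus N_0}$, and taking images under a surjection commutes with unions. Equivalently, $\bigcup_n p^{-n}\Fil_r^{s+n}M_\circ$ gives the same filtration, using $p=\varpi_L^{e_L}\cdot$unit.

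Surjectivity is immediate. A homogeneous element of degree $s$ in $\gr_rM$ is represented by some $x\in\Fil_r^sM$. Then $p^nx\in\Fil_r^{s+n}M_\circ$ for some $n$, and $\varepsilon^{-n}$ times the class of $p^nx$ maps to the class of $x$.

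Injectivity is the real point. It suffices to treat homogeneous elements, and after multiplying by a power of $\varepsilon$ we may take the element to be the class of some $y\in\Fil_r^sM_\circ$ whose image in $\Fil_r^sM/\Fil_r^{>s}M$ vanishes. Then $y\in\Fil_r^tM$ for some $t>s$. By the description above, $p^ny\in\Fil_r^{t+n}M_\circ$ for some $n\geq0$, and replacing $t$ by something closer to $s$ we may assume $t+n>s+n$ still. But $p^ny\in\Fil_r^{t+n}M_\circ$ says exactly that $\varepsilon^n\sigma(y)=0$ in $\gr_rM_\circ$, so the class of $y$ dies in $\gr_rM_\circ[1/\varepsilon]$.

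One subtlety is that $M_\circ\to M$ need not be injective if $M_\circ$ has $p$-torsion. This is harmless: elements of $M_\circ$ that become zero in $M$ are killed by a power of $p$, hence lie in $\Fil^{\infty}$ after multiplication by a power of $p$, and the same argument covers them.

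The main obstacle is making sure the union description of $\Fil_r^sM$ genuinely uses the image filtration on $M_\circ$ and not an intrinsic one. This reduces to checking that the tensor filtration on $L\otimes\mathcal O_L\llbracket H\rrbracket$ agrees with the restriction of $\Fil_r^\bullet$ from $D_r(H,L)$, which is clear from the explicit formula (\ref{filtronDr}) in terms of coefficients.
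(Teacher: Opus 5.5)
Your proposal is correct and follows the paper's proof. Both directions rest on the fact that $\Fil_r^\bullet M$ is the tensor filtration $\bigcup_i\varpi_L^i\Fil_r^{s-i/e_L}M_\circ$, so a suitable power of $p$ moves any element of $\Fil_r^sM$ into $\Fil_r^{s+n}M_\circ$, and moves any element of $M_\circ$ that lies in a strictly higher step of $M$ into a strictly higher step of $M_\circ$. Your explicit treatment of possible $p$-torsion in $M_\circ$, which the paper passes over silently, and your use of classes in place of principal symbols are harmless refinements.
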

	\begin{proof}
		First of all, this map is a surjection. To see this, take any non-zero $x\in\gr_rM$, which we may suppose homogenous of some degree $s\in\mathbf R$ without loss of generality. It is the principal symbol $\sigma_r(\tilde x)$ of some element $\tilde x$ in $M$ which we can write as $\frac1{p^n}\tilde x'$ with $\tilde x'$ in $\Fil_r^{s+n}M_\circ$ (because even if the filtration on $M$ is not defined that way, it coincides with the tensor filtration of $L\otimes_{\mathcal O_L}M_\circ$). The principal symbol $\sigma_r(\tilde x')\in\gr_rM_\circ$ exists and is in degree $n+s$, because if it were in higher degree, the degree of $\sigma_r(\tilde x)$ would necessarily be higher than $s$ which is a contradiction. We see that $\varepsilon^n\sigma_r(\tilde x')=x$ and we obtain the surjectivity.\\
		For injectivity take any non-zero $y\in\gr_rM_\circ[1/\varepsilon]$ that maps to zero in $\gr_rM$, homogenous of degree $s\in\mathbf R$ without loss of generality, and write it as $\sigma_r(\tilde y')/\varepsilon^n$. Since it maps to zero, apparently $\tilde y'\in\Fil_r^{>s+n}M$, so that there are $k\in\mathbf Z_{\geq1}$ and $\tilde z\in\Fil_r^{>s+n+k}M_\circ$ such that $\tilde y'=\tilde z/p^k$, using again that we are dealing with the tensor filtration $L\otimes_{\mathcal O_L}M_\circ$. As a result $\varepsilon^k\sigma_r(\tilde y')=0$ in $\gr_rM_\circ$ so that $y\in\gr_rM_\circ[1/\varepsilon]$ was actually zero after all, proving the injectivity by contradiction.
	\end{proof}
	\subsection{There can be no confusion about $\gr\overline M$}\label{modpsect}
	Keep the fixed presentation $\mathcal O_L\llbracket H\rrbracket^{\oplus N_0}\twoheadrightarrow M_\circ$. Then, as discussed above, each $\Fil_r^\bullet M_\circ$ induces a filtered module $\Fil_r^\bullet\overline M$ over the filtered ring $\Fil_r^\bullet k_L\llbracket H\rrbracket$ (the same as the one induced by $\Fil_r^\bullet k_L\llbracket H\rrbracket^{\oplus N_0}$ via $k_L\llbracket H\rrbracket^{\oplus N_0}\twoheadrightarrow\overline M$). Here we show that these are all essentially the same, that is, independent of the choice of radius $r$, up to rescaling the filtration indices.
	\begin{proposition}
		Let $r,r'\in[1/p,1)$. Then, for all $s\in\mathbf R$, $$\Fil_r^s\overline M=\Fil_{r'}^{\frac{\log r'}{\log r}\cdot s}\overline M.$$
	\end{proposition}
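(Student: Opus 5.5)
The plan is to reduce everything to the free module $k_L\llbracket H\rrbracket^{\oplus N_0}$ and then pass to the quotient. By construction, $\Fil_r^\bullet\overline M$ is the pushforward of the direct sum filtration $\Fil_r^\bullet k_L\llbracket H\rrbracket^{\oplus N_0}$ along the surjection $k_L\llbracket H\rrbracket^{\oplus N_0}\twoheadrightarrow\overline M$. The paper notes that this agrees with the filtration obtained by first pushing to $M_\circ$ and then to $\overline M$. Pushforward along a fixed map commutes with any reindexing $s\mapsto\lambda s$. So it suffices to prove the identity
\begin{equation*}
\Fil_r^s k_L\llbracket H\rrbracket=\Fil_{r'}^{\frac{\log r'}{\log r}s}k_L\llbracket H\rrbracket
\end{equation*}
for every $s$. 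The case of $N_0$ summands then follows immediately, since the direct sum filtration is defined componentwise.

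To compute $\Fil_r^\bullet k_L\llbracket H\rrbracket$, fix an ordered basis $h_1,\dots,h_d$ and write $c(r)\coloneqq-\frac{\log r}{\log p}\in(0,1]$. The filtration on $k_L\llbracket H\rrbracket$ is the pushforward of $\Fil_r^\bullet\mathcal O_L\llbracket H\rrbracket$, where an element $\sum_\alpha c_\alpha\mathbf b^\alpha$ lies in $\Fil_r^s$ if and only if $v_p(c_\alpha)+c(r)\tau\alpha\geq s$ for all $\alpha$. The key claim is that
\begin{equation*}
\Fil_r^sk_L\llbracket H\rrbracket=\Big\{\sum_\alpha\bar c_\alpha\mathbf b^\alpha\colon \bar c_\alpha\in k_L,\ \bar c_\alpha=0\text{ whenever }c(r)\tau\alpha<s\Big\}.
\end{equation*}

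The inclusion $\supseteq$ is clear: lift each $\bar c_\alpha$ to a Teichmüller-type lift $c_\alpha\in\mathcal O_L$ with $v_p(c_\alpha)=0$ whenever $\bar c_\alpha\neq0$. The resulting lift then satisfies $v_p(c_\alpha)+c(r)\tau\alpha\geq s$ term by term. This uses the fact that elements of $\mathcal O_L\llbracket H\rrbracket$ are exactly arbitrary power series in the $\mathbf b^\alpha$ with $\mathcal O_L$-coefficients.

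For $\subseteq$, take a lift $x=\sum c_\alpha\mathbf b^\alpha\in\Fil_r^s\mathcal O_L\llbracket H\rrbracket$. Its reduction is $\sum\bar c_\alpha\mathbf b^\alpha$, because reduction mod $\varpi_L$ acts coefficientwise on the unique power series expansion. Now $\bar c_\alpha\neq0$ forces $v_p(c_\alpha)=0$, hence $c(r)\tau\alpha\geq s$.

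This description depends on $r$ only through the condition $c(r)\tau\alpha\geq s$. Since $c(r')/c(r)=\frac{\log r'}{\log r}$, we have $c(r)\tau\alpha\geq s$ if and only if $c(r')\tau\alpha\geq\frac{\log r'}{\log r}s$, which gives the claimed identity.

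The only potentially delicate point is the $\subseteq$ direction. One has to know that the filtration on $k_L\llbracket H\rrbracket$ really is the quotient filtration. One also has to check that no cancellation between lifts can make a reduction land in a deeper filtration step than the coefficientwise description predicts. Both follow from uniqueness of the power series expansion in the $\mathbf b^\alpha$, together with the fact that the norm $\|-\|_r$ is computed coefficientwise. A further subtlety is independence of the ordered basis. This is harmless here: the paper already records that $\|-\|_r$, and hence $\Fil_r^\bullet$, is independent of the basis, so one fixed basis can be used for both $r$ and $r'$.
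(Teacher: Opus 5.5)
Your proposal is correct and follows essentially the same route as the paper: reduce via the fixed presentation to $k_L\llbracket H\rrbracket$, describe $\Fil_r^s k_L\llbracket H\rrbracket$ coefficientwise as consisting of the series supported on those $\mathbf b^\alpha$ with $-\frac{\log r}{\log p}\tau\alpha\geq s$, and then rescale. The only difference is that you spell out the two inclusions, and the quotient-filtration bookkeeping, that the paper compresses into ``easily deduced from'' the explicit description of $\Fil_r^\bullet D_r(H,L)$.
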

	\begin{proof}
		It suffices to prove the statement for $\overline M=k_L\llbracket H\rrbracket$, because the filtrations on $\overline M$ are all induced by the same presentation $k_L\llbracket H\rrbracket^{\oplus N_0}\twoheadrightarrow\overline M$.\\ 
		There, we have the following description\begin{equation*}
			\Fil_r^sk_L\llbracket H\rrbracket=\left\{\sum_\alpha c_\alpha\mathbf b^\alpha\colon-\frac{\log r}{\log p}\cdot\tau\alpha\geq s\right\},
		\end{equation*} easily deduced from (\ref{filtronDr}). Therefore, $$\Fil_r^sk_L\llbracket H\rrbracket=\Fil_{r'}^{\frac{\log r'}{\log r}\cdot s}k_L\llbracket H\rrbracket.$$
	\end{proof}
	Consequently, the rings $\gr_rk_L\llbracket H\rrbracket$, $r$ ranging over $[1/p,1)$, are all canonically isomorphic, and we will denote them simply by $\gr k_L\llbracket H\rrbracket$. Likewise, we write $\gr\overline M$ for the canonically isomorphic $\gr k_L\llbracket H\rrbracket$-modules $\gr_r\overline M$ (always keeping the presentation of $\overline M$ fixed) and $\overline\sigma(-)$ instead of $\overline\sigma_r(-)$ for the principal symbols.\\
	Finally, when $\gr_r M_\circ$ is $\varepsilon$-torsion, $\gr\overline M$ has yet another interpretation:
	\begin{lemma}\label{modeps=grmodp}
		If $\gr_rM_\circ$ is $\varepsilon$-torsion free, then $\gr\overline M=\gr_rM_\circ/\varepsilon_L\gr_rM_\circ$.
	\end{lemma}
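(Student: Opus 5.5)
The plan is to identify $\gr\overline M$ with the cokernel of multiplication by $\varepsilon_L$ on $\gr_rM_\circ$. I will write down a natural surjection $\gr_rM_\circ\twoheadrightarrow\gr_r\overline M$ and show that its kernel is exactly $\varepsilon_L\gr_rM_\circ$. The torsion-freeness hypothesis enters only in the inclusion ``kernel $\subset\varepsilon_L\gr_rM_\circ$''.

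\emph{Setting up the surjection.} Let $\pi\colon M_\circ\to\overline M=M_\circ/\varpi_LM_\circ$ be the reduction map. Both $\Fil_r^\bullet M_\circ$ and $\Fil_r^\bullet\overline M$ are pushforwards of the filtration on the free module of the fixed presentation, and $k_L\llbracket H\rrbracket^{\oplus N_0}\twoheadrightarrow\overline M$ factors through $\pi$. Hence $\Fil_r^s\overline M=\pi(\Fil_r^sM_\circ)$ for every $s$, and likewise $\Fil_r^{>s}\overline M=\pi(\Fil_r^{>s}M_\circ)$. So $\pi$ induces surjections $\Fil^s_rM_\circ/\Fil^{>s}_rM_\circ\twoheadrightarrow\Fil^s_r\overline M/\Fil^{>s}_r\overline M$ in each degree. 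These assemble to a surjective graded map $\gr_rM_\circ\to\gr\overline M$ of $\gr_r\mathcal O_L\llbracket H\rrbracket$-modules.

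\emph{The easy inclusion.} For $x\in\Fil_r^{s-1/e_L}M_\circ$ we have $\varpi_Lx\in\Fil^s_rM_\circ$ and $\pi(\varpi_Lx)=0$. Hence $\varepsilon_L\gr_rM_\circ$ lies in the kernel.

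\emph{The reverse inclusion.} First translate the hypothesis. Since $\varepsilon=\sigma(p)$ is a unit in $k_L$ times $\varepsilon_L^{e_L}$, $\varepsilon$-torsion freeness is equivalent to $\varepsilon_L$-torsion freeness. Concretely this gives
\begin{equation*}
\Fil_r^sM_\circ\cap\varpi_LM_\circ=\varpi_L\Fil_r^{s-1/e_L}M_\circ .
\end{equation*}
To see this, suppose $\varpi_Lz\in\Fil^s_r$ while $z$ has exact valuation $t<s-1/e_L$. Then $\varepsilon_L\sigma_r(z)=0$ with $\sigma_r(z)\neq0$, which is a contradiction. If instead $z$ lies in every $\Fil^t_r$, there is nothing to prove. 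This is the same identity already used in the proof of Lemma~\ref{commutation}~ii).

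Now take a homogeneous class of degree $s$ in the kernel, represented by $x\in\Fil_r^sM_\circ$ with $\pi(x)\in\Fil_r^{>s}\overline M=\pi(\Fil_r^{>s}M_\circ)$. Then $x=x'+\varpi_Lz$ with $x'\in\Fil_r^{>s}M_\circ$. It follows that $\varpi_Lz=x-x'\in\Fil_r^sM_\circ\cap\varpi_LM_\circ$, so $\varpi_Lz=\varpi_Lz'$ for some $z'\in\Fil_r^{s-1/e_L}M_\circ$. Therefore the class of $x$ in degree $s$ equals the class of $\varpi_Lz'$, which is $\varepsilon_L$ times the degree-$(s-1/e_L)$ class of $z'$. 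This shows the kernel is $\varepsilon_L\gr_rM_\circ$, so $\gr\overline M=\gr_r\overline M\cong\gr_rM_\circ/\varepsilon_L\gr_rM_\circ$.

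The only delicate point is this last step: converting the symbolic torsion-freeness into the equality of filtration pieces displayed above, including the care needed for elements of infinite valuation. Everything else is bookkeeping with induced filtrations.
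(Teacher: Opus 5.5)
Your proposal is correct and is essentially the paper's argument: both rest on the identity $\Fil_r^sM_\circ\cap\varpi_LM_\circ=\varpi_L\Fil_r^{s-1/e_L}M_\circ$ obtained from torsion-freeness. The only difference is packaging: the paper writes the kernel computation as a chain of isomorphisms (via the second isomorphism theorem), whereas you compute the kernel of an explicit surjection, and you also justify that identity, which the paper only asserts (your use of an exact valuation for $z$ is fine because the filtration jumps lie in a discrete set).
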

	\begin{proof}
		By $\varepsilon$-torsion freeness we have for every $s\in\mathbf R$ that $$\Fil_r^sM_\circ\cap \varpi_LM_\circ=\varpi_L\Fil^{s-1/e_L}_rM_\circ.$$
		Therefore the second equality in the following chain is valid:
		\begin{gather*}
			\begin{aligned}
				\gr_rM_\circ/\varepsilon_L\gr_rM_\circ&=\bigoplus_s\Fil_r^sM_\circ/\left(\Fil_r^{>s}M_\circ+\varpi_L\Fil^{s-1/e_L}_rM_\circ\right)\\
				&=\bigoplus_s\Fil_r^sM_\circ/\left(\Fil_r^{>s}M_\circ+\Fil_r^sM_\circ\cap \varpi_LM_\circ\right)\\&\cong\bigoplus_s\left(\Fil_r^sM_\circ+\varpi_LM_\circ\right)/\left(\Fil_r^{>s}M_\circ+\varpi_LM_\circ\right)\\&=\gr_rM_\circ/\varpi_LM_\circ.
			\end{aligned}
		\end{gather*}
	\end{proof}	
	\section{Lie theory}\label{sectionLie}
	We start by recalling some basic Lie theory for $p$-adic Lie groups, which can be found in \cite[Ch.~III, VII]{Schneider2011}. In this section $G$ is still a $p$-adic Lie group, and $H\subset G$ an open $p$-valued subgroup.
	\subsection{Exponential map}
	As usual, the Lie algebra $\Lie G$ of $G$ is defined to be the tangent space of $G$ at the identity, and has the structure of a Lie algebra over $\mathbf Q_p$. There is the exponential map
	$\exp\colon\Lie G\dashrightarrow G$, but it is only defined on a small enough open of $\Lie G$, on which is it analytic. Nonetheless the situation is rather favorable: choosing a norm on $\Lie G$ we obtain open balls $G_\varepsilon\subset\Lie G$ of radius $\varepsilon>0$ around $0$ and for $\varepsilon$ sufficiently small these can be given a canonical structure of a $p$-adic Lie group. The exponential $\exp\colon\Lie G\dashrightarrow G$ is defined on these $G_\varepsilon$ and actually embeds $G_\varepsilon$ into $G$ as an open subgroup. The $\{G_\varepsilon\}_\varepsilon$ are called the \emph{Lie group germs} of the Lie algebra $\Lie G$. 
	\subsection{Infinitesimal action and character}
	Let $\Pi$ be and admissible $L$-Banach representation of $G$. Then an element $\mathfrak r\in\Lie G$ acts on the locally analytic vectors $\Pi^\text{la}$ by derivations via the exponential map:
	\begin{equation*}
		\mathfrak r.v\coloneqq\frac d{dt}\exp(t\mathfrak r)v\big|_{t=0}
	\end{equation*} for $v\in\Pi^\text{la}$.
	This equation makes sense, as $t\mathfrak r$ will lie in some $G_\varepsilon$ when $t$ is small enough.\\
	If $M$ is the continuous $L$-dual of $\Pi$, then the action of $\Lie G$ on the dual of $\Pi^\text{la}$, which is $D(H,L)\otimes_{L\llbracket H\rrbracket}M$, is given by a map of $\mathbf Q_p$-Lie algebras which we abusively describe as \begin{equation}\label{psi}
		\psi\colon\Lie G\to D(H,L), \mathfrak r\mapsto\log\left([\exp\mathfrak r]\right)=-\sum_{i\geq1}\frac{(1-[\exp\mathfrak r])^i}i.
	\end{equation}
	The abuse of notation here is twofold: first of all, $\exp\mathfrak r$ might simply not be defined, and second of all, even if it were defined, this does not mean $\exp\mathfrak r$ is necessarily an element of $H$. However, taking $N$ big enough, $\log\left([\exp p^N\mathfrak r]\right)$ will be an element of $D(H,L)$ and we define $\psi(\mathfrak r)$ to be $\frac1{p^N}\log\left([\exp p^N\mathfrak r]\right)$.\\
	The map naturally extends to the universal enveloping algebra \begin{equation}\label{infaction}
		U_L(L\otimes_{\mathbf Q_p}\Lie G)\to D(H,L).
	\end{equation}
	In these terms the infinitesimal character can be described, as we now briefly recall. Namely, let $\lambda\colon\mathcal Z(U_L(L\otimes_{\mathbf Q_p}\Lie G))\to L$ be a morphism of $L$-algebras, where $\mathcal Z(U_L(L\otimes_{\mathbf Q_p}\Lie G))$ denotes the center of $U_L(L\otimes_{\mathbf Q_p}\Lie G)$. We say $\Pi$ has \emph{infinitesimal character} $\lambda$ if the action of $U_L(L\otimes_{\mathbf Q_p}\Lie G)$ on $D(H,L)\otimes_{{L\llbracket H\rrbracket}}M$ is given by $\lambda$.
	\subsection{The $\mathbf Z_p$-Lie algebra $H$}
	Of course, going in the other direction we can also identify at least some Lie lattices of $\Lie G$ inside the group $G$, and it turns out that if $H$ is saturated then $H$ itself naturally has the structure of a Lie algebra. Namely, under the following addition and Lie bracket $H$ then becomes a $\mathbf Z_p$-Lie algebra, see \cite[exercice (III, 2.1.10)]{Lazard1965}\footnote{Lazard's exercise is not at all evident, as noted by Serre in his Bourbaki talk, \cite[\S4.2]{Serre1962-1964}. Serre briefly discusses there how one can prove it via the theory developed in \cite{Lazard1965}, which can also be found in \cite[Ch.~VII]{Schneider2011}. Alternatively, one solves the exercise by direct computation, in the spirit of \cite{ddms}, and the reader is advised to look up several useful identities in the latter book as well as in \cite{Schneider2011}.}:
	\begin{gather*}
		\begin{aligned}
			&g+h\coloneqq\lim_{n\to\infty}(g^{p^n}h^{p^n})^{p^{-n}},\\
			&[g,h]\coloneqq\lim_{n\to\infty}(g^{p^n}h^{p^n}g^{p^{-n}}h^{p^{-n}})^{p^{-2n}}.
		\end{aligned}
	\end{gather*}
	The ordered basis then becomes a basis for this as a $\mathbf Z_p$-module and the $p$-valuation $\omega$ on $H$ makes $H$ into a filtered $\mathbf Z_p$-module, where $\mathbf Z_p$ is considered with its $p$-adic filtration. Its associated graded $\gr H$ is then a Lie algebra over $\gr\mathbf Z_p=\mathbf F_p[\varepsilon]$, where the Lie bracket treats the degree additively.\\
	Let us note that $H$ actually appears as a Lie group germ of $\Lie G$ via a suitable norm on $\Lie G$, see \cite[\S31]{Schneider2011}. We call the corresponding identifying map the logarithm which we notate as $\log\colon H\to\Lie H=\Lie G$ and which is an embedding of $\mathbf Z_p$-algebras.\\ Although we will not use this, let us mention that the larger context of what we have seen here sofar is Lazard's equivalence \cite[(IV, 3.2.6)]{Lazard1965} between the category of saturated $p$-valued groups and the category of what \cite{Serre1962-1964} calls ``*-saturated'' $\mathbf Z_p$-Lie algebras.
	\subsection{Lazard's isomorphism}
	Assume $H$ with its $p$-valuation is saturated, so that $H$ can be seen as $\mathbf Z_p$-Lie algebra. Consider the universal enveloping algebra $U_{\mathbf Z_p}(H)$, which is the quotient of the tensor algebra $\bigoplus_{n\geq0}H^{\otimes_{\mathbf Z_p}n}$ by the relations given by the Lie bracket. The tensor algebra obtains an induced filtration by letting $h_1\otimes\cdots\otimes h_n$ be in $\Fil^s$ as long as $\omega(h_1)+\cdots+\omega(h_n)\geq s$. The inequality $\omega([g,h])\geq\omega(g)+\omega(h)$ then implies that the quotient $U_{\mathbf Z_p}(H)$ is still a filtered ring and actually a filtered $\mathbf Z_p$-algebra. One can check that $\gr U_{\mathbf Z_p}(H)\cong U_{\mathbf F_p[\varepsilon]}(\gr H)$, canonically.\\
	
	The logarithm $\log\colon H\to\Lie H$ extends to the universal enveloping algebras and by composing with (\ref{infaction}) and tensoring by $\mathcal O_L$ we obtain a map
	\begin{equation}\label{univHtodistr}U_{\mathcal O_L}(\mathcal O_L\otimes_{\mathbf Z_p}H)\to D(H,L), h\mapsto-\sum_{i\geq1}\frac{(1-[h])^i}i.\end{equation}
	Note that the source is a filtered $\mathcal O_L$-algebra by giving it the tensor filtration via its identification with $\mathcal O_L\otimes_{\mathbf Z_p}U_{\mathbf Z_p}(H)$.\\ The following proposition shows that the associated graded $\gr U_{\mathcal O_L}(\mathcal O_L\otimes_{\mathbf Z_p}H)$ is isomorphic to $\gr_{1/p}\mathcal O_L\llbracket H\rrbracket$, which is \cite[Thm.~(III, 2.3.3)]{Lazard1965}. Note that there are also similar (ungraded) versions of Lazard's isomorphism (\cite[\S2.2]{DPSinfchar}).
	\begin{proposition}\label{grisunivenv}
		The map (\ref{univHtodistr}) postcomposed with the map to $D_{1/p}(H,L)$ is an injective map of filtered $\mathcal O_L$-algebras. In fact, $\Fil_{1/p}^\bullet D_{1/p}(H,L)$ induces on $U_{\mathcal O_L}(\mathcal O_L\otimes_{\mathbf Z_p}H)$ its filtration defined above so that we have an injection \begin{equation*}
			\gr U_{\mathcal O_L}(\mathcal O_L\otimes_{\mathbf Z_p}H)\hookrightarrow\gr_{1/p}D_{1/p}(H,L).
		\end{equation*}
		The image of this map is $\gr_{1/p}\mathcal O_L\llbracket H\rrbracket$.
	\end{proposition}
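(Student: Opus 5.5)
Let me write $U\coloneqq U_{\mathcal O_L}(\mathcal O_L\otimes_{\mathbf Z_p}H)$ and $\ell(h)\coloneqq-\sum_{i\geq1}(1-[h])^i/i$ for the image of $h\in H$. I would first make the problem computable by working with the ordered basis $h_1,\dots,h_d$ and $b_i=[h_i]-1$. The first step is to compute the $\Fil_{1/p}$-valuation of $\ell(h_i)=\sum_{j\geq1}(-1)^{j+1}b_i^j/j$. For $r=1/p$ the filtration index of $c_\alpha\mathbf b^\alpha$ is $v_p(c_\alpha)+\tau\alpha$, so the term $b_i^j/j$ has valuation $j\omega(h_i)-v_p(j)$. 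Since $\omega(h_i)>1/(p-1)$, the minimum over $j$ is attained only at $j=1$, because $(j-1)\omega(h_i)>v_p(j)$ for $j\geq2$. Hence $\sigma_{1/p}(\ell(h_i))=\sigma_{1/p}(b_i)$, in degree $\omega(h_i)$. More generally, for any $h\in H$ I would show that $v_{1/p}(\ell(h))=\omega(h)$, using that $[h]-1$ has valuation $\omega(h)$. This last fact follows from $\omega(h^p)=\omega(h)+1$ together with the description of $\Fil_{1/p}$ via the ordered basis (I would quote the corresponding statement from \cite{cite-key}). This shows that the map (\ref{univHtodistr}) is filtered on generators $\mathcal O_L\otimes H$, and then on all of $U$ by multiplicativity of the filtrations.

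Next I would compare associated gradeds. By PBW, $U$ has an $\mathcal O_L$-basis of ordered monomials $h_1^{\alpha_1}\cdots h_d^{\alpha_d}$, where $h_i$ here denotes the element of the Lie algebra. Its filtration is the one making $\omega$ additive on monomials, since $\gr U\cong U_{\gr\mathcal O_L}(\gr\mathcal O_L\otimes\gr H)$ is again a PBW algebra on the $\sigma(h_i)$. The image of the monomial is $\ell(h_1)^{\alpha_1}\cdots\ell(h_d)^{\alpha_d}$, whose principal symbol by step one is $\sigma(b_1)^{\alpha_1}\cdots\sigma(b_d)^{\alpha_d}=\sigma(\mathbf b^\alpha)$. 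These symbols are $\gr\mathcal O_L$-linearly independent in $\gr_{1/p}D_{1/p}(H,L)$, because $\gr_{1/p}$ of the Mahler-type expansion is the free $\gr L$-module on the $\sigma(\mathbf b^\alpha)$. It follows that the principal symbol of a finite sum $\sum c_\alpha h^\alpha$ is the sum of the lowest-degree terms $\sigma(c_\alpha)\sigma(\mathbf b^\alpha)$, which is nonzero. Two consequences follow. First, the map is injective. Second, the filtration induced from $D_{1/p}$ coincides with that of $U$, because the valuation of the image equals $\min_\alpha(v_p(c_\alpha)+\tau\alpha)$. This gives the injection of gradeds, which is moreover a ring map.

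For the image, $\gr_{1/p}\mathcal O_L\llbracket H\rrbracket$ is spanned over $\gr\mathcal O_L=k_L[\varepsilon_L]$ by the symbols $\sigma(\mathbf b^\alpha)$. Each of these is hit by the symbol of the PBW monomial, so the image contains $\gr_{1/p}\mathcal O_L\llbracket H\rrbracket$. Conversely, the image is generated by the $\sigma(c)\sigma(\mathbf b^\alpha)$ with $c\in\mathcal O_L$, which lie in $\gr_{1/p}\mathcal O_L\llbracket H\rrbracket$ because $b_i\in\mathcal O_L\llbracket H\rrbracket$. Here one should note that the graded pieces of the sub-ring $\mathcal O_L\llbracket H\rrbracket$ inject into those of $D_{1/p}$, since its filtration is the induced one.

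I expect the main obstacle to be the claim in the first step that $v_{1/p}([h]-1)=\omega(h)$ for arbitrary $h$, and more precisely that $\sigma([h]-1)$ depends additively on $\sigma(h)\in\gr H$. Without this, the Lie bracket relations in $U$ might not be compatible with the filtration, and symbols of products might not match. This is exactly the content of Lazard's computation that $\gr_{1/p}\mathcal O_L\llbracket H\rrbracket$ is the enveloping algebra of $\gr H$. I would first try to derive it from the independence of $\|-\|_{1/p}$ from the choice of ordered basis: choose an ordered basis containing $h$, or a power of $h$ after rescaling, and read off the valuation directly.
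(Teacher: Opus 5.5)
Your proposal is correct and follows essentially the same route as the paper. Both arguments first check that $\sigma_{1/p}$ of the image of each basis element $h_i$ is $\sigma_{1/p}([h_i]-1)$, in degree $\omega(h_i)$. Both then use PBW to send the monomials in the $h_i$ to elements with symbols $\sigma_{1/p}(\mathbf b^\alpha)$, whose $\gr\mathcal O_L$-linear independence in $\gr_{1/p}D_{1/p}(H,L)$ gives injectivity, the equality of filtrations, and the image $\gr_{1/p}\mathcal O_L\llbracket H\rrbracket$. The obstacle you flag is avoidable: the paper only checks the ordered basis, implicitly using that $h_1,\dots,h_d$ is an adapted basis of the filtered $\mathbf Z_p$-module $H$. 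Your own symbol computation for PBW monomials never needs additivity of $h\mapsto\sigma_{1/p}([h]-1)$, since products are controlled by submultiplicativity of $\Fil_{1/p}^\bullet$.
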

	\begin{proof}
		One checks that the map is one of filtered $\mathcal O_L$-modules on the ordered basis $h_1,\dots,h_d$, so that the map is also of filtered $\mathcal O_L$-modules. To conclude all but the final statement, it is then sufficient to prove that the map of associated gradeds is injective. For this we use \begin{equation*}
			\gr U_{\mathcal O_L}(\mathcal O_L\otimes_{\mathbf Z_p}H)=U_{k_L[\varepsilon_L]}(k_L[\varepsilon_L]\otimes_{\mathbf F_p[\varepsilon]}\gr H)
		\end{equation*} and the fact that $k_L[\varepsilon_L]\otimes_{\mathbf F_p[\varepsilon]}\gr H$ is free over $k_L[\varepsilon_L]$ with basis given by the principal symbols of the $h_1,\dots,h_d$. Namely, each principal symbol $\sigma(h_i)$ is in degree $\omega(h_i)$ in $\gr U_{\mathcal O_L}(\mathcal O_L\otimes_{\mathbf Z_p}H)$, and the map sends $\sigma(h_i)$ to $\sigma_{1/p}(h_i-1)=\sigma_{1/p}(-\sum_{n\geq1}\frac{(1-[h_i])^n}n)$, which is in the same degree. Using Poincar\'e-Birkhoff-Witt $U_{k_L[\varepsilon_L]}(k_L[\varepsilon_L]\otimes_{\mathbf F_p[\varepsilon]}\gr H)$ can be made completely explicit as $k_L[\varepsilon_L]$-module: it is the symmetric algebra on $\sigma(h_1), \dots, \sigma(h_d)$. The map can then be described as \begin{equation*}
			\sum_{\alpha\in\mathbf Z^d}c_\alpha\sigma(h_1)^{\alpha_1}\cdots\sigma(h_d)^{\alpha_d}\mapsto\sum_{\alpha\in\mathbf Z^d}c_\alpha\sigma_{1/p}([h_1]-1)^{\alpha_1}\cdots\sigma_{1/p}([h_d]-1)^{\alpha_d}
		\end{equation*} and this is clearly injective by our description of $D_{1/p}(H,L)$ and its norm. At the same time we also find that the image is $\gr_{1/p}\mathcal O_L\llbracket H\rrbracket$.
	\end{proof}
	\begin{remark}
		Even when $H$ is just $p$-valued and not necessarily saturated the $\mathbf F_p[\varepsilon]$-module (where $\varepsilon$ corresponds to raising group elements to the $p$-th power) $\gr H\coloneqq\bigoplus_{s\in\mathbf R}\left\{h\in H\colon\omega(h)\geq s\right\}/\left\{h\in H\colon\omega(h)>s\right\}$ is still an $\mathbf F_p[\varepsilon]$-Lie algebra and one has $\gr_{1/p}\mathcal O_L\llbracket H\rrbracket\cong U_{k_L[\varepsilon_L]}(k_L[\varepsilon_L]\otimes_{\mathbf F_p[\varepsilon]}\gr H)$. See \cite[Thm.~(III, 2.3.3)]{Lazard1965}. It allows us to get the Noetherianity in the next corollary more generally.
	\end{remark}
	\begin{corollary}
		Let $H$ be any $p$-valued $p$-adic Lie group. Then the rings $\gr k_L\llbracket H\rrbracket$, $\gr_{1/p}\mathcal O_L\llbracket H\rrbracket$, $\gr_{1/p}L\llbracket H\rrbracket\cong\gr_{1/p}D_{1/p}(H,L)$ are all Noetherian.		
	\end{corollary}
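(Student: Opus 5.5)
The idea is to reduce all three rings to the explicit description of $\gr_{1/p}\mathcal O_L\llbracket H\rrbracket$ given in the preceding Remark, namely $\gr_{1/p}\mathcal O_L\llbracket H\rrbracket\cong U_{k_L[\varepsilon_L]}(k_L[\varepsilon_L]\otimes_{\mathbf F_p[\varepsilon]}\gr H)$.

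First, $\gr H$ is a free $\mathbf F_p[\varepsilon]$-module of finite rank $d$. Indeed, the principal symbols of an ordered basis $h_1,\dots,h_d$ form a basis, since every element of $H$ is uniquely $h_1^{a_1}\cdots h_d^{a_d}$ and $\omega$ of such a product is $\min_i(\omega(h_i)+v_p(a_i))$ (Lazard). Hence $\mathfrak g\coloneqq k_L[\varepsilon_L]\otimes_{\mathbf F_p[\varepsilon]}\gr H$ is a Lie algebra over the commutative Noetherian ring $k_L[\varepsilon_L]$ which is free of rank $d$. By Poincaré–Birkhoff–Witt, $U(\mathfrak g)$ carries the increasing filtration by tensor degree, and its associated graded is the symmetric algebra $k_L[\varepsilon_L][X_1,\dots,X_d]$. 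This polynomial ring is Noetherian by Hilbert's basis theorem. A ring with an exhaustive, positively indexed (hence complete and separated in each degree) filtration whose associated graded is Noetherian is itself Noetherian, by the standard lifting argument for leading terms of left ideals. So $U(\mathfrak g)$, and therefore $\gr_{1/p}\mathcal O_L\llbracket H\rrbracket$, is Noetherian. Alternatively one can check directly that $U(\mathfrak g)$ is an iterated Ore extension, which also gives the result.

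The remaining two rings then follow from this one:
\begin{itemize}
\item By the Remark above, $\gr k_L\llbracket H\rrbracket=\gr_{1/p}\mathcal O_L\llbracket H\rrbracket/\varepsilon_L$ (the $\mathcal O_L\llbracket H\rrbracket$ side being $\varepsilon_L$-torsion free). As a quotient of a Noetherian ring it is Noetherian.
\item By Lemma \ref{localizationtoeps} applied to $M_\circ=\mathcal O_L\llbracket H\rrbracket$, we have $\gr_{1/p}L\llbracket H\rrbracket\cong\gr_{1/p}D_{1/p}(H,L)\cong\gr_{1/p}\mathcal O_L\llbracket H\rrbracket[1/\varepsilon]$. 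The element $\varepsilon$ is central, so this is a central localization of a Noetherian ring, hence Noetherian: an ascending chain of left ideals of the localization contracts to an ascending chain in the original ring, and each ideal is recovered by localizing its contraction.
\end{itemize}

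The main obstacle is the first step, justifying PBW and freeness over $k_L[\varepsilon_L]$ when $H$ is only $p$-valued rather than saturated. There I rely on Lazard's Theorem (III, 2.3.3) as quoted in the Remark, which already gives the identification of $\gr_{1/p}\mathcal O_L\llbracket H\rrbracket$ with the enveloping algebra of a free Lie algebra of finite rank. The subtle point is only the freeness of $\gr H$, which is handled by the ordered-basis computation above.
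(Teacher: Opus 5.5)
Your proof is correct and follows the paper's route: identify $\gr_{1/p}\mathcal O_L\llbracket H\rrbracket$ with the enveloping algebra of the free $k_L[\varepsilon_L]$-Lie algebra $k_L[\varepsilon_L]\otimes_{\mathbf F_p[\varepsilon]}\gr H$ via Lazard, then use the PBW filtration, whose associated graded is a polynomial ring, to get Noetherianity (the paper invokes Li--van Oystaeyen's criterion for complete separated $\mathbf Z$-indexed filtrations where you use the standard leading-term argument), and obtain the other two rings as the quotient by $\varepsilon_L$ and the central localization at $\varepsilon$. Your order of reduction is the right one: the paper's sentence saying it suffices to treat $\gr_{1/p}L\llbracket H\rrbracket$ is a slip for $\gr_{1/p}\mathcal O_L\llbracket H\rrbracket$, as its subsequent appeal to an enveloping algebra over $\gr\mathcal O_L$ shows; your Ore-extension aside is unnecessary.
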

	\begin{proof}
		Since $\gr_{1/p}L\llbracket H\rrbracket$ is a localization of $\gr_{1/p}\mathcal O_L\llbracket H\rrbracket$, of which $\gr k_L\llbracket H\rrbracket$ is a quotient, it suffices to show $\gr_{1/p}L\llbracket H\rrbracket$ is Noetherian. By the previous proposition (and remark) it is the universal enveloping algebra of a finite Lie algebra over $\gr\mathcal O_L$. We have the Poincar\'e-Birkhoff-Witt filtration \begin{equation*}
			\Fil^iU_{\gr\mathcal O_L}\left(\gr H\right)\coloneqq\Span_{\gr\mathcal O_L}\left\{x_1\cdot\cdots\cdot x_j\colon j\leq-i, x_k\in\gr H\right\}
		\end{equation*} for $i\in\mathbf Z$, with respect to which the associated graded ring is a polynomial algebra over $\gr\mathcal O_L$ in finitely many variables (of negative degree), and therefore Noetherian.\\ We now conclude by \cite[Prop.~I.7.1.2]{li1996zariskian}: a ring with a complete and separated filtration is (left, respectively right) Noetherian if its associated graded is and if the jumps of the filtration occur only in $\mathbf Z$ (this last part is implicitly taken into account via the definition of filtrations used by \cite{li1996zariskian}).
	\end{proof}
	\begin{remark}
		Assuming that $\omega$ takes values in $\mathbf Q$ one can deduce Noetherianity of $\mathcal O_L\llbracket H\rrbracket$, $D_{1/p}(H,L)$, etc.~using \cite[Prop.~I.7.1.2]{li1996zariskian} again, because in this case $\Fil^\bullet_{1/p}$ jumps only at $s\in\frac1M\mathbf Z$, for some $M\in\mathbf Z_{>0}$. Then one can also deduce flatness of $D_{1/p}(H,L)$ over $L\llbracket H\rrbracket$, in the same way as in the proof of \cite[Prop.~4.7]{cite-key}. Assuming $\omega$ to be ``strictly saturated'' as defined in the next section, one can then deduce, via (\ref{distralgdecomp}) below, that $D_{r_N}(H,L)$ is flat over $L\llbracket H\rrbracket$, for $r_N\coloneqq p^{-1/p^N}$.\\ Although this would not yield flatness of general $D_r(H,L)$ with $r\in p^{\mathbf Q}$, it would still extend \cite[Prop.~4.7]{cite-key} in the sense that their hypothesis ``(HYP)'' would not be necessary.\\
		In any case, we do not explore this further, because we in fact will not need flatness.
	\end{remark}
	\section{Groups of $p$-powers}\label{ppowersect}
	In this section, we assume the $p$-valuation on $H$ is saturated.\\ 
	
	Let $N\geq0$ be an integer and $r\in[1/p,1)$. The set of $p^N$-powers $$H^{p^N}\coloneqq\left\{h^{p^N}\colon h\in H\right\}$$ is an open subgroup of $H$ and it naturally has an induced \emph{saturated} $p$-valuation, namely $\omega-N$. Also notice that $h_1^{p^N},\dots,h_d^{p^N}$ gives an ordered basis. With respect to this, we can do the above constructions as well for $H^{p^N}$, to obtain filtered rings \begin{equation*}
		D_r(H^{p^N},L)=\left\{\sum_\alpha c_\alpha\mathbf b_{p^N}^\alpha\colon v_p(c_\alpha)-\frac{\log r}{\log p}\tau\alpha\to\infty\text{ as }\alpha\to\infty\right\},
	\end{equation*} where $\mathbf b_{p^N}^\alpha=([h_i^{p^N}]-1)^{\alpha_1}\cdot\cdots\cdot([h_d^{p^N}]-1)^{\alpha_d}$. We also get the induced filtrations $\Fil_r^\bullet\mathcal O_L\llbracket H^{p^N}\rrbracket$ and $\Fil_r^\bullet L\llbracket H^{p^N}\rrbracket$.\\  
	We have a decomposition\begin{equation*}
		\mathcal O_L\llbracket H\rrbracket=\bigoplus_{H^{p^N}h\in H^{p^N}\backslash H}\mathcal O_L\llbracket H^{p^N}\rrbracket\cdot[h]
	\end{equation*} and likewise for $L\llbracket H\rrbracket$ and $D(H,L)$. However, the comparison is a bit more subtle for the distribution algebras of fixed radius (and therefore also the filtrations induced from them). To facilitate their comparison we prefer to consider only the following radii of analyticity:\begin{equation*}
		r_n\coloneqq p^{-1/p^n}, n\in\mathbf Z_{\geq0}.
	\end{equation*}
	We also need the following notion, which is slightly stronger than saturatedness (a $p$-valuation on a $p$-adic Lie group is saturated if every element of an ordered basis has $p$-valuation $\leq p/(p-1)$ by \cite[Prop.~26.11]{Schneider2011}).
	\begin{definition}
		Let $H'$ be a $p$-adic Lie group with $p$-valuation $\omega'$ and ordered basis $h'_1,\dots,h'_n$. We call $\omega'$ \emph{strictly saturated} if $\omega(h'_i)<p/(p-1)$ for all $i$.
	\end{definition}
	It is easy to see that the induced $p$-valuations on the groups of $p^N$-powers are also strictly saturated if the original one is. Under the assumption of strictly saturatedness we have the following relation:
			\begin{lemma} Suppose $\omega$ is strictly saturated. Then
				\begin{equation*}L\llbracket H^{p^N}\rrbracket\cap\Fil_{r_N}^\bullet L\llbracket H\rrbracket=\Fil_{r_0}^\bullet L\llbracket H^{p^N}\rrbracket.\end{equation*}
				Consequently, $\gr_{1/p}L\llbracket H^{p^N}\rrbracket$ and $\gr_{1/p}\mathcal O_L\llbracket H^{p^N}\rrbracket$ identify as subrings of, respectively, $\gr_{r_N}L\llbracket H\rrbracket$ and $\gr_{r_N}\mathcal O_L\llbracket H\rrbracket$. Under this identification, \begin{equation*}
					\gr_{r_N}\mathcal O_L\llbracket H\rrbracket\ni\sigma_{r_N}\left(\left[h_i\right]-1\right)^{p^N}=\sigma_{1/p}\left(\left[h_i^{p^N}\right]-1\right)\in\gr_{1/p}\mathcal O_L\llbracket H^{p^N}\rrbracket.
				\end{equation*}
			\end{lemma}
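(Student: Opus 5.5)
The approach is to compare the two valuations directly on power series expansions. For $x\in L\llbracket H^{p^N}\rrbracket$ I want to show that $v_{r_N}(x)$, computed in $L\llbracket H\rrbracket$ with the ordered basis $h_1,\dots,h_d$, equals $v_{1/p}(x)$, computed in $L\llbracket H^{p^N}\rrbracket$ with the ordered basis $h_1^{p^N},\dots,h_d^{p^N}$ and the $p$-valuation $\omega-N$. Equality of these valuation functions gives equality of the filtrations. It then formally gives the injections of graded rings, and the same holds for $\mathcal O_L\llbracket-\rrbracket$ because the filtration there is induced.

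The normalisations fit as follows. With respect to $\omega-N$ the basis element $h_i^{p^N}$ has valuation $\omega(h_i)$. So for $r=1/p$ the monomial $\mathbf b_{p^N}^\alpha$ has degree $\tau\alpha=\sum_i\alpha_i\omega(h_i)$. On the other side, $-\frac{\log r_N}{\log p}=p^{-N}$, so $b_i=[h_i]-1$ has $r_N$-degree $\omega(h_i)/p^N$. Hence $b_i^{p^N}$ has $r_N$-degree $\omega(h_i)$, and more generally $\mathbf b^{p^N\alpha}$ has $r_N$-degree $\tau\alpha$.

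\textbf{Step 1 (one generator).} I expand
\begin{equation*}
[h_i^{p^N}]-1=(1+b_i)^{p^N}-1=b_i^{p^N}+\sum_{0<k<p^N}\binom{p^N}{k}b_i^k .
\end{equation*}
Write $k=p^jm$ with $p\nmid m$ and $j<N$, and put $t=N-j\geq1$. The $k$-th term has $r_N$-degree $N-j+p^{j-N}m\,\omega(h_i)\geq t+p^{-t}\omega(h_i)$. I need this to be strictly bigger than $\omega(h_i)$, i.e. $t>\omega(h_i)(1-p^{-t})$. Strict saturation gives $\omega(h_i)<p/(p-1)$, hence
\begin{equation*}
\omega(h_i)(1-p^{-t})<\tfrac{p}{p-1}(1-p^{-t})=1+p^{-1}+\dots+p^{1-t}\leq t.
\end{equation*}
So $[h_i^{p^N}]-1=b_i^{p^N}+e_i$ with $v_{r_N}(e_i)>\omega(h_i)$. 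This already yields the displayed identity $\sigma_{r_N}(b_i)^{p^N}=\sigma_{1/p}([h_i^{p^N}]-1)$, once the identification is established.

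\textbf{Step 2 (monomials and general elements).} Multiplying out the ordered product and using multiplicativity of $\Fil_{r_N}$, I get $\mathbf b_{p^N}^\alpha=\mathbf b^{p^N\alpha}+e_\alpha$ with $v_{r_N}(e_\alpha)>\tau\alpha$. By the definition of the norm $\|-\|_{r_N}$ through coefficients of ordered monomials, the symbols $\sigma_{r_N}(\mathbf b^\beta)=\prod_i\sigma_{r_N}(b_i)^{\beta_i}$ (ordered) are linearly independent over $\gr L=k_L[\varepsilon_L^{\pm1}]$ in $\gr_{r_N}D_{r_N}(H,L)$.

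Now take $x=\sum_\alpha c_\alpha\mathbf b_{p^N}^\alpha$ and set $s=\inf_\alpha\bigl(v_p(c_\alpha)+\tau\alpha\bigr)=v_{1/p}(x)$. The infimum is attained at finitely many $\alpha$, since $\tau\alpha\to\infty$ and the $v_p(c_\alpha)$ are bounded below. Then $x\in\Fil^s_{r_N}$ by convergence. Its image in degree $s$ is $\sum_{\alpha\text{ minimal}}\sigma(c_\alpha)\sigma_{r_N}(\mathbf b)^{p^N\alpha}$, which is nonzero by the independence above. Hence $v_{r_N}(x)=s$.

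\textbf{Main obstacle.} The delicate point is Step 2 with infinite sums. The error terms $e_\alpha$ are themselves infinite series, and I must make sure that $\sum_\alpha c_\alpha e_\alpha$ converges in $D_{r_N}(H,L)$ and lies in $\Fil^{>s}_{r_N}$. What is needed is a uniform strict gap, not merely a pointwise one. I plan to get it because $\omega$ takes finitely many values on the basis, so the gap from Step 1 is bounded below by some $\delta>0$ uniformly in $i$. Multiplicativity then gives $v_{r_N}(e_\alpha)\geq\tau\alpha+\delta$ for all $\alpha$, and completeness of the filtration closes the argument.
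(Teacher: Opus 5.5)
Your proposal is correct and follows essentially the same route as the paper. Strict saturation makes the cross terms of $(1+b_i)^{p^N}-1$ strictly higher in $r_N$-degree than $b_i^{p^N}$; the paper phrases these cross terms as the ideal generated by the $p^k([h_i]-1)^{p^{N-k}}$, with the same bound $k+\omega(h_i)/p^k$. One then compares the principal symbol of $\sum_\alpha c_\alpha\mathbf b_{p^N}^\alpha$ with that of $\sum_\alpha c_\alpha\mathbf b^{p^N\alpha}$. Your uniform gap $\delta$ and the convergence argument supply the details of the step the paper leaves as ``one checks''.
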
	
			\begin{proof}
				Using the identity $a^p-1=\sum_{j=1}^p\binom pj(a-1)^j$ repeatedly one can show by induction that modulo the ideal of $\mathcal O_L\llbracket H\rrbracket$ generated by the $p^k([h_i]-1)^{p^{N-k}}$ for $k\in\{1,\dots, N\}$ one has the congruence \begin{equation*}
					[h_i^{p^N}]-1\equiv([h_i]-1)^{p^N}
				\end{equation*} for each $i$. The $r_N$-valuation of a $p^k([h_i]-1)^{p^{N-k}}$ equals $k+\omega(h_i)/p^k$. Since $\omega$ is strictly saturated, it follows that the $r_N$-valuation of $[h_i^{p^N}]-1$ is the same as that of $([h_i]-1)^{p^N}$, namely $\omega(h_i)$. This is $\omega(h_i^{p^N})-N$ and therefore also the $r_0$-valuation of $[h_i^{p^N}]-1$ in $L\llbracket H^{p^N}\rrbracket$. Moreover, we even see that the principal symbols of $[h_i^{p^N}]-1$ and $([h_i]-1)^{p^N}$ in $\gr_{r_N}L\llbracket H\rrbracket$ are the same. This will already give us the last part of the lemma.\\
				Using this observation, one checks that a general element $\sum_\alpha c_\alpha\mathbf b_{p^N}^\alpha$ of $L\llbracket H^{p^N}\rrbracket$ has the same principal symbol in $\gr_{r_N}L\llbracket H\rrbracket$ as $\sum_\alpha c_\alpha\mathbf b^{p^N\alpha}$. Therefore, $\sum_\alpha c_\alpha\mathbf b_{p^N}^\alpha$ has $r_N$-valuation $\inf_\alpha v_p(c_\alpha)+\frac1{p^N}\cdot\tau(p^N\alpha)=\inf_\alpha v_p(c_\alpha)+\tau\alpha$ in $L\llbracket H\rrbracket$, which is also the $r_0$-valuation it has in $L\llbracket H^{p^N}\rrbracket$. We conclude.
			\end{proof}
			It follows that the closure of $L\llbracket H^{p^N}\rrbracket$ in $D_{r_N}(H,L)$ equals $D_{r_0}(H^{p^N},L)$. So $D_{r_N}(H,L)=\bigoplus_{H^{p^N}h\in H^{p^N}\backslash H}D_{r_0}(H^{p^N},L)\cdot[h]$.\\
			As a consequence, we find by induction that in general $D_{r_n}(H^{p^N},L)\subset D_{r_{n+N}}(H,L)$ and \begin{equation}\label{distralgdecomp}
				D_{r_{n+N}}(H,L)=\bigoplus_{H^{p^N}h\in H^{p^N}\backslash H}D_{r_n}(H^{p^N},L)\cdot[h]
			\end{equation} in the strictly saturated case.\\
			We conclude with two more useful observations. The first generalizes an observation made in the previous proof of the previous lemma.
			\begin{lemma}\label{compatibilityppowers}Suppose $\omega$ is strictly saturated. Let $\sum_ic_i\left(\left[h_i\right]-1\right)\in\mathcal O_L\llbracket H\rrbracket$ be an element for which every non-zero $c_i$ is in $\mathcal O_L^\times$ and for which for all $N$ the $\sigma_{r_N}\left(\left[h_i\right]-1\right)$ with $c_i\neq0$ all commute and have the same degree. Then we have for all $N,N'\in\mathbf Z_{\geq0}$ that the element \begin{equation*}
					\sigma_{r_{N+N'}}\left(\sum_ic_i\left(\left[h_i\right]-1\right)\right)^{p^N}\in\gr_{r_{N+N'}}\mathcal O_L\llbracket H\rrbracket
					\end{equation*} equals
				\begin{equation*}\sigma_{r_{N'}}\left(\sum_ic_i^{p^N}\left(\left[h_i^{p^N}\right]-1\right)\right)\in\gr_{r_{N'}}\mathcal O_L\llbracket H^{p^N}\rrbracket\subset\gr_{r_{N+N'}}\mathcal O_L\llbracket H\rrbracket.
				\end{equation*}
			\end{lemma}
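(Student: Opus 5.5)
We reduce to the previous lemma and use that $p$-th powers are additive on commuting elements in characteristic $p$. Write $x\coloneqq\sum_ic_i([h_i]-1)$ and let $I$ be the set of $i$ with $c_i\neq0$.

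First we compute $\sigma_{r_{N+N'}}(x)$. By hypothesis the $\sigma_{r_{N+N'}}([h_i]-1)$ with $i\in I$ all have the same degree $\delta$. Since the $c_i$ are units, $\sum_{i\in I}\bar c_i\,\sigma_{r_{N+N'}}([h_i]-1)$ is homogeneous of degree $\delta$. It is nonzero: the description of $\Fil_r^\bullet$ via monomials $\mathbf b^\alpha$ shows the symbols of the distinct $b_i$ are $k_L$-linearly independent. Hence
\begin{equation*}
\sigma_{r_{N+N'}}(x)=\sum_{i\in I}\bar c_i\,\sigma_{r_{N+N'}}([h_i]-1).
\end{equation*}

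Next we raise this to the $p^N$-th power. The graded ring $\gr_{r_{N+N'}}\mathcal O_L\llbracket H\rrbracket$ is a $k_L[\varepsilon_L]$-algebra, and $p=\varepsilon\cdot0$ in degree $0$, so in degree-zero terms $p=0$ there; in particular the binomial coefficients $\binom{p^N}{j}$ with $0<j<p^N$ vanish. The summands commute by hypothesis, so the Frobenius identity gives
\begin{equation*}
\sigma_{r_{N+N'}}(x)^{p^N}=\sum_{i\in I}\bar c_i^{\,p^N}\,\sigma_{r_{N+N'}}([h_i]-1)^{p^N}.
\end{equation*}

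Now we identify each term on the $H^{p^N}$ side. We apply the previous lemma to the strictly saturated group $H^{p^{N'}}$... but the cleaner route is the congruence in its proof. Modulo the ideal generated by the $p^k([h_i]-1)^{p^{N-k}}$ with $1\le k\le N$, we have $[h_i^{p^N}]-1\equiv([h_i]-1)^{p^N}$. At radius $r_{N+N'}$ these generators have valuation $k+\omega(h_i)/p^{N'+k}$, while $([h_i]-1)^{p^N}$ has valuation $\omega(h_i)/p^{N'}$. Strict saturation gives $\omega(h_i)<p/(p-1)$, which yields $k+\omega(h_i)/p^{N'+k}>\omega(h_i)/p^{N'}$, since $\omega(h_i)(1-p^{-k})/p^{N'}<p/(p-1)\cdot(1-p^{-k})\le k$. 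Therefore
\begin{equation*}
\sigma_{r_{N+N'}}([h_i]-1)^{p^N}=\sigma_{r_{N+N'}}([h_i^{p^N}]-1).
\end{equation*}
By the previous lemma, iterated as in (\ref{distralgdecomp}), this equals $\sigma_{r_{N'}}([h_i^{p^N}]-1)$ computed in $\gr_{r_{N'}}\mathcal O_L\llbracket H^{p^N}\rrbracket$. That step needs the filtration compatibility $\mathcal O_L\llbracket H^{p^N}\rrbracket\cap\Fil_{r_{N+N'}}^\bullet=\Fil_{r_{N'}}^\bullet$, which is obtained by applying the lemma with $H$ replaced appropriately and using $D_{r_{N'}}(H^{p^N},L)\subset D_{r_{N+N'}}(H,L)$ isometrically.

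Finally we reassemble. The elements $\sigma_{r_{N'}}([h_i^{p^N}]-1)$ for $i\in I$ all have degree $p^N\delta$ and are linearly independent, because they are symbols of distinct basis monomials for the ordered basis $h_i^{p^N}$. So, as in the first step, $\sum_i\bar c_i^{\,p^N}\sigma_{r_{N'}}([h_i^{p^N}]-1)$ is the principal symbol of $\sum_ic_i^{p^N}([h_i^{p^N}]-1)$, which is the claim.

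The main obstacle is the third step. We must check that the subring identification is isometric at the pair of radii $(r_{N'},r_{N+N'})$ rather than only $(r_0,r_N)$, and that the valuation inequality from strict saturation holds uniformly in $N'$. Commutativity is used only in the Frobenius step.
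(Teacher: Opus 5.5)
Your proof is correct and follows the paper's argument. Both proofs use the same three steps: additivity of principal symbols for equal-degree summands with unit coefficients, the Frobenius identity in the characteristic-$p$ graded ring (which is where commutativity enters), and the identity $\sigma_{r_{N+N'}}([h_i]-1)^{p^N}=\sigma_{r_{N'}}([h_i^{p^N}]-1)$. The only difference is that you re-derive this identity directly at radius $r_{N+N'}$ from the binomial congruence, checking the strict-saturation inequality uniformly in $N'$. The paper instead cites the preceding lemma, which is stated for $N'=0$ and extended to the general radii $(r_{N'},r_{N+N'})$ only through the brief induction given after that lemma.
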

			\begin{proof}
				By the assumptions, we already find that 
				\begin{equation*}
					\sigma_{r_{N+N'}}\left(\sum_ic_i\left(\left[h_i\right]-1\right)\right)^{p^N}=\sum_ic_i^{p^N}\sigma_{r_{N+N'}}\left(\left[h_i\right]-1\right)^{p^N}.
				\end{equation*}
				(Note that we can also take out the sum on the right hand side of the equality we want to prove.) But then we conclude by \begin{equation*}
					\sigma_{r_{N+N'}}\left(\left[h_i\right]-1\right)^{p^N}=\sigma_{r_{N'}}\left(\left[h_i^{p^N}\right]-1\right),
				\end{equation*}which is a consequence of the preceding lemma.
			\end{proof}
			Finally, by passing to $H^p$ one can put oneself in a situation in which the above commutativity condition is automatic:
			\begin{lemma}\label{commutativity}
				Suppose $\omega$ is saturated. For $N\geq1$ and $r\in[1/p,1)$ the rings $\gr_r\mathcal O_L\llbracket H^{p^N}\rrbracket$ are commutative polynomial $k_L$-algebras in the variables $\sigma_r([h_i^{p^N}]-1)$ and $\varepsilon_L$.
			\end{lemma}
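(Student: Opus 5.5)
We need to show that for $N\geq1$, $\gr_r\mathcal O_L\llbracket H^{p^N}\rrbracket$ is a commutative polynomial ring over $k_L$ in the variables $\sigma_r([h_i^{p^N}]-1)$ and $\varepsilon_L$. Throughout, $H^{p^N}$ carries the saturated $p$-valuation $\omega-N$ and the ordered basis $h_1^{p^N},\dots,h_d^{p^N}$.

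\emph{Step 1: $k_L$-basis of monomials.} The filtration $\Fil_r^\bullet$ on $\mathcal O_L\llbracket H^{p^N}\rrbracket$ is the ``monomial'' filtration: the valuation of $\sum_\alpha c_\alpha\mathbf b_{p^N}^\alpha$ is $\inf_\alpha\bigl(v_p(c_\alpha)-\tfrac{\log r}{\log p}\tau_N\alpha\bigr)$, where $\tau_N\alpha=\sum_i\alpha_i(\omega(h_i)-N)$. Hence $\gr_r\mathcal O_L\llbracket H^{p^N}\rrbracket$ has as $k_L$-basis the symbols of the elements $\varpi_L^k\mathbf b_{p^N}^\alpha$, with $k\geq0$ and $\alpha\in\mathbf Z_{\geq0}^d$, and the symbol of such an element is $\varepsilon_L^k\prod_i\sigma_r(b_{i,N})^{\alpha_i}$ in this fixed order, where $b_{i,N}\coloneqq[h_i^{p^N}]-1$. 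This is true because multiplication of symbols is compatible with multiplication of the elements themselves and $\varepsilon_L$ is central. So the ring is spanned by ordered monomials which are $k_L$-linearly independent. Once the variables commute, the ring is therefore a polynomial ring in them, and the remaining task is to prove commutativity.

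\emph{Step 2: commutators of generators.} We must show $\sigma_r(b_{i,N})\sigma_r(b_{j,N})=\sigma_r(b_{j,N})\sigma_r(b_{i,N})$, that is, that $b_{i,N}b_{j,N}-b_{j,N}b_{i,N}$ has valuation strictly larger than the sum of the valuations $v_r(b_{i,N})+v_r(b_{j,N})$. Set $g=h_i^{p^N}$ and $h=h_j^{p^N}$. Then
\[
b_{i,N}b_{j,N}-b_{j,N}b_{i,N}=[gh]-[hg]=([ghg^{-1}h^{-1}]-1)[hg].
\]
Since $[hg]$ is a unit of valuation $0$, it suffices to bound $v_r([c]-1)$ from below, where $c=[g,h]$ satisfies $\omega(c)\geq\omega(g)+\omega(h)$.

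\emph{Step 3: valuation of $[c]-1$.} For a general $x\in H^{p^N}$, I would use $v_r([x]-1)\geq-\tfrac{\log r}{\log p}(\omega(x)-N)$, with the valuation taken inside $H^{p^N}$. Pass to $H^{p^N}$-coordinates, $x=\prod_k(h_k^{p^N})^{a_k}$ with $\omega(x)=\min_k(\omega(h_k)+v_p(a_k))$ (this uses the ordered basis property). Expanding $[h_k^{p^N}]^{a_k}-1$ binomially in $b_{k,N}$ shows that each term $\binom{a_k}{m}b_{k,N}^m$ has valuation at least $v_p(a_k)+m\rho(\omega(h_k)-N)$, where $\rho=-\log r/\log p\leq1$. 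Since $\rho\leq1$, this is at least $\rho(v_p(a_k)+\omega(h_k)-N)$.

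Now apply this to $c$. In the quotient $\omega$-scale of $H^{p^N}$, the commutator bound reads
\[
\omega_N(c)\geq\omega_N(g)+\omega_N(h)+N,\qquad \omega_N=\omega-N.
\]
The extra $+N\geq1$ is exactly where $N\geq1$ is used. It gives
\[
v_r([c]-1)\geq\rho\bigl(\omega_N(g)+\omega_N(h)+N\bigr)>\rho\,\omega_N(g)+\rho\,\omega_N(h)=v_r(b_{i,N})+v_r(b_{j,N}),
\]
which proves commutativity. The step I expect to be the main obstacle is the estimate in Step 3, specifically checking that $\rho\leq1$ suffices to control the binomial coefficients uniformly in $r$. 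If it does not, I would fall back on the congruence used in the preceding lemma.
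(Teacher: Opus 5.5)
Your route is correct, and it differs from the paper's. The paper reduces to $N=1$ and then splits by radius:
\begin{itemize}
\item for $r>1/p$ it reruns the proof of Schneider--Teitelbaum's Lemma~4.4 under the stronger bound $\omega([h_i^p,h_j^p])-1>\frac{\omega(h_i^p)-1+\omega(h_j^p)-1}{p}+1$;
\item for $r=1/p$ it uses Lazard's isomorphism with an enveloping algebra and the commutativity of $\gr H^p$;
\item the polynomial structure is then quoted from Schneider--Teitelbaum's Theorem~4.5.
\end{itemize}
You instead use one estimate, $v_r([x]-1)\geq\rho\,\omega_N(x)$ for $x\in H^{p^N}$, together with the gap $\omega_N([g,h])\geq\omega_N(g)+\omega_N(h)+N$. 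This handles every $r\in[1/p,1)$ at once and shows that $N\geq1$ is exactly what makes the inequality strict: the gain is $\rho N>0$.

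Schneider--Teitelbaum's argument is built for groups where $\omega([h_i,h_j])=\omega(h_i)+\omega(h_j)$ can occur, so the naive estimate is only non-strict there. Lazard's route gives the enveloping-algebra description the paper uses elsewhere. Neither is needed for this lemma. Your Step 1 is the argument of Theorem~4.5 written out, and it is fine.

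Step 3 needs two repairs.
\begin{itemize}
\item The normalization is off by $N$. The basis element $h_k^{p^N}$ has $\omega_N$-value $\omega(h_k)$, so $\tau_N\alpha=\sum_i\alpha_i\omega(h_i)$ and $\omega_N\bigl(\prod_k(h_k^{p^N})^{a_k}\bigr)=\min_k(\omega(h_k)+v_p(a_k))$. Your final chain already uses the correct normalization.
\item More importantly, the claim that $\binom{a_k}{m}b_{k,N}^m$ has valuation at least $v_p(a_k)+m\rho w$, with $w=\omega(h_k)$, is false. Taking $a_k=m=p$ gives coefficient $1$. The fact that $\rho\leq1$ cannot fix this.
\end{itemize}

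Use instead $v_p\binom am\geq\max(0,v_p(a)-v_p(m))$. Write $j=v_p(a)$ and $k=v_p(m)$, so $m\geq p^k$. For $j\geq k$ one gets
\[
v_p\binom am+m\rho w-\rho(j+w)\geq(1-\rho)j-k+(m-1)\rho w\geq\rho\bigl((p^k-1)w-k\bigr)\geq0.
\]
For $j<k$ the same difference is at least $\rho\bigl((p^k-1)w-j\bigr)>0$. Both cases use $w>1/(p-1)$, which gives $(p^k-1)w\geq1+p+\dots+p^{k-1}\geq k$. This yields $v_r([y^a]-1)\geq\rho(v_p(a)+\omega_N(y))$ for basis elements $y$. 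Your product step over the ordered basis then gives $v_r([x]-1)\geq\rho\,\omega_N(x)$, as you want.

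The fallback you mention would not have been available anyway: the preceding lemma assumes strict saturation and only concerns the radii $r_N$.
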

			\begin{proof}
				The case $N=1$ suffices. The group $H^p$ is considered with ordered basis $h_1^p,\dots,h_d^p$ and $p$-valuation $\omega-1$. We find that \begin{equation*}
					\omega\left([h_i^p,h_j^p]\right)-1\geq\omega(h_i^p)+\omega(h_j^p)-1>\begin{cases}
						\frac{\omega(h_i^p)-1+\omega(h_j^p)-1}p+1&*,\\
						\omega(h_i^p)-1+\omega(h_j^p)-1&**.
					\end{cases}
				\end{equation*} 
				For $r>1/p$ the proof of \cite[Lemma 4.4]{cite-key} works for the commutativity: this lemma still holds if one replaces their ``(HYP)'' by the hypothesis on their $(G,\omega)$ and its ordered basis $h_1,\dots,h_d$ that it is saturated and $\omega([h_i,h_j])>\frac{\omega(h_i)+\omega(h_j)}p+1$; the bound * shows that we are indeed in this situation.\\
				For $r=1/p$ we instead use Proposition \ref{grisunivenv} that $\gr_{1/p}\mathcal O_L\llbracket H^p\rrbracket\cong U_{\mathcal O_L}\left(\mathcal O_L\otimes_{\mathbf Z_p}H\right)\cong U_{k_L[\varepsilon_L]}\left(k_L[\varepsilon_L]\otimes_{\mathbf F_p[\varepsilon]}\gr H\right)$. The commutativity claim then follows by $\gr H$ being a commutative Lie algebra, which is a direct consequence of the bound **.\\
				In all cases one concludes that these are the described polynomial rings in the same way as in \cite[Thm.~4.5]{cite-key}.
			\end{proof}
			\begin{remark}\label{notuniform}
				Often in the literature people prefer to work with the more restrictive notion of ``uniform'' $p$-adic Lie groups $H$ instead of $p$-valued $H$. These are $H$ for which $H/H^p$ is Abelian, and one can put a $p$-valuation on them that sends each ordered basis element to $1$, see \cite[Thm.~4.5]{ddms} and also the remark before \cite[Lemma 4.4]{cite-key}.\\ 
				When $H$ is saturated, the bound * in the proof of the lemma above shows that $H^p$ is actually uniform. Still, the $p$-valuation $\omega-1$ on $H^p$ rather than one sending all ordered basis elements to $1$ is potentially much more interesting to us, even if it might be a bit harder to work with. Indeed, our main theorem will depend crucially on the fact (Lemma \ref{lieelts}) that Casimir elements of $\gl_2 K$ are in some sense homogenous with respect to a certain (``non-uniform'') $p$-valuation.
			\end{remark}
			\section{$\varepsilon$-torsion freeness}\label{torfreesect}
			In this section, $H$ still has $p$-valuation $\omega$, but it is not required to be saturated anymore.\\
			
			Let $M_\circ$ be a finite type $\mathcal O_L\llbracket H\rrbracket$-module without $p$-torsion. Fix a surjection $\mathcal O_L\llbracket H\rrbracket^{\oplus N_0}\twoheadrightarrow M_\circ$ with kernel $N_\circ$. In this section we discuss the following result, essentially due to \cite{cite-key}:
			\begin{proposition}\label{epstorsfree}
				For $N\gg0$ the associated graded module $\gr_{r_N}M_\circ$ is $\varepsilon$-torsion free.
			\end{proposition}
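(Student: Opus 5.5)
Since $\Fil_{r}^\bullet M_\circ$ is induced from $\mathcal O_L\llbracket H\rrbracket^{\oplus N_0}$, the quotient filtration is $(\Fil_r^\bullet\mathcal O_L\llbracket H\rrbracket^{\oplus N_0}+N_\circ)/N_\circ$. So $\varepsilon$-torsion freeness of $\gr_rM_\circ$ amounts to the following: whenever $x\in\mathcal O_L\llbracket H\rrbracket^{\oplus N_0}$ and $px\in\Fil_r^{s+1}+N_\circ$, we must have $x\in\Fil_r^s+N_\circ$. Equivalently, it amounts to $\gr_rN_\circ\subset\gr_r\mathcal O_L\llbracket H\rrbracket^{\oplus N_0}$ being $\varepsilon$-saturated, where $N_\circ$ carries the subspace filtration. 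Indeed, since $\gr_r$ is exact for strict morphisms, $\gr_rM_\circ=\gr_r\mathcal O_L\llbracket H\rrbracket^{\oplus N_0}/\gr_rN_\circ$. The ambient graded module is $\varepsilon$-torsion free, being free over $\gr_r\mathcal O_L\llbracket H\rrbracket$, which has a PBW-type description in the $\sigma_r(b_i)$ and $\varepsilon_L$. I will therefore aim to show that, for $N\gg0$, an element of $\gr_{r_N}$ of the free module that is divisible by $\varepsilon$ in $\gr_{r_N}N_\circ$ lies in $\varepsilon\gr_{r_N}N_\circ$.

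The strategy follows the single-generator case of \cite{cite-key}. Because $M_\circ$ has no $p$-torsion, $N_\circ=N\cap\mathcal O_L\llbracket H\rrbracket^{\oplus N_0}$ with $N=N_\circ[1/p]$. The key input is Noetherianity: $\gr k_L\llbracket H\rrbracket$ is Noetherian by the Corollary in \S\ref{sectionLie}. Consider the reduction $\overline N=(N_\circ+\varpi_L\mathcal O_L\llbracket H\rrbracket^{\oplus N_0})/\varpi_L$ inside $k_L\llbracket H\rrbracket^{\oplus N_0}$. Its associated graded $\gr\overline N$ is independent of $r$ by the Proposition in \S\ref{modpsect}, and it is finitely generated, say by symbols $\overline\sigma(\overline y_1),\dots,\overline\sigma(\overline y_m)$ with $y_j\in N_\circ$. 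The $r$-valuation of a lift $y_j=\sum c_\alpha\mathbf b^\alpha$ is $\min_\alpha\bigl(v_p(c_\alpha)+\tfrac{-\log r}{\log p}\tau\alpha\bigr)$. As $r\to1$ the weight $-\log r/\log p=p^{-N}$ on $\tau\alpha$ goes to $0$, while terms with $c_\alpha\in\varpi_L\mathcal O_L$ contribute at least $1/e_L$. Hence for $N$ large (uniform over the finitely many $y_j$, and using that the $\overline y_j$ have bounded leading degree), $\sigma_{r_N}(y_j)$ is obtained from $\overline\sigma(\overline y_j)$ by viewing it in $\gr_{r_N}\mathcal O_L\llbracket H\rrbracket^{\oplus N_0}$, that is, the reduction of $\sigma_{r_N}(y_j)$ mod $\varepsilon_L$ equals $\overline\sigma(\overline y_j)$.

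Next I would show that $\gr_{r_N}N_\circ$ is generated by the $\sigma_{r_N}(y_j)$ together with $\varepsilon$-multiples, by a successive approximation argument. Take $z\in N_\circ$. Its reduction $\overline z\in\overline N$ is written via the Gröbner-type generators as $\overline z=\sum \overline a_j\overline y_j$ with $\deg\overline\sigma(\overline a_j\overline y_j)\ge\deg\overline\sigma(\overline z)$. Then $z-\sum a_jy_j\in N_\circ\cap\varpi_L\mathcal O_L\llbracket H\rrbracket^{\oplus N_0}=\varpi_LN_\circ$, by $p$-torsion freeness. Iterating and using completeness gives $z=\sum_j a_jy_j$ with controlled $r_N$-valuations. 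From this, if $\varepsilon_L\xi\in\gr_{r_N}N_\circ$ with $\xi$ in the free module, I would lift $\xi$ and show that $\xi$ itself is a combination of the $\sigma_{r_N}(y_j)$, hence lies in $\gr_{r_N}N_\circ$. In other words, $\gr_{r_N}N_\circ$ equals the $\gr_{r_N}\mathcal O_L\llbracket H\rrbracket$-module generated by elements whose reductions generate $\gr\overline N$, and any such module with this "standard basis" property is $\varepsilon_L$-saturated. Torsion freeness for $\varepsilon$ then follows, since $\varepsilon$ is a unit times $\varepsilon_L^{e_L}$.

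The main obstacle is the comparison of degrees in the approximation step. Mixing $\varpi_L$-divisible corrections with the $\tau\alpha/p^N$ weights could make the representation $z=\sum a_jy_j$ fail to be strict, meaning that $v_{r_N}(a_jy_j)<v_{r_N}(z)$ with cancellation. To control this I would choose $N$ so that $p^{-N}\max_j\deg\overline\sigma(\overline y_j)<1/e_L$, as \cite{cite-key} do. In that case each $\varpi_L$ step strictly increases the $r_N$-valuation by more than any degree loss coming from the generators, and the cancellation is ruled out.
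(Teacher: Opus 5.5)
Your reformulation (saturation of $\gr_{r_N}N_\circ$ inside $\gr_{r_N}F$, where $F:=\mathcal O_L\llbracket H\rrbracket^{\oplus N_0}$) and your ingredients are the paper's:
\begin{itemize}
\item a finite standard basis $\overline y_j$ of $\overline N$, coming from Noetherianity of $\gr k_L\llbracket H\rrbracket$ (Lemma \ref{vi});
\item $N$ chosen so that $v_{r_N}(y_j)$ equals the $\Fil_{r_N}$-degree $s_{j,N}$ of $\overline y_j$ (Lemma \ref{rvi});
\item $N_\circ\cap\varpi_LF=\varpi_LN_\circ$.
\end{itemize}
Your successive approximation also goes through with that choice of $N$. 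Writing $z_0=z$ and $z_k-\sum_ja_j^{(k)}y_j=\varpi_Lz_{k+1}$, one checks inductively that $v_{r_N}(\varpi_L^ka_j^{(k)})+v_{r_N}(y_j)\geq v_{r_N}(\varpi_L^kz_k)\geq v_{r_N}(z)$. So the $y_j$ form a standard basis of $N_\circ$, and $\gr_{r_N}N_\circ$ is generated by the $\sigma_{r_N}(y_j)$.

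The gap is the passage from this to saturation. The principle you invoke is false: a graded submodule generated by lifts of a standard basis of its reduction mod $\varepsilon_L$ need not be $\varepsilon_L$-saturated. In $k_L[x,\varepsilon_L]$ with $x$ and $\varepsilon_L$ of the same degree, the ideal $(x,x+\varepsilon_L)$ has reductions $x,x$, which form a standard basis of $(x)$. Yet the ideal contains $\varepsilon_L$ and not $1$. Saturation is the injectivity of $\gr_{r_N}N_\circ/\varepsilon_L\to\gr\overline F$, and generation says nothing about that. Concretely, take $u$ with $\sigma_{r_N}(u)=\varepsilon_L\xi$ and a strict representation $u=\sum_ja_jy_j$. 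After reducing mod $\varepsilon_L$ this only gives a syzygy among the $\overline\sigma(\overline y_j)$, not $\varepsilon_L$-divisibility of the coefficients. So your sentence ``I would lift $\xi$ and show that $\xi$ itself is a combination of the $\sigma_{r_N}(y_j)$'' is the whole content of the proposition, and it is left unproved.

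The missing observation is the paper's one-step argument, and it needs no iteration or completeness: $\varepsilon_L$-divisibility of the symbol forces the reduction to sit strictly deeper in the filtration.
\begin{itemize}
\item Let $u\in N_\circ$ with $\sigma_{r_N}(u)=\varepsilon_L\xi$, where $\xi$ has degree $s$, and let $x\in F$ lift $\xi$.
\item Then $u-\varpi_Lx\in\Fil^{>s+1/e_L}_{r_N}F$. Hence $\overline u\in\Fil^{>s+1/e_L}_{r_N}\overline N$, even though $v_{r_N}(u)=s+1/e_L$.
\item Write $\overline u=\sum_j\overline a_j\overline y_j$ with $\overline a_j\in\Fil^{>s+1/e_L-s_{j,N}}_{r_N}k_L\llbracket H\rrbracket$, and lift to get $u':=\sum_ja_jy_j\in N_\circ$.
\item Because $v_{r_N}(y_j)=s_{j,N}$, this lift satisfies $v_{r_N}(u')>s+1/e_L$.
\item Then $u-u'\in N_\circ\cap\varpi_LF=\varpi_LN_\circ$; write $u-u'=\varpi_Lu_1$ with $u_1\in N_\circ$.
\item Now $\varepsilon_L\sigma_{r_N}(u_1)=\sigma_{r_N}(u-u')=\varepsilon_L\xi$. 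Since $\gr_{r_N}F$ is $\varepsilon_L$-torsion free, $\xi=\sigma_{r_N}(u_1)\in\gr_{r_N}N_\circ$.
\end{itemize}
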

			In \cite{cite-key} this is only proven in the case $M_\circ$ has one generator and under their assumption ``(HYP)'', as part of their proof of \cite[Thm.~4.11]{cite-key}. However, their proof almost goes through verbatim. Nevertheless, we have chosen to give it in full here, because of the crucial role it plays in our later arguments. 
			Write $\overline N=N_\circ/\left(N_\circ\cap\varpi_L\mathcal O_L\llbracket H\rrbracket^{\oplus N_0}\right)$ and, given an $N\geq0$, give it the induced filtration from $\Fil_{r_N}^\bullet k_L\llbracket H\rrbracket^{\oplus N_0}$ and write $\gr_{r_N}\overline N$ for its associated graded. We warn the reader that this is the one and only time we consider a filtration (on $\overline N$) that is not induced by a surjection from some finite free $k_L\llbracket H\rrbracket$-module.\\
			As do \cite{cite-key} we will need the following two lemma's:
			\begin{lemma}\label{vi}
				There exist $v_1,\dots, v_\ell\in N_\circ\setminus\varpi_L\mathcal O_L\llbracket H\rrbracket^{\oplus N_0}$ and, for each $N\geq0$, constants $s_{i,N}\in\mathbf R$ such that\begin{itemize}
					\item for each $N\geq0$, $\overline v_i\in \Fil^{s_{i,N}}_{r_N}(k_L\llbracket H\rrbracket^{\oplus N_0})\setminus \Fil^{>s_{i,N}}_{r_N}(k_L\llbracket H\rrbracket^{\oplus N_0})$
					\item for each $N\geq0$ and $s\in\mathbf R$, $\Fil^s_{r_N}\overline N=\sum_{i=1}^\ell\Fil_{r_N}^{s-s_{i,N}}(k_L\llbracket H\rrbracket)\cdot\overline v_i$. 
				\end{itemize}
			\end{lemma}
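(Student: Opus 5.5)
Since all the filtrations $\Fil_{r_N}^\bullet$ on $k_L\llbracket H\rrbracket^{\oplus N_0}$ are rescalings of one another (the Proposition of \S\ref{modpsect}), I would work with a single filtration, say $\Fil_{1/p}^\bullet$, and its associated graded. Rescaling the degrees does not change which elements generate $\gr\overline N$. The vectors $v_i$ will be taken independent of $N$, and the constants will be $s_{i,N}=\frac{\log r_N}{\log(1/p)}s_{i,0}=p^{-N}s_{i,0}$.

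First, $\gr\overline N\subset\gr k_L\llbracket H\rrbracket^{\oplus N_0}$ is a graded submodule of a finitely generated module over $\gr k_L\llbracket H\rrbracket$. That ring is Noetherian by the Corollary in \S\ref{sectionLie}, so $\gr\overline N$ is finitely generated. Choose homogeneous generators. Each one is the principal symbol $\overline\sigma(\overline v_i)$ of some $\overline v_i\in\overline N$ of exact degree $s_{i,0}$. Lift each $\overline v_i$ to $v_i\in N_\circ$. Because $\overline v_i\neq0$ in $k_L\llbracket H\rrbracket^{\oplus N_0}$, we automatically have $v_i\notin\varpi_L\mathcal O_L\llbracket H\rrbracket^{\oplus N_0}$. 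By the rescaling, $\overline v_i$ then has exact degree $s_{i,N}$ for $\Fil_{r_N}^\bullet$, which gives the first bullet.

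For the second bullet, the inclusion $\supseteq$ holds because $\Fil^\bullet_{r_N}\overline N$ is the filtration induced from the filtered module $k_L\llbracket H\rrbracket^{\oplus N_0}$. For $\subseteq$, I would use the standard argument that a complete, separated filtration whose associated graded is generated by symbols gives a filtered generation statement. Take $x\in\Fil^s\overline N$ and write $\overline\sigma(x)=\sum_i a_i\overline\sigma(\overline v_i)$ with $a_i$ homogeneous of degree $\deg\overline\sigma(x)-s_{i}$. Lift each $a_i$ to $\tilde a_i\in\Fil^{\deg-s_i}k_L\llbracket H\rrbracket$. Then $x-\sum\tilde a_i\overline v_i$ lies in strictly higher filtration, and we iterate.

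The main obstacle is convergence of this iteration. The jumps of the filtration lie in the discrete set $\{\tau\alpha\}$ of nonnegative combinations of finitely many positive reals $\omega(h_i)$. Each step therefore increases the degree by a bounded-below amount, up to finitely many steps within any bounded interval, so the degrees tend to infinity. The sums $\sum_k\tilde a_i^{(k)}$ then converge in $k_L\llbracket H\rrbracket$, which is complete for this filtration since it is the $\mathfrak m$-adic-type topology. Each partial sum stays in $\Fil^{s-s_i}$, and that set is closed because it is defined by vanishing of coefficients. Finally, $\overline N$ is closed in $k_L\llbracket H\rrbracket^{\oplus N_0}$, being a submodule of a finitely generated module over a Noetherian compact ring. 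The remainders tend to $0$, so $x=\sum_i(\sum_k\tilde a_i^{(k)})\overline v_i$ with each coefficient in $\Fil^{s-s_i}$, which completes the proof.
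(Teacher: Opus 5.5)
Your proposal is correct and follows essentially the same route as the paper. You reduce to a single radius via the rescaling of the filtrations, and use Noetherianity of $\gr k_L\llbracket H\rrbracket$ to obtain finitely many homogeneous generators of $\gr\overline N$ that are principal symbols of elements $\overline v_i$, lifted arbitrarily to $v_i\in N_\circ$. You then upgrade the graded statement to the filtered one by successive approximation, using completeness and the discreteness of the jump set $\sum_i\omega(h_i)\mathbf Z_{\geq0}$. The only difference is that you spell out the approximation step in more detail than the paper does; the closedness of $\overline N$ you invoke at the end is harmless but not needed, since the remainders automatically lie in $\overline N$ and the convergence takes place in the coefficients in $k_L\llbracket H\rrbracket$.
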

			\begin{proof}
				Recall first that $\gr_{r_N}k_L\llbracket H\rrbracket^{\oplus N_0}$ and therefore $\gr_{r_N}\overline N$  do not depend on $N$ except for the scaling of the grading. So the result is independent of $N$ and the constants $s_{i,N}$ depend only on $N$ because they need to be scaled appropriately.\\
				The existence of the $\overline v_i$ (and therefore the $v_i$ which we choose to be arbitrary lifts) is then an immediate consequence of the Noetherianity of $\gr k_L\llbracket H\rrbracket$: this implies $\gr\overline N$ is finitely generated over $\gr k_L\llbracket H\rrbracket$ and we may take these generators to be principal symbols, which we can then lift back to get the $\overline v_i$. A priori we then just get, for all $s\in\mathbf R$, that \begin{equation*}
					\Fil^s_{r_N}\overline N=\sum_{i=1}^\ell\Fil_{r_N}^{s-s_{i,N}}(k_L\llbracket H\rrbracket)\cdot\overline v_i+\Fil_{r_N}^{>s}\overline N,
				\end{equation*} but since $\overline N$ is separated and complete for its filtration, the second bullet point indeed holds by an approximation argument, using that the set of jumps of the filtration $\Fil_{r_N}^\bullet\overline N$ is contained in the discrete subset $\sum_i\omega(h_i)\mathbf Z_{\geq0}$ of $\mathbf R$.
			\end{proof}
			The next ingredient is the following lemma, generalizing \cite[Lemma 4.12]{cite-key}:
			\begin{lemma}\label{rvi}
				Let $v\in\mathcal O_L\llbracket H\rrbracket^{\oplus N_0}\setminus\varpi_L\mathcal O_L\llbracket H\rrbracket^{\oplus N_0}$. There is an $r_v\in[1/p,1)$ such that for all $r_v\leq r<1$ and all $s\in\mathbf R$: $v\in\Fil_r^s\left(\mathcal O_L\llbracket H\rrbracket^{\oplus N_0}\right)+\varpi_L\mathcal O_L\llbracket H\rrbracket^{\oplus N_0}$ implies $v\in\Fil_r^s\mathcal O_L\llbracket H\rrbracket^{\oplus N_0}$.
			\end{lemma}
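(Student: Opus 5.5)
We prove the statement by a direct computation with the explicit description of the filtration, using coordinates with respect to the ordered basis. Write $\rho\coloneqq-\frac{\log r}{\log p}\in(0,1]$, so that $\rho\to0$ as $r\to1$. Let $e_1,\dots,e_{N_0}$ be the standard basis of $\mathcal O_L\llbracket H\rrbracket^{\oplus N_0}$ and expand
\begin{equation*}
v=\sum_{j=1}^{N_0}\sum_\alpha c_{\alpha,j}\mathbf b^\alpha e_j,\qquad c_{\alpha,j}\in\mathcal O_L .
\end{equation*}
By (\ref{filtronDr}) and the definition of the direct sum filtration, the $r$-valuation of such an element is $\inf_{\alpha,j}\bigl(v_p(c_{\alpha,j})+\rho\,\tau\alpha\bigr)$. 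Membership in $\Fil_r^s$ means that this infimum is $\geq s$.

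\textbf{Step 1: choose a minimal unit coefficient.} Since $v\notin\varpi_L\mathcal O_L\llbracket H\rrbracket^{\oplus N_0}$, some coefficient $c_{\alpha,j}$ is a unit. Among all such pairs, choose $(\alpha_0,j_0)$ with $t_0\coloneqq\tau\alpha_0$ minimal. This is possible because all $\omega(h_i)>0$, so only finitely many $\alpha$ satisfy any bound $\tau\alpha\le C$. Define $r_v\in[1/p,1)$ by requiring $\rho_v t_0\le 1/e_L$, where $\rho_v=-\log r_v/\log p$. If $t_0=0$, any $r_v$ works.

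\textbf{Step 2: compute $v_r(v)$ for $r\ge r_v$.} For such $r$ we have $\rho t_0\le 1/e_L$, and we claim $v_r(v)=\rho t_0$. The term $(\alpha_0,j_0)$ contributes exactly $\rho t_0$. Any other unit coefficient has $\tau\alpha\ge t_0$, so it contributes at least $\rho t_0$. Any nonzero non-unit coefficient has $v_p(c_{\alpha,j})\ge 1/e_L$, so it contributes at least $1/e_L\ge\rho t_0$.

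\textbf{Step 3: bound $s$.} Suppose $v=w+\varpi_L u$ with $w\in\Fil_r^s\mathcal O_L\llbracket H\rrbracket^{\oplus N_0}$ and $u\in\mathcal O_L\llbracket H\rrbracket^{\oplus N_0}$. The $(\alpha_0,j_0)$-coefficient of $w$ is $c_{\alpha_0,j_0}-\varpi_L u_{\alpha_0,j_0}$, which is still a unit. Hence
\begin{equation*}
s\le v_r(w)\le 0+\rho\,\tau\alpha_0=\rho t_0=v_r(v),
\end{equation*}
i.e.\ $v\in\Fil_r^s\mathcal O_L\llbracket H\rrbracket^{\oplus N_0}$.

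The only point requiring care is Step 2: one must ensure that non-unit coefficients, which may sit in arbitrarily low $\tau$-degree, cannot lower the valuation below the contribution of the chosen unit term. This is exactly what forces $r$ close to $1$, namely the condition $\rho t_0\le 1/e_L$. Everything else is bookkeeping, and the uniqueness of the power series expansions ensures that comparing coefficients is legitimate.
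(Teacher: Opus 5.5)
Your proof is correct and follows the paper's approach: pick a unit coefficient with $\tau\alpha_0$ minimal, and take $r$ close enough to $1$ that $-\frac{\log r}{\log p}\tau\alpha_0\leq 1/e_L$. You spell out the verification that the paper leaves implicit, and your non-strict inequality also covers the edge case $\tau\alpha_0=0$, which the paper's open interval $(0,1/e_L)$ formally excludes.
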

			\begin{proof}
				Write $v=\left(\sum_\alpha c_{i,\alpha}\mathbf b^\alpha\right)_{i=1}^{N_0}$. The set $\{(\alpha,i)\colon c_{i,\alpha}\in\mathcal O_L^\times\}$ is non-empty, take in it an $(\alpha,i)$ for which $\tau\alpha$ is minimal. Now choose $r_v$ such that $-\frac{\log r_v}{\log p}\cdot\tau\alpha\in(0,1/e_L)$.
			\end{proof}
			Now we are ready for the main proof.
			\begin{proof}[Proof of Proposition \ref{epstorsfree}]
				Let $N\geq0$ such that $r_N\geq\max_ir_{v_i}$ where the $r_{v_i}$ are constants chosen from Lemma \ref{rvi} for the $v_i$ from Lemma \ref{vi}. This ensures that $v_i\in\Fil^{s_{i,N}}_{r_N}\mathcal O_L\llbracket H\rrbracket^{\oplus N_0}$ for all $i$. We will prove that $\gr_{r_N}M_\circ$ has no $\varepsilon$-torsion by proving it has no $\varepsilon_L$-torsion.\\
				For this, suppose $x\in\Fil_{r_N}^sM_\circ$ and $\varpi_Lx\in\Fil_{r_N}^{>s+1/e_L}M_\circ$. It suffices to show $x\in\Fil_{r_N}^{>s}M_\circ$. Write \begin{equation*}
					\varpi_Lx=y+u, \text{ where }y\in\Fil_{r_N}
					^{>s+1/e_L}\left(\mathcal O_L\llbracket H\rrbracket^{\oplus N_0}\right), u\in N_\circ.	\end{equation*}
				Our goal is to find $y'\in\mathcal O_L\llbracket H\rrbracket^{\oplus N_0}$ and $u'\in\Fil_{r_N}^{>s+1/e_L}N_\circ$ such that $y=\varpi_Ly'+u'$. Namely, once this is done, we will have that $\varpi_Ly'=y-u'\in\Fil_{r_N}^{>s+1/e_L}\mathcal O_L\llbracket H\rrbracket^{\oplus N_0}$ so that $y'\in\Fil_{r_N}^{>s}\mathcal O_L\llbracket H\rrbracket^{\oplus N_0}$ and $\varpi_L(x-y')=u+u'\in N_\circ$. The $\varpi_L$-torsion freeness of $M_\circ=\mathcal O_L\llbracket H\rrbracket^{\oplus N_0}/N_\circ$ then implies $x\equiv y'\mod N_\circ$, so that $x\in\Fil_{r_N}^{>s}M_\circ$ as we want to show.\\
				Because $\overline y=-\overline u\in\Fil_{r_N}^{>s+1/e_L}\overline N$, we can find by Lemma \ref{vi}, for each $i$, $c_i\in\Fil_{r_N}^{>s+1/e_L-s_{i,N}}\mathcal O_L\llbracket H\rrbracket$ such that \begin{equation*}
					y\equiv c_1v_1+\dots+c_\ell v\ell\mod\varpi_L\mathcal O_L\llbracket H\rrbracket^{\oplus N_0}.
				\end{equation*}
				That is, $y=\varpi_Ly'+u'$ for some $y'\in\mathcal O_L\llbracket H\rrbracket^{\oplus N_0}$ and where $u'\coloneqq c_1v_1+\dots+c_\ell v\ell\in\Fil_{r_N}^{>s+1/e_L}N_\circ$.
			\end{proof}
			\begin{corollary}
				For $N\gg0$ the natural map $\gr_{r_N}M_\circ\to\gr_{r_N}M=\gr_{r_N}M_\circ[1/\varepsilon]$ is an injection.
			\end{corollary}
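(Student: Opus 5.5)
The corollary combines Proposition \ref{epstorsfree} with Lemma \ref{localizationtoeps}. I will fix $N$ large enough that $\gr_{r_N}M_\circ$ has no $\varepsilon$-torsion, which Proposition \ref{epstorsfree} allows since $M_\circ$ is assumed $p$-torsion free. The map in question, $\gr_{r_N}M_\circ\to\gr_{r_N}M$, factors as
\begin{equation*}
\gr_{r_N}M_\circ\to\gr_{r_N}M_\circ[1/\varepsilon]\xrightarrow{\ \sim\ }\gr_{r_N}M=\gr_{r_N}M_{r_N},
\end{equation*}
where the second arrow is the isomorphism of Lemma \ref{localizationtoeps} and the last identification is Lemma \ref{commutation} i). So it suffices to show that the localization map $\gr_{r_N}M_\circ\to\gr_{r_N}M_\circ[1/\varepsilon]$ is injective.

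The kernel of this localization map consists of the elements killed by some power $\varepsilon^k$. Since $\varepsilon$ is central (it is the symbol of $p$, which is central in $\mathcal O_L\llbracket H\rrbracket$), this is an honest Ore-type localization at the central multiplicative set $\{\varepsilon^k\}$. Torsion freeness gives, by induction on $k$, that $\varepsilon^k x=0$ forces $x=0$. Hence the localization map is injective, and so is the composite.

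I should check that the factorization above really is the natural map $\gr_{r_N}M_\circ\to\gr_{r_N}M$, i.e.\ that the map of Lemma \ref{localizationtoeps} restricts on $\gr_{r_N}M_\circ$ to the map induced by the inclusion $M_\circ\subset M$. This is clear from the proof of that lemma, which sends $\sigma_{r_N}(\tilde x)$ to the symbol of the same element in $M$. There is a subtlety: a nonzero symbol in $\gr_{r_N}M_\circ$ could a priori become zero in $\gr_{r_N}M$ if the filtration on $M$ restricted to $M_\circ$ were larger. Lemma \ref{commutation} ii) rules this out under the torsion freeness hypothesis, since it gives $M_\circ\cap\Fil^s_{r_N}M=\Fil^s_{r_N}M_\circ$, so the induced map on graded pieces is injective directly. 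That gives a second, direct route to the statement. I expect no real obstacle; the only care needed is matching the filtrations, which is exactly what Lemma \ref{commutation} ii) provides.
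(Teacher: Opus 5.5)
Your proposal is correct and is essentially the paper's argument: the paper states this corollary without proof, as an immediate consequence of Proposition \ref{epstorsfree} ($\varepsilon$-torsion freeness for $N\gg0$) together with the isomorphism of Lemma \ref{localizationtoeps}, which is exactly your first route. Your alternative route through $M_\circ\cap\Fil^s_{r_N}M=\Fil^s_{r_N}M_\circ$ from Lemma \ref{commutation} ii) is the argument the paper uses for the next result, Corollary \ref{injectionsforNsuffbig}.
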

			\begin{corollary}\label{injectionsforNsuffbig}
				For $N\gg0$, the natural maps $\gr_{r_N}M_\circ\to\gr_{r_N}M_{r_N}$ and $M\to M_{r_N}$ are injections.
			\end{corollary}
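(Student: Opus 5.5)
Both injections will be deduced from the previous corollary together with Lemma \ref{commutation} and Lemma \ref{localizationtoeps}. Throughout, $M_\circ$ is $p$-torsion free and $N$ is taken large enough for Proposition \ref{epstorsfree} to apply, so that $\gr_{r_N}M_\circ$ is $\varepsilon$-torsion free.

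\emph{The graded map.} The map $\gr_{r_N}M_\circ\to\gr_{r_N}M_{r_N}$ is the composite
\begin{equation*}
\gr_{r_N}M_\circ\to\gr_{r_N}M_\circ[1/\varepsilon]\xrightarrow{\ \sim\ }\gr_{r_N}M\xrightarrow{\ \sim\ }\gr_{r_N}M_{r_N}.
\end{equation*}
The second arrow is an isomorphism by Lemma \ref{localizationtoeps}, and the third by Lemma \ref{commutation} i). The first arrow is injective exactly because $\gr_{r_N}M_\circ$ has no $\varepsilon$-torsion. The only thing to check is that this composite really is the map induced by the filtered map $M_\circ\to M_{r_N}$. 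This is immediate, since every filtration involved is induced from the same presentation, and the filtration on $M$ restricts to that on $M_\circ$ (Lemma \ref{commutation} ii)).

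\emph{The map $M\to M_{r_N}$.} By the lemma identifying $M_{r_N}$ with the completion $M^{\wedge_{r_N}}$, the kernel of $M\to M_{r_N}$ is $\bigcap_s\Fil^s_{r_N}M$. So the claim reduces to showing that $\Fil^\bullet_{r_N}M$ is separated.

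Take $x\in\bigcap_s\Fil^s_{r_N}M$. Scale by a power of $p$ so that $x\in M_\circ$; this is harmless since $M=M_\circ[1/p]$ and multiplication by $p$ just shifts the filtration. By Lemma \ref{commutation} ii), $M_\circ\cap\Fil^s_{r_N}M=\Fil^s_{r_N}M_\circ$, so $x\in\bigcap_s\Fil^s_{r_N}M_\circ$.

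It remains to see that $\Fil^\bullet_{r_N}M_\circ$ is separated. This filtration is the quotient filtration from $\mathcal O_L\llbracket H\rrbracket^{\oplus N_0}$. On $\mathcal O_L\llbracket H\rrbracket$, the filtration $\Fil_{r_N}$ defines the compact topology: $\Fil^s_{r_N}$ contains $p^{\lceil s\rceil}\mathcal O_L\llbracket H\rrbracket$ and is contained in $\mathfrak m^{k}$ with $k\to\infty$ as $s\to\infty$. The submodule $N_\circ$ is closed, because $\mathcal O_L\llbracket H\rrbracket$ is Noetherian and compact and finitely generated submodules are closed. Hence
\begin{equation*}
\bigcap_s\left(\Fil^s+N_\circ\right)=N_\circ,
\end{equation*}
so $\bigcap_s\Fil^s_{r_N}M_\circ=0$ and $x=0$. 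Alternatively, one can cite that $M$ is separated for the filtration topology coming from $L\llbracket H\rrbracket$, which is standard for finitely generated modules over this Noetherian Zariskian filtered ring.

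\emph{Main obstacle.} The only non-formal point is the separatedness/closedness in the second part. I would handle it with the compactness argument above: the filtration topology is coarser than the profinite topology and its neighbourhoods are cofinal with the powers $\mathfrak m^k$. This makes the intersection equal to the closure of $N_\circ$, which is $N_\circ$ itself.
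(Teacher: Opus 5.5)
Your proof is correct and follows essentially the paper's own argument. The graded injection comes from $\varepsilon$-torsion freeness together with Lemma~\ref{localizationtoeps} and Lemma~\ref{commutation}~i), and Lemma~\ref{commutation}~ii) reduces the injectivity of $M\to M_{r_N}$ to separatedness of $\Fil^\bullet_{r_N}M_\circ$, which you justify more carefully than the paper's brief appeal to continuity. You should drop your ``alternative'' remark, though: the filtration $\Fil^\bullet_r M$ is not separated in general. For example, take $H\cong\mathbf Z_p$ with generator $h_1$ and $M_\circ=\mathcal O_L\llbracket H\rrbracket/([h_1]-1-\varpi_L)$. Whenever $\|[h_1]-1\|_r<|\varpi_L|$, the element $[h_1]-1-\varpi_L$ becomes a unit in $D_r(H,L)$, so $M_r=0\neq M$. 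This failure is exactly why the $\varepsilon$-torsion freeness for $N\gg0$ is needed.
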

			\begin{proof}
				Take $N\gg0$ so that $\gr_{r_N}M_\circ$ is $\varepsilon$-torsion free. By i) of Lemma \ref{commutation}, $\gr_{r_N}M=\gr_{r_N}M_{r_N}$. By ii) of the same lemma $\gr_{r_N}M_\circ$ injects into $\gr_{r_N}M_\circ$, because of $\varepsilon$-torsion freeness. This implies that $M_\circ$ itself injects into $M_{r_N}$: suppose $x\in M_\circ$ is mapped to $0$, then $x\in\cap_{s\geq0}\Fil_{r_N}^sM_\circ$, but this intersection is $0$, which follows from continuity of $\mathcal O_L\llbracket H\rrbracket^{\oplus N_0}\twoheadrightarrow M_\circ$ for example.
			\end{proof}
			\begin{corollary}
				Let $G$ be a $p$-adic Lie group, containing $H$ as an open subgroup. Let $\Pi$ be an admissible $L$-Banach representation of $G$. Then for $N\gg0$ the $r_N$-analytic vectors (taken with respect to $H$) $\Pi^\text{$r_N$-la}\coloneqq\left(\left(\Pi^*\right)_{r_N}\right)^*$ are dense in $\Pi$.
			\end{corollary}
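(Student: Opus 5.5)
We prove density by duality: a subspace of a Banach space is dense exactly when no nonzero continuous functional kills it. Fix $N\gg0$ as in Corollary \ref{injectionsforNsuffbig}, so that the natural map $M\to M_{r_N}$ is injective, where $M=\Pi^*$ and $M_\circ\subset M$ is the dual of an $H$-stable unit ball. Such an $M_\circ$ is a finitely generated $\mathcal O_L\llbracket H\rrbracket$-module without $p$-torsion, so Proposition \ref{epstorsfree} and its corollaries apply to it.

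First we describe $\Pi^{r_N\text{-la}}=(M_{r_N})^*$ as a subspace of $\Pi$. The natural map $M\to M_{r_N}$ is continuous. Dualizing it gives a continuous map
\begin{equation*}
\iota\colon (M_{r_N})^*\to M^*.
\end{equation*}
Here $(M_{r_N})^*$ is the continuous dual of $M_{r_N}$, equipped with the norm from its filtration, so it is a Banach space. Since $\Pi$ is reflexive in the admissible setting, $M^*\cong\Pi$, via the anti-equivalence between admissible Banach representations and finitely generated $L\llbracket H\rrbracket$-modules. Because $M$ is dense in $M_{r_N}=M^{\wedge_{r_N}}$, the map $\iota$ is injective. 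Its image is exactly the subspace of $r_N$-analytic vectors.

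Now suppose $\Pi^{r_N\text{-la}}$ were not dense in $\Pi$. By Hahn--Banach (valid over the spherically complete field $L$), there is a nonzero $\ell\in\Pi^*=M$ that vanishes on $\iota\big((M_{r_N})^*\big)$. Every $\lambda\in(M_{r_N})^*$ then satisfies $\lambda(\ell)=0$, where $\ell$ is regarded as an element of $M_{r_N}$ via $M\to M_{r_N}$. The module $M_{r_N}$ is a finitely generated module over the Banach algebra $D_{r_N}(H,L)$, complete for a filtration that is separated, since the relevant intersection vanishes. Hence $M_{r_N}$ is a Banach space, and its continuous functionals separate its points. So the image of $\ell$ in $M_{r_N}$ is zero. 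Injectivity of $M\to M_{r_N}$ then gives $\ell=0$, a contradiction.

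The main obstacle is the functional-analytic bookkeeping. The identification $\Pi\cong M^*$ must match the topology on $M$ that makes $M\to M_{r_N}$ continuous. In addition, $M_{r_N}$ must be Hausdorff, i.e.\ the filtration $\Fil^\bullet_{r_N}$ must be separated on $M_{r_N}$ and not only on $M$. I would obtain this separatedness from the isomorphism $\gr_{r_N}M\cong\gr_{r_N}M_{r_N}$ of Lemma \ref{commutation}, together with completeness. The injectivity itself comes directly from Corollary \ref{injectionsforNsuffbig}.
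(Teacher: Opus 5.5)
Your argument is the paper's: choose $N$ by Corollary~\ref{injectionsforNsuffbig} so that $\Pi^*\to(\Pi^*)_{r_N}$ is injective, use Hahn--Banach to get a nonzero $\ell\in\Pi^*$ vanishing on $\Pi^{r_N\text{-la}}$, and conclude that $\ell$ maps to $0$ in $(\Pi^*)_{r_N}$, a step the paper just calls ``by construction'' and which you justify more carefully. Two bits of that bookkeeping need adjusting. First, ``reflexive'' is wrong, since an infinite-dimensional $L$-Banach space is not reflexive for the dual norm (already $c_0$ fails, $L$ being spherically complete); $\Pi\cong\Hom^{\mathrm{cont}}_L(M,L)$ should instead be read as Schneider--Teitelbaum duality for the topology on $M$ in which $M_\circ$ is an open compact lattice, and for this topology $M\to M_{r_N}$ is indeed continuous. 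Second, separatedness of $M_{r_N}$ cannot be read off from $\gr_{r_N}M\cong\gr_{r_N}M_{r_N}$, because associated gradeds do not see $\bigcap_s\Fil^s_{r_N}$; you do not need it, though, since Lemma~\ref{commutation}~ii) gives $M_\circ\cap\bigcap_s\Fil^s_{r_N}M_{r_N}=\bigcap_s\Fil^s_{r_N}M_\circ=0$, which is all Hahn--Banach needs to produce $\lambda\in(M_{r_N})^*$ with $\lambda(\ell)\neq0$.
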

			\begin{proof}
				Take $N\gg0$ so that $\Pi^*\to\left(\Pi^*\right)_{r_N}=\left(\Pi^\text{$r_N$-la}\right)^*$ is injective. Suppose, for a contradiction, that $\Pi^\text{$r_N$-la}\subset\Pi$ is not dense. Let $V$ be the closure of $\Pi^\text{$r_N$-la}$ inside $\Pi$ and $w\in\Pi\setminus V$. Denote by $\|-\|$ the norm on $\Pi$. One can define a continuous $L$-valued linear form $\ell$ on $Lw+V\cong Lw\oplus V$ with kernel $V$ and $|\ell(w)|\leq\inf\{\|w+v\|\colon v\in V\}>0$.\\
				Then this form extends to an element $\ell'\in\Pi^*$ by Hahn-Banach (\cite[Prop.~9.2]{Schneider2002}). The form $\ell'$ is non-zero but in the kernel of $\Pi^*\to\left(\Pi^*\right)_{r_N}$ by construction, so that we arrive at our desired contradiction.
			\end{proof}			
			\section{Bounding the Gelfand-Kirillov dimension}\label{sectionAppli}
			Now we will turn to an application of the above generalities: let $p>2$ and $K$ be a $p$-adic field, then we will show that any admissible $p$-adic Banach representation with infinitesimal character of $\GL_2K$ or the units of the quaternion algebra over $K$ has Gelfand-Kirillov dimension at most $[K\colon\mathbf Q_p]$. Before we explain our setup, we briefly remind the reader on the notion of Gelfand-Kirillov dimension.
			\subsection{Reminder on the Gelfand-Kirillov dimension}\label{GKsection}
			Let $G$ be a $p$-adic Lie group and $H$ any compact open subgroup. Let $C$ be one of the rings of coefficients $k_L,\mathcal O_L,$ or $L$. Let $M$ be a finitely generated $C\llbracket H\rrbracket$-module. The following notion essentially captures the Gelfand-Kirillov dimension: the \emph{grade} $j_{C\llbracket H\rrbracket}(M)$ of $M$ is defined to be \begin{equation*}
				j_{C\llbracket H\rrbracket}(M)\coloneqq\inf\left\{i\colon\Ext^i_{C\llbracket H\rrbracket}(M,C\llbracket H\rrbracket)\neq0\right\}.
			\end{equation*}
			The definition of the Gelfand-Kirillov dimension is as follows: the \emph{Gelfand-Kirillov dimension} of $M$ is defined to be \begin{equation*}
				d_{C\llbracket H\rrbracket}(M)\coloneqq\begin{cases}
					\dim H-j_{C\llbracket H\rrbracket}(M)&\text{if $C$ is $k_L$ or $L$,}\\
					\dim H+1-j_{C\llbracket H\rrbracket}(M)&\text{if $C=\mathcal O_L$.}
				\end{cases}
			\end{equation*}
			If $\Pi$ (resp.~$\pi$) is an admissible $L$-Banach (resp.~ admissible smooth $k_L$-)representation of $G$, then its continuous dual is a finitely generated $L\llbracket H\rrbracket$-(resp.~$k_L\llbracket H\rrbracket$-)module and we define its Gelfand-Kirillov dimension to be that of its dual.
			\begin{lemma}\label{gklemma}
				Let $M$ be a finitely generated $L\llbracket H\rrbracket$-module, $L'/L$ a finite field extension, and $H'\subset H$ an open subgroup. Then \begin{equation*}
					d_{L\llbracket H\rrbracket}(M)=d_{L'\llbracket H\rrbracket}(L'\otimes_LM)=d_{\mathbf Q_p\llbracket H\rrbracket}(M)=d_{\mathbf Q_p\llbracket H'\rrbracket}(M),
				\end{equation*} and the analogous statements hold for $M_\circ$ and $\overline M$ (over $\mathcal O_L$ and $k_L$ respectively).
			\end{lemma}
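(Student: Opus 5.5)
The plan is to reduce every equality to statements about the grade $j$, i.e.\ about vanishing of $\Ext$ groups, and then use standard base change for $\Ext$ over Noetherian rings. The dimension $\dim H=\dim H'$ is the same throughout, so it suffices to compare grades.

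\emph{Scalar extension.} For the first equality, note that $L'\llbracket H\rrbracket=L'\otimes_LL\llbracket H\rrbracket$ is free of finite rank over $L\llbracket H\rrbracket$. Since $M$ is finitely generated over the Noetherian ring $L\llbracket H\rrbracket$, it has a resolution by finite free modules, and computing with it gives
\begin{equation*}
\Ext^i_{L'\llbracket H\rrbracket}(L'\otimes_LM,L'\llbracket H\rrbracket)\cong L'\otimes_L\Ext^i_{L\llbracket H\rrbracket}(M,L\llbracket H\rrbracket).
\end{equation*}
Because $L'$ is faithfully flat over $L$, one side vanishes exactly when the other does.

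\emph{Restriction to $\mathbf Q_p$.} For the second equality I view $M$ as a $\mathbf Q_p\llbracket H\rrbracket$-module. The ring $L\llbracket H\rrbracket=L\otimes_{\mathbf Q_p}\mathbf Q_p\llbracket H\rrbracket$ is finite free over $\mathbf Q_p\llbracket H\rrbracket$, and
\begin{equation*}
\Hom_{\mathbf Q_p\llbracket H\rrbracket}(-,\mathbf Q_p\llbracket H\rrbracket)\cong\Hom_{L\llbracket H\rrbracket}(-,\Hom_{\mathbf Q_p}(L,\mathbf Q_p)\otimes_{\mathbf Q_p}\mathbf Q_p\llbracket H\rrbracket).
\end{equation*}
The module $\Hom_{\mathbf Q_p}(L,\mathbf Q_p)\otimes\mathbf Q_p\llbracket H\rrbracket$ is free of rank one over $L\llbracket H\rrbracket$, using the trace form. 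Combining this with the fact that $L\llbracket H\rrbracket$ is free over $\mathbf Q_p\llbracket H\rrbracket$ (an Eckmann--Shapiro argument) gives
\begin{equation*}
\Ext^i_{\mathbf Q_p\llbracket H\rrbracket}(M,\mathbf Q_p\llbracket H\rrbracket)\cong\Ext^i_{L\llbracket H\rrbracket}(M,L\llbracket H\rrbracket)
\end{equation*}
as abelian groups.

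\emph{Restriction to $H'$.} For the third equality, $\mathbf Q_p\llbracket H\rrbracket=\bigoplus_{H'h}\mathbf Q_p\llbracket H'\rrbracket[h]$ is finite free over $\mathbf Q_p\llbracket H'\rrbracket$, both as a left and as a right module. There is an isomorphism of $(\mathbf Q_p\llbracket H\rrbracket,\mathbf Q_p\llbracket H'\rrbracket)$-bimodules
\begin{equation*}
\Hom_{\mathbf Q_p\llbracket H'\rrbracket}(\mathbf Q_p\llbracket H\rrbracket,\mathbf Q_p\llbracket H'\rrbracket)\cong\mathbf Q_p\llbracket H\rrbracket,
\end{equation*}
the usual coinduction equals induction for open subgroups of finite index, realized by projection onto the identity coset. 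The same Shapiro-type argument then gives
\begin{equation*}
\Ext^i_{\mathbf Q_p\llbracket H'\rrbracket}(M,\mathbf Q_p\llbracket H'\rrbracket)\cong\Ext^i_{\mathbf Q_p\llbracket H\rrbracket}(M,\mathbf Q_p\llbracket H\rrbracket).
\end{equation*}
Hence the grades, and therefore the dimensions, coincide.

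\emph{Integral and mod $p$ versions.} The analogous statements for $M_\circ$ and $\overline M$ follow verbatim. For $M_\circ$ one uses $\mathcal O_{L'}$ free over $\mathcal O_L$, the $\mathcal O_L$-linear dual of $\mathcal O_L$ over $\mathbf Z_p$ (the inverse different, free of rank one), and the same coset decomposition. For $\overline M$ one uses $k_{L'}$ over $k_L$. Two points need care there. First, in the $\mathbf Z_p$-comparison one must use $\mathcal O_L$ rather than $L$, and the ``$+1$'' in the $\mathcal O_L$ normalization is unchanged. Second, passing from $k_L$ to $\mathbf F_p$ requires the residue field extension, which is finite separable, so the trace form is again perfect.

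The main obstacle I expect is making the dualizing-bimodule identifications precise, namely that the coinduced module is isomorphic to the free module with the correct bimodule structure. These identifications are standard, so I would largely cite the literature for them, e.g.\ the corresponding arguments of Venjakob or Ardakov--Brown on Auslander regularity of Iwasawa algebras.
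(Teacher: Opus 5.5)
Your proposal is correct and follows the same route as the paper's one-line proof: compare grades by computing $\Ext$ via finite free resolutions, using that each larger Iwasawa algebra $S$ is finite free over the smaller one $R$. It also spells out the step the paper leaves implicit, namely that $\Hom_R(S,R)\cong S$ as left $S$-modules (via $\Hom_{\mathbf Q_p}(L,\mathbf Q_p)\cong L$, resp.\ projection onto the trivial coset), which is what turns finite freeness into an isomorphism of $\Ext$ groups.

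The trace form is not needed for this step: $\Hom_{\mathbf Q_p}(L,\mathbf Q_p)$ is an $L$-vector space of $\mathbf Q_p$-dimension $[L:\mathbf Q_p]$, hence one-dimensional over $L$. The same dimension count gives the $\mathbf F_p$ case, and combined with torsion-freeness and rank over the discrete valuation ring $\mathcal O_L$ it gives the $\mathbf Z_p$ case.
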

			\begin{proof}
				This follows directly from $L'\llbracket H\rrbracket$ being finite free over $L\llbracket H'\rrbracket$ and the fact that $\Ext^i_{L\llbracket H\rrbracket}(M,L\llbracket H\rrbracket)$ is computed by applying $\Hom_{L\llbracket H\rrbracket}(-,L\llbracket H\rrbracket)$ to a resolution of $M$ by finite free $L\llbracket H\rrbracket$-modules. The reasoning for $M_\circ$ and $\overline M$ is analogous. 
			\end{proof}
			Consequently, the Gelfand-Kirillov dimension does not depend on the choice of compact open subgroup $H$ or of coefficients. This means that one can try to choose $H$ in a way that facilitates the calculation of the Gelfand-Kirillov dimension. For example, Lemma \ref{commutativity} puts us in the situation where the following can be used:
			\begin{proposition}[{\cite[Prop.~3.5]{dospinescu2023gelfandkirillovdimensionpadicjacquetlanglands}}]\label{gradeq}
				Suppose that $k_L\llbracket H\rrbracket$ is complete for some filtration with respect to which $\gr(k_L\llbracket H\rrbracket)$ is a commutative Noetherian regular ring. \\
				Then for any $p$-torsion free $\mathcal O_L\llbracket H\rrbracket$-module $M_\circ$ with $M=M_\circ[1/p]$ and reduction modulo $\varpi_L$ equal to $\overline M$: 
				\begin{equation*}
					j_{\mathcal O_L\llbracket H\rrbracket}(M_\circ)=j_{L\llbracket H\rrbracket}(M)=j_{k_L\llbracket H\rrbracket}(\overline M).
				\end{equation*}
				Moreover, if $\overline M$ has a \emph{good filtration}, that is a filtration compatible with the one on $k_L\llbracket H\rrbracket$ for which the associated graded $\gr\overline M$ is finitely generated over $\gr k_L\llbracket H\rrbracket$, then the above quantities are all equal to \begin{equation*}
					j_{\gr k_L\llbracket H\rrbracket}(\gr\overline M)\coloneqq\inf\left\{i\colon\Ext^i_{\gr k_L\llbracket H\rrbracket}\left(\gr\overline M,\gr k_L\llbracket H\rrbracket\right)\neq0\right\}.
				\end{equation*}
			\end{proposition}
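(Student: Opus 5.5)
The plan is to pass all three grades through a single filtered object, using the assumption on $k_L\llbracket H\rrbracket$ to transfer Auslander-regularity-type properties up to $\mathcal O_L\llbracket H\rrbracket$ and $L\llbracket H\rrbracket$. First, since $k_L\llbracket H\rrbracket$ is complete with commutative Noetherian regular associated graded, it is an Auslander regular Zariskian filtered ring (Björk, Li–Van Oystaeyen), and for every finitely generated module with a good filtration one has $j_{k_L\llbracket H\rrbracket}(\overline M)=j_{\gr k_L\llbracket H\rrbracket}(\gr\overline M)$. This is the standard comparison via the spectral sequence $\Ext_{\gr}(\gr\overline M,\gr k_L\llbracket H\rrbracket)\Rightarrow\gr\Ext(\overline M,k_L\llbracket H\rrbracket)$ together with the Auslander condition on the graded side. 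This already gives the ``moreover'' part once the first chain of equalities is proven.

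Next I would compare $\mathcal O_L\llbracket H\rrbracket$ with $k_L\llbracket H\rrbracket=\mathcal O_L\llbracket H\rrbracket/\varpi_L$. The element $\varpi_L$ is central and regular, and $\mathcal O_L\llbracket H\rrbracket$ is $\varpi_L$-adically (indeed $\mathfrak m$-adically) complete. Rees' lemma then gives
\begin{equation*}
\Ext^{i+1}_{\mathcal O_L\llbracket H\rrbracket}(\overline M,\mathcal O_L\llbracket H\rrbracket)\cong\Ext^i_{k_L\llbracket H\rrbracket}(\overline M,k_L\llbracket H\rrbracket).
\end{equation*}
I would apply $\Hom(-,\mathcal O_L\llbracket H\rrbracket)$ to the exact sequence $0\to M_\circ\xrightarrow{\varpi_L}M_\circ\to\overline M\to0$, which is exact by $p$-torsion freeness. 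The resulting long exact sequence, combined with Nakayama for the finitely generated modules $\Ext^i(M_\circ,\mathcal O_L\llbracket H\rrbracket)$ (multiplication by $\varpi_L$ on a nonzero one cannot be surjective), shows that the first nonzero $\Ext^i(M_\circ,-)$ injects nontrivially into $\Ext^{i+1}(\overline M,-)$. One also needs that nothing earlier appears, and this yields $j_{\mathcal O_L\llbracket H\rrbracket}(M_\circ)=j_{k_L\llbracket H\rrbracket}(\overline M)$. The shift by one is exactly absorbed by the $+1$ in the definition of $d$ for $\mathcal O_L$. Making the ``nothing earlier'' direction rigorous requires the Auslander condition on $\mathcal O_L\llbracket H\rrbracket$. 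I would obtain this by lifting the filtration: the filtration on $\mathcal O_L\llbracket H\rrbracket$ mixing the given one with the $\varpi_L$-adic one has associated graded $\gr k_L\llbracket H\rrbracket[\varepsilon_L]$, which is again commutative regular, so $\mathcal O_L\llbracket H\rrbracket$ is Auslander regular.

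Finally, for $L\llbracket H\rrbracket=\mathcal O_L\llbracket H\rrbracket[1/p]$, localization is flat and commutes with $\Ext$ of finitely generated modules over the Noetherian ring. Hence $\Ext^i_{L\llbracket H\rrbracket}(M,L\llbracket H\rrbracket)=\Ext^i_{\mathcal O_L\llbracket H\rrbracket}(M_\circ,\mathcal O_L\llbracket H\rrbracket)[1/p]$, so $j_L\geq j_{\mathcal O_L}$. Equality requires that the first nonzero $\Ext^{j}(M_\circ,\mathcal O_L\llbracket H\rrbracket)$ is not $p$-power torsion. I expect this to be the main obstacle. My approach is to use the Auslander condition: if $E=\Ext^j(M_\circ,\mathcal O_L\llbracket H\rrbracket)$ were killed by $\varpi_L^n$, then $j(E)\geq j+1$ would be forced via the Rees shift. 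However, $E$ is a subquotient-related module of grade $\geq j$ whose double-Ext spectral sequence recovers the pure part of $M_\circ$. Using $p$-torsion freeness of $M_\circ$ and the purity/double dual argument ($M_\circ\to\Ext^j(\Ext^j(M_\circ))$ is nonzero on the grade-$j$ part), I would derive a contradiction. This is the standard argument, as in Ardakov–Wadsley, that the canonical dimension is insensitive to inverting $p$.
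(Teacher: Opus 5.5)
The paper does not prove this statement; it quotes it from \cite{dospinescu2023gelfandkirillovdimensionpadicjacquetlanglands}. So your proposal has to stand on its own. The overall architecture is sound: graded comparison for the ``moreover'' part, Rees plus the long exact sequence for $\varpi_L$, and localization for $L$. However, the one step that carries the real content is asserted rather than proved.

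\textbf{What the long exact sequence actually gives.} Put $R=\mathcal O_L\llbracket H\rrbracket$, $j=j_R(M_\circ)$ and $E=\Ext^j_R(M_\circ,R)$. The sequence gives:
\begin{itemize}
\item $\Ext^i_R(\overline M,R)=0$ for $i<j$;
\item $\Ext^j_R(\overline M,R)\cong E[\varpi_L]$;
\item $E/\varpi_LE\hookrightarrow\Ext^{j+1}_R(\overline M,R)$. Note that it is $E/\varpi_L E$, not $E$, that injects.
\end{itemize}
Together with Nakayama and Rees, this yields only $j_{k_L\llbracket H\rrbracket}(\overline M)\le j_R(M_\circ)$. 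The reverse inequality is \emph{equivalent} to $E[\varpi_L]=0$.

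\textbf{Why your justification does not cover it.} You say this ``requires the Auslander condition'' but give no mechanism, and the obvious one fails. Applying the Auslander condition to $T\coloneqq E[\varpi_L]\subseteq E$ only gives $j_R(T)\ge j$. That is perfectly compatible with $j_R(T)=j_{k_L\llbracket H\rrbracket}(T)+1$.

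Your final paragraph does not close the gap either. The double-dual argument there can be completed: if $\varpi_L^nE=0$, then $E$ is filtered by the $\varpi_L^rE$, whose graded pieces are quotients of $E/\varpi_LE$. Hence $j_R(E)\ge j+1$, contradicting $j_R(E)=j$. But this only excludes that $E$ is \emph{entirely} $\varpi_L$-power torsion. It yields $j_{L\llbracket H\rrbracket}(M)=j_R(M_\circ)$, not the equality with $j_{k_L\llbracket H\rrbracket}(\overline M)$. So you have misplaced the main obstacle: it is the $k_L$-comparison, not the $L$-comparison.

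\textbf{How to fill the gap.} Use the same tools, but apply the Auslander condition to $\Ext^{j+1}_R(\overline M,R)$ instead of to $E$. Suppose $T\neq0$.
\begin{itemize}
\item Then $j_R(\overline M)=j$ and $T\cong\Ext^j_R(\overline M,R)$. In an Auslander--Gorenstein ring $j(\Ext^{j(N)}(N,R))=j(N)$, so $j_R(T)=j$.
\item On the other hand, consider the filtration $T_r\coloneqq T\cap\varpi_L^rE$. It reaches $0$ once the chain $E[\varpi_L^r]$ has stabilised.
\item Writing $t\in T_r$ as $\varpi_L^re$, the assignment $t\mapsto e$ induces injections $T_r/T_{r+1}\hookrightarrow E/(\varpi_LE+E[\varpi_L^r])$. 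The targets are quotients of $E/\varpi_LE\subseteq\Ext^{j+1}_R(\overline M,R)$.
\item By the Auslander condition and exactness of the grade, each $T_r/T_{r+1}$, and hence $T$, has grade $\ge j+1$. This contradicts $j_R(T)=j$.
\end{itemize}
Once $E$ is known to be nonzero and $\varpi_L$-torsion free, $\Ext^j_{L\llbracket H\rrbracket}(M,L\llbracket H\rrbracket)=E[1/p]\neq0$. The $L$-equality then follows at once, and the double-dual detour is unnecessary.

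\textbf{A secondary point.} You obtain Auslander regularity of $\mathcal O_L\llbracket H\rrbracket$ from a ``mixed'' filtration, but that filtration still has to be constructed. The hypothesis only provides a filtration on $k_L\llbracket H\rrbracket$.
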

			So let us indeed put ourselves in the situation of Lemma \ref{commutativity}, so that $H$ has a strictly saturated $p$-valuation $\omega$ and ordered basis $h_1,\dots,h_d$ with respect to which we define the $r$-analytic filtrations, for $r\in[1/p,1)$. Then the lemma tells us that $\gr_r\mathcal O_L\llbracket H^{p^N}\rrbracket$, for $N>0$, are polynomial rings over $k_L$ in $\sigma_r([h_i^{p^N}]-1)$ and $\varepsilon_L$, so that $\gr k_L\llbracket H^{p^N}\rrbracket$ is a polynomial $k_L$-algebra in $d=\dim H$ variables and in particular Noetherian regular, putting us in the situation of the above proposition. We see that computing the Gelfand-Kirilov dimension therefore boils down to computing the grade $j_{\gr(k_L\llbracket H^{p^N}\rrbracket)}(\gr(\overline M))$.\footnote{The filtration on $\overline M$ coming from a surjection from $k_L\llbracket H\rrbracket^{\oplus N_0}$, as we are used to, is indeed a good filtration over $k_L\llbracket H^{p^N}\rrbracket$: this follows from $\gr k_L\llbracket H\rrbracket$ being a finite $\gr k_L\llbracket H^{p^N}\rrbracket$-module, which we leave to the reader to check.}\\
			To this end, we can apply \cite[III\S4.1, Thm.~7 ``Generalized Roos Theorem'']{li1996zariskian}, which gives: for any finitely generated $\gr k_L\llbracket H^{p^N}\rrbracket$-module $A$ \begin{equation}\label{Krull}
				j_{\gr k_L\llbracket H^{p^N}\rrbracket}(A)=\dim H-\Kdim A,
			\end{equation}
			where $\Kdim A$ is the Krull dimension of $A$, which is the dimension of its support in $\Spec \gr k_L\llbracket H^{p^N}\rrbracket$, that is, the Krull dimension of the ring $(\gr k_L\llbracket H^{p^N}\rrbracket)/\Ann A$.\\
			In conclusion, we find that the Gelfand-Kirillov dimension of a finitely generated $k_L\llbracket H\rrbracket$-module $\overline M$ equals the Krull dimension of $\gr\overline M$ as a $\gr k_L\llbracket H^{p^N}\rrbracket$-module, for any $N\geq1$.
			\subsection{Setup}\label{setupappl}
			Let $p>2$ and let $K$ be a finite extension of $\mathbf Q_p$, with uniformizer $\varpi_K$. We will apply our preparations of the previous sections to two $p$-adic Lie groups $G$ with a saturated $p$-valued open subgroup $H$:
			\begin{equation*}
				G=\begin{cases}
					\GL_2K&\textbf{in the $\GL_2$-case,}\\
					D^\times&\textbf{in the quaternion case.}
				\end{cases}
			\end{equation*} 
			Here $D$ is the quaternion algebra over $K$. It is generated as $K$-algebra by elements $\Pi$ and $\sqrt a$, satisfying:\begin{itemize}
				\item $\Pi^2=\varpi_K$,
				\item $a\in\mathcal O_K^\times$ is an element whose square root generates the degree $2$ unramified extension of $K$, and we take it such that $\mathbf Q_p[\sqrt a]$ is unramified over $\mathbf Q_p$,
				\item $\Pi$ and $\sqrt a$ anti-commute.
			\end{itemize}
			We write $\mathcal O_D=\mathcal O_K[\Pi,\sqrt a]$, the maximal order over $\mathcal O_K$ in $D$. It consists of all elements of non-negative valuation in $D$.\\
			We first define the following subgroup 
			\begin{equation*}
				H^{1/p}=\begin{cases}
					\begin{pmatrix}
						1+p\varpi_K\mathcal O_K&p\mathcal O_K\\p\varpi_K\mathcal O_K&1+p\varpi_K\mathcal O_K
					\end{pmatrix}&\text{$\GL_2$-case,}\\
					1+p\Pi\mathcal O_D&\text{quaternion case.}
				\end{cases}
			\end{equation*}
			In each case we can equip $H^{1/p}$ with a strictly saturated $p$-valuation $\omega$. The quaternion case is the most straightforward: \begin{equation}\label{quatpval}
				\omega(x)\coloneqq v_p(x-1)-C_\text{quat}, \text{ for }x\in1+p^2\Pi\mathcal O_D.
			\end{equation}
			Here one has to substract a constant $C_\text{quat}$ in order to get a $p$-valuation that is strictly saturated: for this the image of an ordered basis has to be in the interval $\left(1/(p-1),p/(p-1)\right)$, so that $C_\text{quat}$ has to be in the interval \begin{equation*}\left(1-\frac1{p-1}-\frac1{2e_K},1-\frac1{p-1}+\frac1{2e_K}\right).\end{equation*} While the precise value is not important at all and not even used, we still set \begin{equation*}
				C_\text{quat}\coloneqq1-1/(p-1).
			\end{equation*}
			Using the logarithm $\log\colon1+p\mathcal O_D\to p^2\mathcal O_D$, which is a homeomorphism transforming $p$-powers into multiples of $p$, one can then quickly check that (\ref{quatpval}) indeed yields a $p$-valuation. \\
			In the $\GL_2$-case one first observes that \begin{equation*}
				\tilde\omega\begin{pmatrix}
					a&b\\c&d
				\end{pmatrix}\coloneqq\min\{v_p(a), v_p(b), v_p(c), v_p(d)\}
			\end{equation*} is a $p$-valuation on $\{g\in\GL_2K[\sqrt{\varpi_K}]\colon\tilde\omega(g)>1/(p-1)\}$, again by considering the logarithm which is a homeomorphism from this set onto its image. The map $x\mapsto\begin{pmatrix}
				1&\\&1/\sqrt{\varpi_K}
			\end{pmatrix}x\begin{pmatrix}
				1&\\&\sqrt{\varpi_K}
			\end{pmatrix}$ embeds $H^{1/p}$ into $\{g\in\GL_2K[\sqrt{\varpi_K}]\colon\tilde\omega(g)>1/(p-1)\}$ and this way we finally find the following strictly saturated $p$-valuation on $H^{1/p}$:
			\begin{equation}\label{gl2pval}
				\omega\begin{pmatrix}
					a&b\\c&d
				\end{pmatrix}\coloneqq\min\left\{v_p(a-1), v_p(b)+\frac1{2e_K}, v_p(c)-\frac1{2e_K}, v_p(d-1)\right\}-C_{\GL_2}
			\end{equation}
			The substraction is again to ensure strict saturatedness and this time the constant \begin{equation*}
				C_{\GL_2}\coloneqq1-\frac1{p-1}+\frac1{4e_K}
			\end{equation*} could have been any in the interval \begin{equation*}
				\left(1-\frac1{p-1},1-\frac1{p-1}+\frac1{2e_K}\right).
			\end{equation*}\\
			
			As our notation suggests, we will work with the group $H\coloneqq\left(H^{1/p}\right)^p$ and the induced $p$-valuation $\omega-1$ on it. The reason for introducing $H$ as we did, is that now the commutativity of all graded algebras associated to the various completed group rings of $H$ is automatic, by Lemma \ref{commutativity}. Thus checking the conditions of Lemma \ref{compatibilityppowers} becomes easier, and one does not need to take a group of $p$-powers anymore to compute the Gelfand-Kirillov dimension as a Krull dimension as discussed in the previous section.\\
			
			\textbf{Assumption on the field of coefficients $L$:} we assume our $p$-adic field of coefficients $L$ is big enough in the following sense. In the $\GL_2$-case it contains the normal closure of $K$ over $\mathbf Q_p$, denoted by $K'$, and in the quaternion case it should contain the normal closure $K'[\sqrt a]$ of $K[\sqrt a]$ over $\mathbf Q_p$. Here $\sqrt a$ is the same element as in the description of the generators of the quaternions $D$ over $K$. We will immediately use this assumption for the decomposition of the next section.\\
			Since the Gelfand-Kirillov dimension of a $p$-adic Banach representation does not change under extension of scalars by Lemma \ref{gklemma}, this assumption will not affect the generality of our bound.
			\subsection{Decomposing $L\otimes_{\mathbf Q_p}\Lie G$}
			The Lie algebra \begin{equation*}
				\gl_2L\cong L\otimes_K\Lie G
			\end{equation*}
			is generated over $L$ by the elements \begin{equation*}
				e=\begin{pmatrix}
					0&1\\0&0
				\end{pmatrix},f=\begin{pmatrix}
					0&0\\1&0
				\end{pmatrix},h=\begin{pmatrix}
					1&0\\0&-1
				\end{pmatrix},z=\begin{pmatrix}
					1&0\\0&1
				\end{pmatrix}.
			\end{equation*}
			In the quaternion case, when $\Lie G=D$, we use that \begin{gather}\label{quatliealg}
				\begin{aligned}
					M_2 K[\sqrt a]&\to K[\sqrt a]\otimes_KD,\\
					e&\mapsto\frac1{2\varpi_K}\left(1\otimes\Pi+\frac1{\sqrt a}\otimes\sqrt a\Pi\right),\\
					f&\mapsto\frac12\left(1\otimes\Pi-\frac1{\sqrt a}\otimes\sqrt a\Pi\right),\\
					h&\mapsto\frac1{\sqrt a}\otimes\sqrt a,\\
					z&\mapsto1
				\end{aligned}
			\end{gather} is an isomorphism of $K[\sqrt a]$-algebras.\\
			In order to use this description for $L\otimes_{\mathbf Q_p}\Lie G$ we first decompose $L\otimes_{\mathbf Q_p}K$ in the standard way: \begin{equation*}
				L\otimes_{\mathbf Q_p}K\cong\bigoplus_{\rho\colon K\hookrightarrow L}L\otimes_{\rho,K}K.
			\end{equation*} Where the sum ranges over the embeddings $\rho\colon K\hookrightarrow L$ and where in $L\otimes_{\rho,K}K$ we consider the left factor as $K$-module via $\rho$. We denote by $1_\rho$ the idempotent corresponding to $\rho$, that is, the idempotent corresponding via the isomorphism above to the element $(\ell_{\rho'})_{\rho'}$ of $\bigoplus_{\rho'}L\otimes_{\rho',K}K$ with $\ell_\rho=1$ and $\ell_{\rho'}=0$ if $\rho'\neq\rho$.\\
			This way we get a decomposition \begin{equation}\label{Liedecomp}L\otimes_{\mathbf Q_p}\Lie G\cong\bigoplus_{\rho\colon K\hookrightarrow L}L\otimes_{\rho,K}\Lie G\cong\bigoplus_{\rho\colon K\hookrightarrow L}L\otimes_{\rho,K}\gl_2K.\end{equation}
			In the quaternion case, we will always use the isomorphism $L\otimes_{\rho,K}\Lie G\cong L\otimes_{\rho,K}\gl_2K$ obtained by replacing each ``$\sqrt a$'' by ``$\Frob^k\sqrt a$'' in the first tensor factor of (\ref{quatliealg}), where $k$ is such that $\rho$ restricted to the maximal unramified extension of $\mathbf Q_p$ in $K$ is the $k$-th power of the arithmetic Frobenius (this makes sense, because our assumption $K\subset L$ allows us to consider this restriction as an automorphism).\\
			By $e_\rho,f_\rho,h_\rho,$ and $z_\rho$ we will then denote the elements of $L\otimes_{\mathbf Q_p}\Lie G$ corresponding to the respective $1\otimes e,1\otimes f,1\otimes h,$ and $1\otimes z$ in $L\otimes_{\rho,K}\gl_2K$ via the decomposition. For each $x\in\{e,f,h,z\}$ we have $x_\rho=1_\rho\cdot(1\otimes x)$.\\ Furthermore, the decomposition extends to the universal enveloping algebra and its center: \begin{equation*}
				U_L\left(L\otimes_{\mathbf Q_p}\Lie G\right)\cong\bigotimes_{\rho\colon K\hookrightarrow L}U_L\left(L\otimes_{\rho,K}\gl_2K\right)
			\end{equation*} and \begin{equation*}
				\mathcal Z\left(U_L\left(L\otimes_{\mathbf Q_p}\Lie G\right)\right)\cong\bigotimes_{\rho\colon K\hookrightarrow L}\mathcal Z\left(U_L\left(L\otimes_{\rho,K}\gl_2K\right)\right).
			\end{equation*}
			Let us also note that we have a precise description of the center of the universal enveloping algebra: it is a polynomial algebra over $L$ generated by the $z_\rho$ and the \emph{Casimir elements}\begin{equation*}
				\Delta_\rho\coloneqq\frac12h_\rho^2+e_\rho f_\rho+f_\rho e_\rho=\frac12h_\rho^2+h_\rho+2f_\rho e_\rho.
			\end{equation*}\\
			Finally, we would like to make the above decomposition and the idempotents $1_\rho$ more explicit. In particular we would like to study their integrality properties.\\
			Note first that the natural injection \begin{equation}\label{unrdecomp}
				\mathcal O_L\otimes_{\mathbf Z_p}\mathcal O_K\hookrightarrow\bigoplus_{\rho\colon K\hookrightarrow L}\mathcal O_L\otimes_{\rho,\mathcal O_K}\mathcal O_K
			\end{equation} is only an isomorphism when $K$ is unramified over $\mathbf Q_p$. So the idempotents we have found are generally not contained in $\mathcal O_L\otimes_{\mathbf Z_p}\mathcal O_K$. We will make them more explicit in order to see how far they are from being in $\mathcal O_L\otimes_{\mathbf Z_p}\mathcal O_K$.\\
			First fix an $\alpha\in\mathcal O_K^\times$ so that $\mathbf Q_p[\alpha]$ is the maximal unramified subextension of $\mathbf Q_p$ in $K$. Then the corresponding decomposition for $\mathbf Q_p[\alpha]$ holds integrally, that is, $\mathcal O_L\otimes_{\mathbf Z_p}\mathbf Z_p[\alpha]$ is isomorphic to $\bigoplus_{i=0}^{f-1}\mathcal O_L\otimes_{\Frob^i,\mathbf Z_p[\alpha]}\mathbf Z_p[\alpha]$. We have corresponding idempotents \begin{equation*}
				\sum_{j=0}^{f-1}\Frob^i(\beta_j)\otimes\alpha^j\in\mathcal O_L\otimes_{\mathbf Z_p}\mathbf Z_p[\alpha], \text{ for $i\in\{0,\dots,f-1\}$},
			\end{equation*} with $\beta_j\in\mathbf Z_p[\alpha]^\times$.\\
			The extension $K/\mathbf Q_p[\alpha]$ is totally ramified and generated by $\varpi_K$, which is the root of an Eisenstein polynomial $f(X)\in\mathbf Q_p[\alpha][X]$. We only have the rational decomposition \begin{equation*}
				L\otimes_{\mathbf Q_p[\alpha]}K\cong\bigoplus_{\tau\colon K\hookrightarrow L}L\otimes_{\tau,K}K,
			\end{equation*} where the direct sum ranges over the $\mathbf Q_p[\alpha]$-linear embeddings $\tau$. We have corresponding idempotents \begin{gather}\label{ramdec}
				\begin{aligned}
					&1_{\id}=\sum_{i=0}^{e_K-1}\gamma_i\otimes\varpi_K^i\in L\otimes_{\mathbf Q_p[\alpha]}K,\\
					&1_\tau=\sum_{i=0}^{e_K-1}\tau(\gamma_i)\otimes\varpi_K^i\in L\otimes_{\mathbf Q_p[\alpha]}K,
				\end{aligned}
			\end{gather} with $\gamma_i\in K$. The non-integrality of the idempotents comes from the ramification. Thanks to the following lemma we can still control it sufficiently.
			\begin{lemma}\label{gammaialmostint}
				There is an $R_K\in\mathbf Z$ such that the $\gamma_i\in K$ (and therefore also their conjugates) defined by (\ref{ramdec}) have valuation \begin{equation*}
					v_p(\gamma_i)=-i/e_K-R_K/e_K.
				\end{equation*}
				Moreover, $R_K/e_K=v_p\left(f'(\varpi_K)\right)+1/e_K-1$, where $f$ is the minimal polynomial of $\varpi_K$ over $\mathbf Q_p[\alpha]$, and $R_K=0$ if and only if $p\nmid e_K$.
			\end{lemma}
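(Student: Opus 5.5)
Write $F=\mathbf Q_p[\alpha]$, $E=K$, $e=e_K$, $\pi=\varpi_K$, and let $f(X)=\prod_\tau(X-\tau(\pi))$ be the minimal polynomial of $\pi$ over $F$. The plan is to identify the $\gamma_i$ by a Lagrange-interpolation (dual basis) argument. The idempotent $1_{\id}\in L\otimes_FK$ is characterised by the requirement that, under the isomorphism $L\otimes_FK\cong\prod_\tau L$, $x\otimes y\mapsto(x\tau(y))_\tau$, it maps to the vector with $1$ in the $\id$-slot and $0$ elsewhere. Writing $1_{\id}=\sum_i\gamma_i\otimes\pi^i$, this says $\sum_i\gamma_i\tau(\pi)^i=\delta_{\tau,\id}$ for all $\tau$.

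That is, $\sum_i\gamma_iX^i$ is the Lagrange polynomial $f(X)/\bigl((X-\pi)f'(\pi)\bigr)$. Expanding $f(X)/(X-\pi)=\sum_{i=0}^{e-1}b_iX^i$ by synthetic division gives
\begin{equation*}
b_i=\sum_{k>i}a_k\pi^{k-i-1},
\end{equation*}
where $f=\sum_k a_kX^k$ with $a_e=1$. Hence $\gamma_i=b_i/f'(\pi)$.

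Since $f$ is Eisenstein, $v_p(a_k)\geq1$ for $k<e$, so in $b_i$ the term $k=e$ has valuation $(e-i-1)/e$. Every other term has valuation at least $1+(k-i-1)/e>(e-i-1)/e$, using $k-i-1\geq0$. Therefore
\begin{equation*}
v_p(b_i)=(e-1-i)/e=1-1/e-i/e,
\end{equation*}
and so $v_p(\gamma_i)=-i/e-\bigl(v_p(f'(\pi))+1/e-1\bigr)$. This is exactly the claimed formula with $R_K/e:=v_p(f'(\pi))+1/e-1$. The conjugates $\tau(\gamma_i)$ have the same valuation because valuations are Galois invariant (extend to $L$).

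It remains to show $R_K\in\mathbf Z$, $R_K\geq 0$, and to characterise when $R_K=0$. Since $f'(\pi)\in K$, we have $v_p(f'(\pi))\in\frac1e\mathbf Z$, so $R_K=e\,v_p(f'(\pi))+1-e\in\mathbf Z$.

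Next, $f'(\pi)=\sum_k ka_k\pi^{k-1}$. The term $k=e$ has valuation $v_p(e)+(e-1)/e$, and the terms $k<e$ have valuation at least $1+(k-1)/e$; their valuations mod $\mathbf Z$ are distinct residues $(k-1)/e$. In particular $v_p(f'(\pi))\geq (e-1)/e$, which gives $R_K\geq0$, with equality iff $v_p(f'(\pi))=(e-1)/e$. This is the standard different computation.

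If $p\nmid e$, the $k=e$ term has valuation exactly $(e-1)/e$, strictly less than all the others (which are $\geq1$), so equality holds and $R_K=0$.

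If $p\mid e$, the $k=e$ term has valuation $\geq1+(e-1)/e$, and each $k<e$ term has valuation $\geq1>(e-1)/e$. Hence $v_p(f'(\pi))>(e-1)/e$, i.e.\ $R_K>0$.

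The only mildly delicate point is the strict-minimum bookkeeping in the valuation of $b_i$ and of $f'(\pi)$. Both reduce to the Eisenstein inequality $v_p(a_k)\geq1$, together with the distinctness of the fractional parts $(k-1)/e$ when needed.
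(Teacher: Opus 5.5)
Your proof is correct, and its core matches the paper's proof. The paper also identifies $L\otimes_{\mathbf Q_p[\alpha]}K\cong L[X]/f(X)$ and writes $1_{\id}=\frac{1}{f'(\varpi_K)}\frac{f(X)}{X-\varpi_K}$, which is your Lagrange polynomial. It then gets $v_p(c_i)=1-(i+1)/e_K$ for the coefficients of $f(X)/(X-\varpi_K)$ from the Eisenstein property, by an induction it only sketches; your explicit formula $b_i=\sum_{k>i}a_k\varpi_K^{k-i-1}$ makes this immediate.

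The one genuine divergence is the criterion $R_K=0\iff p\nmid e_K$. The paper rewrites $v_p(f'(\varpi_K))=(e_K-1)v_p(\varpi_K)$ as the condition that every difference $\varpi_K-\sigma(\varpi_K)$, $\sigma\neq\id$, has valuation exactly $v_p(\varpi_K)$. It then asserts, without proof, that this characterises tame ramification. You instead expand $f'(\varpi_K)=\sum_k k a_k\varpi_K^{k-1}$ and compare valuations term by term using only the Eisenstein condition. Your route is more elementary and fully self-contained, and it also gives $R_K\geq 0$, with $R_K\geq 1$ when $p\mid e_K$. The paper's phrasing makes the link with the conjugate differences, and hence with higher ramification, visible, but leaves that link unjustified.
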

			\begin{proof}
				We first briefly reflect on how to obtain the decomposition and the idempotents. This is done by identifying $\mathbf Q_p[\alpha][X]/f(X)$ with $K$, sending $X$ to $\varpi_K$. Then $L\otimes_{\mathbf Q_p[\alpha]}K\cong L[X]/f(X)$ and we can use the Chinese remainder theorem to get the decomposition, as $f(X)$ splits into coprime linear factors over $L$.\\
				The idempotent $1_\text{id}$ corresponds to the summand corresponding to the linear factor $X-\varpi_K$. Then one can check that, in terms of polynomials, we have \begin{equation*}
					1_\text{id}=\frac1{f'(\varpi_K)}\frac{f(X)}{X-\varpi_K}.
				\end{equation*}
				If $f(X)/(X-\varpi_K)=c_0+c_1X+\dots+c_{e_K-1}X^{e_K-1}$, then $\gamma_i=c_i/f'(\varpi_K)$. Since $(X-\varpi_K)\sum_ic_iX^i$ is Eisenstein over $\mathbf Z_p[\alpha]$ one can check inductively that $v_p(c_i)=1-(i+1)/e_K$. And indeed one then has $R_K/e_K=v_p\left(f'(\varpi_K)\right)+1/e_K-1$.\\
				Finally, $R_K=0$ if and only if $v_p\left(f'(\varpi_K)\right)=(e_K-1)v_p(\varpi_K)$, which is the case if and only if $v_p\left(\varpi_K-\sigma(\varpi_K)\right)=v_p(\varpi_K)$ for all automorphisms $\sigma\neq\id$ of $K$, which is the case if and only if the ramification is at most tame.
			\end{proof}
			Finally, to summarize, we have \begin{gather*}
				\begin{aligned}
					L\otimes_{\mathbf Q_p}K\cong L\otimes_{\mathbf Q_p}\mathbf Q_p[\alpha]\otimes_{\mathbf Q_p[\alpha]}K&\cong\left(\bigoplus_{i=0}^{f-1}L\otimes_{\Frob^i,\mathbf Q_p[\alpha]}\mathbf Q_p[\alpha]\right)\otimes_{\mathbf Q_p[\alpha]}K\\
					&\cong\bigoplus_{i=0}^{f-1}L\otimes_{\Frob^i,\mathbf Q_p[\alpha]}K\\
					&\cong\bigoplus_{i=0}^{f-1}\bigoplus_{\substack{\tau\colon K\hookrightarrow L,\\\tau|_{\mathbf Q_p[\alpha]=\Frob^i}}}L\otimes_{\tau,K}K\\&\cong\bigoplus_{\rho\colon K\hookrightarrow L}L\otimes_{\tau,K}K,
				\end{aligned}
			\end{gather*} where the last direct sum is indexed by all $\mathbf Q_p$-linear embeddings $\rho$.\\ We see that the idempotent of $L\otimes_{\mathbf Q_p}K$ corresponding to $\rho=\id$ is \begin{equation*}
				1_{\id}=\sum_{\substack{0\leq i<f,\\0\leq j<e_K}}\beta_i\gamma_j\otimes\alpha^i\varpi_K^j\in L\otimes_{\mathbf Q_p}K.
			\end{equation*} It follows that for a general $\mathbf Q_p$-linear embedding $\rho\colon K\hookrightarrow L$ one has \begin{equation}\label{idempotents}
				1_\rho=\sum_{\substack{0\leq i<f,\\0\leq j<e_K}}\rho(\beta_i\gamma_j)\otimes\alpha^i\varpi_K^j\in L\otimes_{\mathbf Q_p}K.
			\end{equation} 
			\subsection{Intermezzo: unramified $\GL_2K$ and the category $\mathcal C$}\label{interm}
			We jump a bit ahead by already considering the case that we initially studied for this work: for $\GL_2K$ with $K$ unramified and $p>2$. Ramification complicates the situation as we could already see in the previous section. We find that the proof for unramfied $\GL_2K$ is simpler and highlights well the main ideas of the general proof. We have therefore chosen to spend this brief section on the proof in this specific case. The reader who is only interested in the proof of the general case can safely skip this section.\\
			Our uniformizer for $K$ will be $\varpi_K=p$ in this section.
			\subsubsection*{Gelfand-Kirillov bound}
			\begin{theorem}\label{unrmainthm}
				Let $p>2$, suppose $K$ is unramified over $\mathbf Q_p$ of degree $f$, and suppose that we are in the $\GL_2$-case.\\
				Let $M_\circ$ be a finitely generated $p$-torsion free $\mathcal O_L\llbracket H\rrbracket$-module. Suppose that $M=M_\circ[1/p]$ has an infinitesimal character.\\
				Then $M$ has Gelfand-Kirillov dimension $\leq f$.
			\end{theorem}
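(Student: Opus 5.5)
Following the diagram (\ref{maindiagram}), the plan is to produce enough homogeneous elements of $\gr k_L\llbracket H\rrbracket$ killing a power of $\gr\overline M$ to force $\Kdim\gr\overline M\leq f$. First fix $N_1$ such that $\gr_{r_N}M_\circ$ is $\varepsilon$-torsion free for all $N\geq N_1$ (Proposition \ref{epstorsfree}). Then Lemma \ref{modeps=grmodp} gives $\gr\overline M=\gr_{r_N}M_\circ/\varepsilon$, and Corollary \ref{injectionsforNsuffbig} together with Lemma \ref{localizationtoeps} gives $\gr_{r_N}M_\circ\hookrightarrow\gr_{r_N}M_{r_N}$. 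By Section \ref{GKsection} the GK dimension equals $\Kdim\gr\overline M$ over the polynomial ring $\gr k_L\llbracket H\rrbracket$ (Lemma \ref{commutativity}). This ring is $k_L[\overline\sigma(b_\rho^e),\overline\sigma(b_\rho^f),\overline\sigma(b_\rho^h),\overline\sigma(b_\rho^z)]_\rho$ after an integral base change: since $K$ is unramified, the idempotents $1_\rho$ lie in $\mathcal O_L\otimes\mathcal O_K$ by (\ref{unrdecomp}), so we may use ``$\rho$-components'' of the ordered basis. It therefore suffices to show that $\gr\overline M$ is killed by a power of an ideal $I$ with $\dim\gr k_L\llbracket H\rrbracket/I\leq f$. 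The natural candidate for $I$ is generated, for each $\rho$, by the quadratic symbols $\overline\sigma(\tfrac12 h_\rho^2+2f_\rho e_\rho)$ and $\overline\sigma(z_\rho)$ (the center acts by a scalar, so $z_\rho$ contributes a killed linear term), together with Frobenius-twisted variants coming from $p$-power levels.

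The key step is to make the infinitesimal character act on the graded level. For $\mathfrak r\in\{e_\rho,f_\rho,h_\rho,z_\rho\}$, scaled to lie in $H^{p^N}$ through the logarithm, the element $\psi(\mathfrak r)=p^{-N}\log[\exp p^N\mathfrak r]$ sits in $D_{r_N}(H,L)$. Using Lemma \ref{compatibilityppowers} and Proposition \ref{grisunivenv} applied to $H^{p^N}$, its $r_N$-principal symbol can be computed. The point of the non-uniform valuation (\ref{gl2pval}) is that $e,f,h$ receive weights $\omega$ making $\Delta_\rho$ homogeneous. Consequently $\sigma_{r_N}(\Delta_\rho-\lambda(\Delta_\rho))$ is the leading homogeneous piece of the quadratic. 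Expressed in terms of $\sigma_{r_N}([h_i]-1)^{p^N}$, this is a polynomial in $\varepsilon$ with coefficients in $\gr k_L\llbracket H\rrbracket$, namely $p^N$-th power (Frobenius-twisted) expressions of the $\overline\sigma$'s. This element kills $\gr_{r_N}M_{r_N}$, hence kills the submodule $\gr_{r_N}M_\circ$. Reducing mod $\varepsilon$ shows that the constant coefficient kills $\gr\overline M$. Because the $\rho$-components mix under $p^N$-powers (Frobenius permutes the embeddings), varying $N$ cyclically produces, for each $\rho$, the quadric $Q_\rho=\overline\sigma(h_\rho)^2+4\overline\sigma(f_\rho)\overline\sigma(e_\rho)$ (up to units and $p$-th power twists) annihilating $\gr\overline M$ up to radicals, together with the linear $\overline\sigma(z_\rho)$. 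This is why the independence of $\gr_{r_N}\overline M$ from $N$ is essential.

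Finally, the count: $\gr k_L\llbracket H\rrbracket/\sqrt{I}$ is cut out in $\mathbf A^{4f}$ by $2f$ equations, namely $f$ linear equations and $f$ quadrics, one pair for each $\rho$, in disjoint variables. Hence it is a product of $f$ copies of the nilpotent cone of $\mathfrak{sl}_2$, which is 2-dimensional. That would give $2f$, however, so we must collect further elements. Taking $N$ and $N+1$ mixes levels: I expect the actual extra relations to arise from the symbol computation, where principal symbols of $\psi(e_\rho)$ etc. involve the cross terms $\overline\sigma(e_{\rho})\overline\sigma(f_{\rho\circ\Frob})$-type products (the BHHMS ideal, like $y_iz_i$ and $h_i$). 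Mod $\varepsilon$, the lower-degree parts of $\log$ (the $p^k(b)^{p^{N-k}}$ terms) must therefore be tracked, and the coefficients of the different $\varepsilon$-powers will yield $f$ more independent generators. Verifying these explicitly, and checking that the resulting ideal has dimension exactly $f$ (e.g.\ containing $\overline\sigma(h_\rho)$ and $\overline\sigma(e_\rho)\overline\sigma(f_\rho)$ for all $\rho$, giving a union of coordinate $f$-planes), is the main obstacle.
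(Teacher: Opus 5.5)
Your proposal has a genuine gap, and it is the one you flag as the main obstacle. You never produce the relations that bring the dimension from $2f$ down to $f$. The source you guess for them, cross terms $\overline\sigma(e_\rho)\overline\sigma(f_{\rho\circ\Frob})$ or the lower-order terms $p^k b^{p^{N-k}}$ of the logarithm, is not where they come from: strict saturatedness puts those terms in strictly higher filtration, so they never enter the principal symbols (Lemma \ref{lieelts}).

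The error starts one step earlier, in your reduction modulo $\varepsilon$. With the valuation (\ref{gl2pval}) one has $\omega(e_i)=\omega(f_i)=\omega(h_i)-\tfrac12$. At the same time $e_\rho$ comes from $p^{-2}\log e_i$, while $f_\rho$ and $h_\rho$ come from $p^{-3}\log f_i$ and $p^{-3}\log h_i$. So $\Delta_\rho$ is indeed homogeneous, but in the polynomial variables its rescaled symbol in $\gr_{r_N}\mathcal O_L\llbracket H\rrbracket$ is $\tfrac12(\mathbf h^{p^N}_{k-N})^2+2\varepsilon\,\mathbf e^{p^N}_{k-N}\mathbf f^{p^N}_{k-N}$. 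Its constant coefficient in $\varepsilon$ is only $\tfrac12\overline{\mathbf h}^2$, not your quadric $Q_\rho$; $Q_\rho$ is what a uniform valuation would give (cf.\ Remark \ref{notuniform}). The product $\mathbf e\mathbf f$ sits in the $\varepsilon^1$-coefficient, which reduction modulo $\varepsilon$ simply discards. Cycling $N$ through Frobenius twists only relabels $k\mapsto k-N$ and yields nothing new.

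The missing idea is how to reach that $\varepsilon^1$-coefficient, and this is where the $N$-independence of $\gr\overline M$ is really used.

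First, the constant term shows that $\overline{\mathbf h}_k$ acts nilpotently on $\gr\overline M$.

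Second, $\gr\overline M=\gr_{r_N}M_\circ/\varepsilon_L\gr_{r_N}M_\circ$ for all large $N$, and $\mathbf h_k^{(p^N)}=\mathbf h_k^{p^N}$ reduces to $\overline{\mathbf h}_k^{p^N}$ (Lemma \ref{compatibilityppowers}). Hence, after enlarging $N$, multiplication by $\mathbf h_k^{p^N}$ maps $\gr_{r_N}M_\circ$ into $\varepsilon\gr_{r_N}M_\circ$.

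Third, at this new level the Casimir symbol $\tfrac12(\mathbf h^{p^N})^2+2\varepsilon\,\mathbf e^{p^N}\mathbf f^{p^N}$ still kills $\gr_{r_N}M_\circ$. Since that module is $\varepsilon$-torsion free, $\tfrac12(\mathbf h^{p^N})^2/\varepsilon+2\,\mathbf e^{p^N}\mathbf f^{p^N}$ kills it as well.

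Fourth, the summand $\tfrac12(\mathbf h^{p^N})^2/\varepsilon$ still maps $\gr_{r_N}M_\circ$ into $\varepsilon\gr_{r_N}M_\circ$. Reducing modulo $\varepsilon_L$ therefore shows that $\overline{\mathbf e}^{p^N}\overline{\mathbf f}^{p^N}$ kills $\gr\overline M$, so $\overline{\mathbf e}_k\overline{\mathbf f}_k$ is nilpotent.

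Together with the nilpotence of $\overline{\mathbf h}_k$ and of $\overline{\mathbf z}_k$ (the latter from $p^{N+3}z_{\Frob^k}$), a power of the ideal $(\overline{\mathbf h}_k,\overline{\mathbf e}_k\overline{\mathbf f}_k,\overline{\mathbf z}_k\colon 0\leq k<f)$ kills $\gr\overline M$. The quotient of $\gr k_L\llbracket H\rrbracket$ by this ideal is a tensor product of $f$ copies of $k_L[e,f]/(ef)$, which has Krull dimension $f$.
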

			\begin{proof}
				We first collect the ingredients that go into the proof.
				\begin{enumerate}[label*=\alph*)]
					\item The ordered basis we choose for $H$ (with respect to which all the constructions are done) is that defined in (\ref{gl2orderedbasis}) in the next section. We simply denote it by $e_i,f_i,h_i,z_i$, for $0\leq i<f$.
					\item The $\gr_{r_{N'}}\mathcal O_L\llbracket H^{p^N}\rrbracket$, for $N\geq0$, are polynomial algebras in the $\mathbf e_k^{p^N},\mathbf f_k^{p^N},\mathbf h_k^{p^N},\mathbf z_k^{p^N}$, $0\leq k<f$, given by \begin{equation*}
						\mathbf x_k^{p^N}=\sum_{i=0}^{f-1}\Frob^{k+N}\beta_i\cdot\sigma_{r_{N'}}\left(\left[x_i^{p^N}\right]-1\right),
					\end{equation*} where $x\in\{e,f,h,z\}$ is considered sensitive to \verb|\mathbf|.\\
					This is Lemma \ref{commutativity} (together with the decomposition (\ref{unrdecomp}) of the residue fields) applied to the group $H^{1/p}$.
					\item The $p^{2N+6}\Delta_{\Frob^k}$, considered as elements of $D_{1/p}(H^{p^N},L)$, have principal symbol \begin{equation}\label{unrcasimirmultiplied}
						\sigma_{1/p}\left(p^{2N+6}\Delta_{\Frob^k}\right)= \frac12\left(\mathbf h_{k-N}^{p^N}\right)^2+2\varepsilon\mathbf e_{k-N}^{p^N}\mathbf f_{k-N}^{p^N}.
					\end{equation}
					This is Lemma \ref{lieelts} of two sections ahead.
				\end{enumerate}
				Now we choose and fix, as usual, a surjection $\mathcal O_L\llbracket H\rrbracket^{\oplus N_0}\twoheadrightarrow M_\circ$ so that we can define all the filtrations on $M_\circ$. Let us denote the infinitesimal character by $\lambda$ and take $N$ sufficiently large so that all the $\lambda\left(p^{2N+6}\Delta_{\Frob^k}\right)$ have valuation bigger than the valuation of (\ref{unrcasimirmultiplied}) ($\geq2p/(p-1)$ is clearly large enough). Then the \begin{equation*}
					p^{2N+6}\Delta_{\Frob^k}-\lambda\left(p^{2N+6}\Delta_{\Frob^k}\right)
				\end{equation*} act by $0$ on $M_{r_N}$ and their principal symbols are still as in (\ref{unrcasimirmultiplied}).\\
				Take $N$ possibly even larger, to ensure that $\gr_{r_N}M_\circ\to\gr_{r_N}M_{r_N}$ is an injection, possible by Corollary \ref{injectionsforNsuffbig}.\\
				Then the principal symbols in (\ref{unrcasimirmultiplied}) all annihilate $\gr_{r_N}M_\circ$. This means that the $\overline{\mathbf h}_k$ act nilpotently on $\gr_{r_N}M_\circ/\varepsilon_L\gr_{r_N}M_\circ$. By $\varepsilon$-torsion freeness of $\gr_{r_N}M_\circ$ and Lemma \ref{modeps=grmodp}, $\gr_{r_N}M_\circ/\varepsilon_L\gr_{r_N}M_\circ$ actually equals $\gr\overline M$, which does not depend on $N$. Therefore we can repick $N$ (at least as large as before) to find that multiplication by the $\mathbf h_k^{p^N}$ sends $\gr_{r_N}M_\circ$ into $\varepsilon\gr_{r_N}M_\circ$. The $\frac12\left(\mathbf h_k^{p^N}\right)^2/\varepsilon$ then also send $\gr_{r_N}M_\circ$ into $\varepsilon\gr_{r_N}M_\circ$. At the same time, all \begin{equation*}
					\frac12\left(\mathbf h_{k-N}^{p^N}\right)^2/\varepsilon+2\mathbf e_{k-N}^{p^N}\mathbf f_{k-N}^{p^N}
				\end{equation*} act by $0$ on $\gr_{r_N}M_\circ$, by its $\varepsilon$-torsion freeness. So we conclude that all the $\overline{\mathbf e}_k\overline{\mathbf f}_k$ act nilpotently on $\gr\overline M$.\\
				Finally, also the $\overline{\mathbf z}_k$ act nilpotently on $\gr\overline M$, because \begin{equation*}
					\sigma_{1/p}\left(p^{N+3}z_{\Frob^k}\right)=\mathbf z_{k-N}^{p^N}\in\gr_{1/p}D_{1/p}(H^{p^N},L)
				\end{equation*} where the $p^{N+3}z_{\Frob^k}$ are considered elements of $D_{1/p}(H^{p^N},L)$ -- again Lemma \ref{lieelts}. Choosing $N$ possibly even larger one finds the $\lambda\left(p^{N+3}z_{\Frob^k}\right)$ to have positive valuation and concludes as before that the $\overline{\mathbf z}_k$ act nilpotently on $\gr\overline M$.\\
				Since the Krull dimension of $\gr k_L\llbracket H\rrbracket/\left(\overline{\mathbf h}_k,\overline{\mathbf e}_k\overline{\mathbf f}_k,\overline{\mathbf z}_k\colon0\leq k<f\right)$ is $f$, it follows that $\gr\overline M$ has Krull dimension at most $f$ and this gives us the desired bound for the Gelfand-Kirillov dimension, by the discussion at the end of Section \ref{GKsection}.
			\end{proof}
			\subsubsection*{The category $\mathcal C$}
			One of the original motivations for studying the annihilation properties as above is a category -- which we call $\mathcal C$ -- of smooth admissible mod $k_L$-representations of $\GL_2\mathbf Q_{p^f}=\GL_2K$ first studied in \cite[p.~113]{BHHMSConjs} and \cite[\S5.3]{BHHMS1}. This is the category of those smooth admissible mod $k_L$-representations of $\GL_2\mathbf Q_{p^f}$ that have a central character and for which the associated graded $\gr k_L\llbracket I_1\rrbracket$-module $\gr\pi^\vee$ is killed by a power of the two-sided ideal \begin{equation*}
				I_G\coloneqq\left(\overline\sigma(ph_{\Frob^i}),\overline\sigma(e_{\Frob^i})\overline\sigma(pf_{\Frob^i}))\colon0\leq i<f\right)\subset U_{k_L}\left(\overline{\gr I_1}\right)\cong\gr k_L\llbracket I_1\rrbracket,
			\end{equation*} where $I_1=H^{1/p^2}$ is the upper triangular pro-$p$ Iwahori subgroup of $\GL_2K$.\\
			The central character condition ensures, moreover, that $\gr\pi^\vee$ is killed by the $\overline\sigma(pz_{\Frob^i})$, since these elements of $\gr k_L\llbracket I_1\rrbracket$ are (principal symbols of) linear combinations of elements $z-1$ with $z\in I_1$ central, and the center of $I_1$ acts trivially on $\pi$.\\
			It follows that for such $\pi$ the associated graded $\gr\pi^\vee$ is in fact killed by a power of the two-sided ideal \begin{equation*}
				J\coloneqq\left(\overline\sigma(ph_{\Frob^i}),\overline\sigma(e_{\Frob^i})\overline\sigma(pf_{\Frob^i}),\overline\sigma(pz_{\Frob^i})\colon0\leq i<f\right).
			\end{equation*}
			There is a clear relation with what we saw in Theorem \ref{unrmainthm}. First of all, the ideal $J$ contains the ideal \begin{equation*}
				\left(\overline{\mathbf h}_k,\overline{\mathbf e}_k\overline{\mathbf f}_k,\overline{\mathbf z}_k\colon0\leq k<f\right)\subset\gr k_L\llbracket I_1^{p^2}\rrbracket
			\end{equation*} of the proof of the theorem. So a power of this latter ideal kills $\gr\pi^\vee$ if a power of $J$ does so. Like in the proof of our theorem this gives a bound on the Gelfand-Kirillov dimension: representations $\pi$ in $\mathcal C$ have Gelfand-Kirillov dimension at most $f$ (see \cite[Cor.~5.3.5]{BHHMS1}).\\
			In the other direction, we can start with an admissible $L$-Banach representation $\Pi$ with infinitesimal character of $\GL_2K$ which is moreover unitary, so that it has an open unit ball which is $\GL_2K$-stable and of which the mod $\varpi_L$-reduction gives us then a smooth admissible $k_L$-representation $\pi$. If $M$ is the $L$-dual of $\Pi$, then $\pi^\vee$ arises as $\overline M$ for some finitely generated $\mathcal O_L\llbracket I_1\rrbracket$-module $M_\circ$ with $M_\circ[1/p]=M$. So the proof of Theorem \ref{unrmainthm} tells us $\gr\pi^\vee$ is killed by a power of $\left(\overline{\mathbf h}_k,\overline{\mathbf e}_k\overline{\mathbf f}_k,\overline{\mathbf z}_k\colon0\leq k<f\right)$.\\
			If we assume that $\pi$ moreover has a central character, then \cite{sorgdrager2025notecertaincategorymod} implies that $\gr\pi^\vee$ is actually killed by a power of $I_G$, so that $\pi$ will be in $\mathcal C$.\\
			
			The category $\mathcal C$ still has a mysterious but seemingly important role in the mod $p$ Langlands program and was before studied purely in a mod $p$ setting. We hope that the ideas going into Theorem \ref{unrmainthm} can shine some more light on it. As \cite{sorgdrager2025notecertaincategorymod} shows that the category can be defined with respect to arbitrarily small $I_1^{p^N}$, the more general Theorem \ref{mainmainthm} of below seems to point to the ideals that might define similar categories of mod $p$ representations for groups like $\GL_2K$ with $K$ ramified or for units of quaternions.\\
			Indeed, using the results in \cite{sorgdrager2025notecertaincategorymod} again, Theorem \ref{mainmainthm} specialized to quaternion units over an unramified $K$ also recovers the category of mod $p$ representations studied in \cite{hu2024modprepresentationsquaternion} in the same way as it recovers $\mathcal C$ when specialized to unramified $\GL_2$.	
			\subsection{Ordered basis and generators of the Lie algebra $\mathcal O_L\otimes_{\mathbf Z_p}H$}
			We will now choose an ordered basis for $H$, although we will not really care about ordering it and we will leave this to the reader.\\ In the $\GL_2$-case it will consist of the $e_{i,j}, f_{i,j},h_{i,j},z_{i,j}$ for $0\leq i<f, 0\leq j<e_K$, defined as follows: for $x\in\{e,f,h,z\}$ we set \begin{equation}\label{gl2orderedbasis}
				x_{i,j}\coloneqq\begin{cases}
					\exp(\alpha^i\varpi_K^jp^2e)=\begin{pmatrix}
						1&\alpha^i\varpi_K^jp^2\\&1
					\end{pmatrix}&\text{if $x=e$,}\\
					\exp(\alpha^i\varpi_K^{j+1}p^2x)&\text{else.}
				\end{cases}
			\end{equation} Using the logarithm $\log\colon H\to M_2\mathcal O_K$ one checks that these indeed define an ordered basis.\\
			In the quaternion case we pick the following ordered basis for $H=1+p^2\Pi\mathcal O_D$:
			\begin{gather*}
				\begin{aligned}
					&a_{i,j}\coloneqq\exp(p^2\alpha^i\Pi^{2j+1}),
					b_{i,j}\coloneqq\exp(p^2\sqrt a\alpha^i\Pi^{2j+1}),\\
					&c_{i,j}\coloneqq\exp(p^2\sqrt a\alpha^i\varpi_K^{1+j}),
					z_{i,j}\coloneqq\exp(p^2\alpha^i\varpi_K^{1+j}),
				\end{aligned}
			\end{gather*}where $0\leq i<f, 0\leq j<e_K$.	Again, it can be seen using the logarithm that they constitute an ordered basis.\\
			Regarding $H$ as a Lie algebra over $\mathbf Z_p$ now, we find, basically by definition, that it is generated over $\mathbf Z_p$ by the ordered basis. Using that $\mathcal O_L\otimes_{\mathbf Z_p}\mathcal O_K$ is generated over $\mathcal O_L$ by the $\sum_i\Frob^k(\beta_i)\otimes\alpha^i$ and the $1\otimes\varpi_K^j$, and motivated in the quaternion case by (\ref{quatliealg}) we observe that the $\mathcal O_L$-Lie algebra $\mathcal O_L\otimes_{\mathbf Z_p}H$ has the following generators over $\mathcal O_L$:
			\begin{itemize}
				\item the $\sum_{i=0}^{f-1}\Frob^k\beta_i\otimes x_{i,j}$ for $0\leq k<f, 0\leq j<e_K,$ and $x\in\{e,f,h,z\}$ in the $\GL_2$-case;
				\item in the quaternion case we find the following generators \begin{gather*}
					\begin{aligned}
						&\frac12\left(\sum_{i=0}^{f-1}\Frob^k\beta_i\otimes a_{i,j}+\Frob^k\left(\frac{\beta_i}{\sqrt a}\right)\otimes b_{i,j}\right),\\
						&\frac12\left(\sum_{i=0}^{f-1}\Frob^k\beta_i\otimes a_{i,j}-\Frob^k\left(\frac{\beta_i}{\sqrt a}\right)\otimes b_{i,j}\right),\\
						&\sum_{i=0}^{f-1}\Frob^k\left(\frac{\beta_i}{\sqrt a}\right)\otimes c_{i,j}, \text{ and }\sum_{k=0}^{f-1}\Frob^k\beta_i\otimes z_{i,j},
					\end{aligned}
				\end{gather*} where $0\leq k<f, 0\leq j<e_K$.
			\end{itemize}
			By multiplying the above elements by $p^N$ (which is equivalent to multiplying the arguments of the exponentials in the expressions by $p^N$), we obtain generators over $\mathcal O_L$ of $\mathcal O_L\otimes_{\mathbf Z_p}H^{p^N}=\mathcal O_L\otimes_{\mathbf Z_p}p^N\cdot H$.\\
			
			Finally, let us note that now that we have an ordered basis on $H$ we can consider the distribution algebras $D_{r_n}(H,L)$ etc.~and in particular the corresponding filtrations and whenever we will do so it will be respect to the choses basis (with the prefered order of the reader).
			\subsection{Congruences}
			We first describe the rings $\gr_{r_{N'}}\mathcal O_L\llbracket H^{p^N}\rrbracket$ in a way that is more amenable to our Lie elements.
			\begin{lemma}
				Let $N,N'\geq0$. The $k_L$-algebra $\gr_{r_{N'}}\mathcal O_L\llbracket H^{p^N}\rrbracket$ is polynomial with generators $\varepsilon_L$ and \begin{gather*}
					\begin{aligned}
						\mathbf x_{k,j}^{(p^N)}\coloneqq\sigma_{r_{N'}}\left(\sum_{i=0}^{f-1}\Frob^{k+N}\beta_i\left(\left[x_{i,j}^{p^N}\right]-1\right)\right), x\in\{e,f,h,z\}, 0\leq k<f, 0\leq j<e_K
					\end{aligned}
				\end{gather*} in the $\GL_2$-case and by $\varepsilon_L$ and
				\begin{gather*}
					\begin{aligned}
						\mathbf w_{k,j}^{(p^N)}&\coloneqq\frac12\sigma_{r_{N'}}\left(\sum_{i=0}^{f-1}\Frob^{k+N}\beta_i\left(\left[a_{i,j}^{p^N}\right]-1\right)+\Frob^{k+N}\left(\frac{\beta_i}{\sqrt a}\right)\left(\left[b_{i,j}^{p^N}\right]-1\right)\right),\\
						\mathbf h_{\ell,j}^{(p^N)}&\coloneqq\sigma_{r_{N'}}\left(\sum_{i=0}^{f-1}\Frob^{\ell+N}\left(\frac{\beta_i}{\sqrt a}\right)\left(\left[c_{i,j}^{p^N}\right]-1\right)\right),\\
						\mathbf z_{\ell,j}^{(p^N)}&\coloneqq\sigma_{r_{N'}}\left(\sum_{i=0}^{f-1}\Frob^{\ell+N}\beta_i\left(\left[z_{i,j}^{p^N}\right]-1\right)\right), 0\leq k<2f, 0\leq\ell<f, 0\leq j<e_K
					\end{aligned}
				\end{gather*} in the quaternion case.
			\end{lemma}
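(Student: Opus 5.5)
The plan is to reduce to Lemma \ref{commutativity}, applied to the saturated group $H^{1/p}$ with $N+1\geq1$. This gives that $\gr_{r_{N'}}\mathcal O_L\llbracket H^{p^N}\rrbracket=\gr_{r_{N'}}\mathcal O_L\llbracket (H^{1/p})^{p^{N+1}}\rrbracket$ is a commutative polynomial $k_L$-algebra. Its variables are $\varepsilon_L$ and the $\sigma_{r_{N'}}([y^{p^N}]-1)$, with $y$ running over the ordered basis of $H$ fixed above. (If one prefers to start from a basis of $H^{1/p}$, take the $p$-th roots of the basis of $H$, which exist by saturatedness.) Before applying the lemma, I would check that the chosen elements really form an ordered basis. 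This can be done via the logarithm, as indicated when they were defined.

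It then remains to perform a linear change of variables with coefficients in $\mathcal O_L$ whose reduction is invertible over $k_L$. I will treat the $\GL_2$-case first. Fix $x\in\{e,f,h,z\}$ and $j$. For a fixed $x$ and $j$, all the $[x_{i,j}^{p^N}]-1$ ($0\leq i<f$) have the same $r_{N'}$-valuation, because $\omega(x_{i,j})$ does not depend on $i$ (the $\alpha^i$ are units). Their principal symbols commute by Lemma \ref{commutativity}. The coefficients $\Frob^{k+N}\beta_i$ lie in $\mathcal O_L$.

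The first step is to show that the sum $\sum_i\Frob^{k+N}\beta_i\,\sigma_{r_{N'}}([x_{i,j}^{p^N}]-1)$ is nonzero. Since the $\sigma_{r_{N'}}([x_{i,j}^{p^N}]-1)$ are algebraically independent over $k_L$, it suffices that the reductions $\overline{\Frob^{k+N}\beta_i}$ are not all zero. Granting this, the principal symbol of the sum is the sum of the principal symbols, which justifies the formula for $\mathbf x_{k,j}^{(p^N)}$.

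The second step is invertibility. The $f\times f$ matrix $(\overline{\Frob^{k}\beta_i})_{k,i}$ must be invertible over $k_L$. This follows from (\ref{unrdecomp}) being an isomorphism for the unramified $\mathbf Z_p[\alpha]$. Indeed, the idempotents $\sum_i\Frob^k\beta_i\otimes\alpha^i$ form an $\mathcal O_L$-basis of $\mathcal O_L\otimes_{\mathbf Z_p}\mathbf Z_p[\alpha]$, as do the $1\otimes\alpha^i$. So the change-of-basis matrix lies in $\GL_f(\mathcal O_L)$, and its reduction is invertible. Shifting $k$ by $N$ only permutes the rows. Hence for each $(x,j)$ the $f$ elements $\mathbf x_{k,j}^{(p^N)}$ are related to the original variables by an invertible $k_L$-linear transformation of the homogeneous degree-$\omega$ piece. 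Therefore they, together with $\varepsilon_L$, again freely generate the polynomial algebra.

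In the quaternion case I would proceed the same way. The variables $\mathbf h_{\ell,j}^{(p^N)}$ and $\mathbf z_{\ell,j}^{(p^N)}$ are handled as above, using that $\sqrt a$ is a unit. For the $\mathbf w_{k,j}^{(p^N)}$ with $0\leq k<2f$, the $a_{i,j}$ and $b_{i,j}$ have equal valuation, so one combines the $2f$ elements. The relevant $2f\times 2f$ matrix has rows $(\Frob^k\beta_i,\Frob^k(\beta_i/\sqrt a))$, which I read as $\Frob$ on $\mathbf Q_p[\alpha,\sqrt a]$, the unramified extension of degree $2f$. Its reduction is invertible by the same integral idempotent decomposition of $\mathcal O_L\otimes\mathbf Z_p[\alpha,\sqrt a]$ in the basis $\alpha^i,\alpha^i\sqrt a$. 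The factor $\frac12$ is harmless since $p>2$.

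The main obstacle is precisely this matrix invertibility and the bookkeeping of indices. In particular, I need to check that the $2f$ Frobenius twists in the quaternion case correspond exactly to the $2f$ embeddings of $\mathbf Q_p[\alpha,\sqrt a]$, with the $\frac{1}{\sqrt a}$ twisting aligning with the sign of $\sqrt a$ under $\Frob^f$.
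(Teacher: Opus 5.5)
Your proposal is correct and matches the paper's own argument. The paper gives no separate proof of this lemma, but justifies it (item b) in the proof of Theorem \ref{unrmainthm}) by applying Lemma \ref{commutativity} to $H^{1/p}$ together with the integral decomposition (\ref{unrdecomp}), and your invertible change of variables fills in the linear algebra it leaves implicit. For the quaternion $\mathbf w_{k,j}^{(p^N)}$, pairing rows $k$ and $k+f$ via $\Frob^f(\sqrt a)=-\sqrt a$ settles the check you flag: the matrix becomes $\begin{pmatrix}B&DB\\B&-DB\end{pmatrix}$ with $B=(\Frob^k\beta_i)$ and $D=\mathrm{diag}(\Frob^k(1/\sqrt a))$, whose determinant $(-2)^f\det(D)\det(B)^2$ is a unit since $p>2$.
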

			\begin{remark}\label{abusnotrem}
				The notation ``$\mathbf x_{k,j}^{(p^N)}$'', $\mathbf x\in\{\mathbf e, \mathbf f,\mathbf h,\mathbf z,\mathbf w\}$, defined above omits the radius of analyticity $r_{N'}$ and thus leads to the abuse of notation where countably infinitely many different elements (different because they live in different rings) are denoted in the same way. In order to avoid confusion, we will always make sure the ambient ring will be clear.\\
				The reductions modulo $\varepsilon_L$ of these elements, denoted $\overline{\mathbf x}^{(p^N)}_{k,j}$, are however always the same element in $\gr k_L\llbracket H^{p^N}\rrbracket\cong\gr_{r_{N'}}k_L\llbracket H^{p^N}\rrbracket$.
			\end{remark}
			We have the following compatibility with respect to $p$-powers.
			\begin{lemma}
				Let $N,N'\geq0$ be integers and treat the injection\begin{equation*}
					\gr_{r_{N'}}\mathcal O_L\llbracket H^{p^N}\rrbracket\hookrightarrow\gr_{r_{N+N'}}\mathcal O_L\llbracket H\rrbracket
				\end{equation*} as an inclusion.
				In the $\GL_2$-case we then have \begin{equation*}
					\mathbf x_{k,j}^{(p^N)}=\mathbf x_{k,j}^{p^N},
				\end{equation*} where $\mathbf x_{k,j}\coloneqq\mathbf x_{k,j}^{(p^0)}$, and similarly in the quaternion case. For this reason we will omit the parentheses in the notations defined in the previous lemma.
			\end{lemma}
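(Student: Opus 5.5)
The plan is to reduce the statement to Lemma \ref{compatibilityppowers}, applied to the group $H$ with its strictly saturated $p$-valuation $\omega-1$. We treat the $\GL_2$-case in detail; the quaternion case is identical after changing the coefficients. Fix $x\in\{e,f,h,z\}$, $0\leq k<f$ and $0\leq j<e_K$, and consider the element
\begin{equation*}
	y\coloneqq\sum_{i=0}^{f-1}\Frob^{k}\beta_i\left(\left[x_{i,j}\right]-1\right)\in\mathcal O_L\llbracket H\rrbracket,
\end{equation*}
whose principal symbol in $\gr_{r_{N+N'}}\mathcal O_L\llbracket H\rrbracket$ is by definition $\mathbf x_{k,j}$. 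Two points need care. First, the definition of $\mathbf x^{(p^N)}_{k,j}$ uses $\Frob^{k+N}$ rather than $\Frob^k$, so I will match exponents accordingly. Second, Lemma \ref{compatibilityppowers} produces coefficients $c_i^{p^N}$ rather than $\Frob^N(c_i)$.

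\textbf{Step 1: the hypotheses of Lemma \ref{compatibilityppowers}.}
\begin{enumerate}[label*=\roman*)]
	\item The coefficients $\Frob^k\beta_i$ lie in $\mathbf Z_p[\alpha]^\times\subset\mathcal O_L^\times$, since the $\beta_i$ are units.
	\item For fixed $j$ and $x$, all $x_{i,j}$ ($0\leq i<f$) have the same $\omega$-value. Indeed, $\alpha^i$ is a unit, so in (\ref{gl2pval}) the $\omega$-value depends only on $j$ and on the matrix entry that $x$ occupies. Hence the $\sigma_{r_M}([x_{i,j}]-1)$ all have the same degree $r_M$-wise, namely $\omega(x_{i,j})/p^M$ up to the normalization.
	\item Commutativity holds for every $r$ by Lemma \ref{commutativity}, because $H=(H^{1/p})^p$ with $H^{1/p}$ saturated.
\end{enumerate}

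\textbf{Step 2: apply Lemma \ref{compatibilityppowers}.} It gives
\begin{equation*}
	\mathbf x_{k,j}^{p^N}=\sigma_{r_{N'}}\Bigl(\sum_i(\Frob^k\beta_i)^{p^N}\bigl([x_{i,j}^{p^N}]-1\bigr)\Bigr).
\end{equation*}
One small point: the residue classes of the $c_i$ may be what actually matters rather than the $c_i$ themselves, since only their images in $k_L$ enter the symbols.

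\textbf{Step 3: identify the coefficients.} It remains to see that this symbol equals $\sigma_{r_{N'}}\bigl(\sum_i\Frob^{k+N}\beta_i([x_{i,j}^{p^N}]-1)\bigr)$. The two sums differ by $\sum_i\bigl((\Frob^k\beta_i)^{p^N}-\Frob^{k+N}\beta_i\bigr)([x_{i,j}^{p^N}]-1)$. Since $\beta_i\in\mathbf Z_p[\alpha]$ and $\Frob$ reduces to the $p$-th power map on residue fields, each coefficient difference lies in $p\,\mathbf Z_p[\alpha]$. This difference term therefore has valuation strictly larger than the common degree of the terms. Moreover, the leading sum is nonzero: its symbol is a nonzero combination of the distinct polynomial generators $\sigma([x_{i,j}^{p^N}]-1)$ with unit coefficients, by the preceding lemma (the polynomial description). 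Hence the two principal symbols agree.

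\textbf{Quaternion case.} The same argument applies, with coefficients $\Frob^{k}\beta_i$ and $\Frob^k(\beta_i/\sqrt a)$, and with $\tfrac12$ a unit since $p>2. All the elements entering a given generator, for instance $a_{i,j}$ and $b_{i,j}$ for $\mathbf w$, have equal $\omega$-value $v_p(p^2\Pi^{2j+1})-C_\text{quat}$ (and similarly for $c_{i,j}$ and $z_{i,j}$). The congruence $\Frob(u)\equiv u^p$ modulo $p$ holds for $u\in\mathbf Z_p[\alpha,\sqrt a]$ because that ring is unramified over $\mathbf Z_p$ by the choice of $a$. The index $k$ ranging over $0\leq k<2f$ for $\mathbf w$ is handled the same way.

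The main obstacle I expect is the degree-equality hypothesis, together with the nonvanishing of the leading sum in Step 3. This is why I would first verify explicitly, from (\ref{gl2pval}) and the ordered basis, that the valuations do not depend on $i$.
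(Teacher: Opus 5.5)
Your proposal is correct and takes the paper's route: the paper's proof is just an appeal to Lemma \ref{compatibilityppowers}, and your check of its hypotheses is accurate (unit coefficients, $\omega$-values independent of $i$, and commutativity from Lemma \ref{commutativity} applied to $H^{1/p}$). Your Step 3 spells out the residue-level point the paper leaves implicit, namely that $(\Frob^k\beta_i)^{p^N}$ and $\Frob^{k+N}\beta_i$ agree modulo $p$ because $\Frob$ reduces to the $p$-th power map on the unramified ring $\mathbf Z_p[\alpha]$ (resp.\ $\mathbf Z_p[\alpha,\sqrt a]$, with $\tfrac12\in\mathbf Z_p^\times$ absorbed into the coefficients).
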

			\begin{proof}
				This is Lemma \ref{compatibilityppowers}.
			\end{proof}
			Finally, the relation with $\Lie H$ is described by the next lemma.
			\begin{lemma}\label{lieelts}
				Consider the natural map \begin{equation*}
					L\otimes_{\mathbf Q_p}\gl_2K\cong L\otimes_{\mathbf Q_p}\Lie H\to D_{r_N}(H,L).
				\end{equation*}		
				Considering all principal symbols to be taken in $\gr_{r_N}D_{r_N}(H,L)$ we have: \begin{gather*}
					\begin{aligned}
						\sigma_{r_N}(e_\rho)&=\begin{cases}
							\frac1{\varepsilon^{N+2}}\sum_{j=0}^{e_K-1}\rho(\gamma_j)\mathbf e_{k-N,j}^{p^N}&\text{$\GL_2$-case,}\\[1ex]
							\frac1{\varepsilon_K\varepsilon^{N+2}}\sum_{j=0}^{e_K-1}\rho(\gamma_j)\mathbf w_{k-N,j}^{p^N}&\text{quaternion case,}
						\end{cases}\\
						\sigma_{r_N}(f_\rho)&=\begin{cases}
							\frac1{\varepsilon_K\varepsilon^{N+2}}\sum_{j=0}^{e_K-1}\rho(\gamma_j)\mathbf f_{k-N,j}^{p^N}&\text{$\GL_2$-case,}\\[1ex]
							\frac1{\varepsilon^{N+2}}\sum_{j=0}^{e_K-1}\rho(\gamma_j)\mathbf w_{k+f-N,j}^{p^N}&\text{quaternion case,}
						\end{cases}\\
						\sigma_{r_N}(h_\rho)&=\frac1{\varepsilon_K\varepsilon^{N+2}}\sum_{j=0}^{e_K-1}\rho(\gamma_j)\mathbf h_{k-N,j}^{p^N},\\
						\sigma_{r_N}(z_\rho)&=\frac1{\varepsilon_K\varepsilon^{N+2}}\sum_{j=0}^{e_K-1}\rho(\gamma_j)\mathbf z_{k-N,j}^{p^N},
					\end{aligned}
				\end{gather*}
				where $\rho$ restricts to $\Frob^k$.\\
				The Casimir operators have the following principal symbols\begin{equation*}
					\sigma_{r_N}\left(\Delta_\rho\right)=
					\frac1{2\varepsilon_K^2\varepsilon^{2N+4}}\left(\sum_{j=0}^{e_K-1}\rho(\gamma_j)\mathbf h_{k-N,j}^{p^N}\right)^2+\frac2{\varepsilon_K\varepsilon^{2N+4}}\left(\sum_{j=0}^{e_K-1}\rho(\gamma_j)\mathbf e_{k-N,j}^{p^N}\right)\cdot\left(\sum_{j=0}^{e_K-1}\rho(\gamma_j)\mathbf f_{k-N,j}^{p^N}\right)
				\end{equation*} in the $\GL_2$-case and 
				\begin{equation*}
					\sigma_{r_N}\left(\Delta_\rho\right)=\frac1{2\varepsilon_K^2\varepsilon^{2N+4}}\left(\sum_{j=0}^{e_K-1}\rho(\gamma_j)\mathbf h_{k-N,j}^{p^N}\right)^2+\frac2{\varepsilon_K\varepsilon^{2N+4}}\left(\sum_{j=0}^{e_K-1}\rho(\gamma_j)\mathbf w_{k-N,j}^{p^N}\right)\left(\sum_{j=0}^{e_K-1}\rho(\gamma_j)\mathbf w_{k+f-N,j}^{p^N}\right)
				\end{equation*} in the quaternion case.
			\end{lemma}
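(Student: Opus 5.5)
The proof will be a direct computation of images under $\psi$, followed by reading off principal symbols in $\gr_{r_N}D_{r_N}(H,L)$ and using the identification of $\gr_{1/p}\mathcal O_L\llbracket H^{p^N}\rrbracket$ inside $\gr_{r_N}\mathcal O_L\llbracket H\rrbracket$. I treat the $\GL_2$-case first; the quaternion case is identical after transporting through (\ref{quatliealg}).

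\textbf{Step 1: rewrite $x_\rho$ through the ordered basis of $H^{p^N}$.} Fix $x\in\{e,f,h,z\}$ and write $\mathfrak x$ for the Lie algebra element with $x_{i,j}=\exp(\mathfrak x)$, so $\mathfrak x=\alpha^i\varpi_K^jp^2e$ (resp.\ $\alpha^i\varpi_K^{j+1}p^2x$). Plugging the explicit idempotent (\ref{idempotents}) into $x_\rho=1_\rho\cdot(1\otimes x)$ gives
\begin{equation*}
x_\rho=\sum_{i,j}\rho(\beta_i\gamma_j)\otimes\alpha^i\varpi_K^jx .
\end{equation*}
Next I replace $\alpha^i\varpi_K^jx$ by $p^{-(N+2)}\log(x_{i,j}^{p^N})$, with an extra factor $\varpi_K^{-1}$ when $x\neq e$. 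Via (\ref{univHtodistr}) applied to $H^{p^N}$, the image of $\log(x_{i,j}^{p^N})$ is $\log[x_{i,j}^{p^N}]=([x_{i,j}^{p^N}]-1)+(\text{higher terms})$.

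\textbf{Step 2: principal symbols.} In $\gr_{1/p}D_{1/p}(H^{p^N},L)$, the higher terms $([y]-1)^n/n$ have strictly larger valuation. This holds because $\omega-N-1$ on the basis lies in $(1/(p-1),p/(p-1))$ by strict saturatedness, so the estimate in the proof of Proposition \ref{grisunivenv} applies. Hence $\sigma(\log[y])=\sigma([y]-1)$.

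Regroup the sum over $i$ by writing $\rho(\beta_i)=\Frob^k(\beta_i)$. Here $\beta_i$ lies in the unramified part, so $\rho$ acts on it through $\Frob^k$. The sum over $i$ then becomes $\sum_i\Frob^{(k-N)+N}\beta_i([x_{i,j}^{p^N}]-1)$, whose symbol is by definition $\mathbf x_{k-N,j}^{p^N}$. Its valuation is the common value of the valuations of the $[x_{i,j}^{p^N}]-1$, because the $\beta_i$ are units and the symbols $\sigma([x_{i,j}^{p^N}]-1)$ over $i$ are linearly independent.

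The scalars $p^{-(N+2)}$ and $\varpi_K^{-1}$ contribute $\varepsilon^{-(N+2)}$ and $\varepsilon_K^{-1}$. The inclusion $D_{1/p}(H^{p^N},L)\subset D_{r_N}(H,L)$ is compatible with filtrations by the lemma on $p$-power subgroups, so the formula transfers to $\gr_{r_N}$.

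\textbf{Main obstacle: the sum over $j$.} It remains to check that $\sum_j\rho(\gamma_j)\mathbf x^{p^N}_{k-N,j}$ is homogeneous and nonzero, so that it really is the symbol of the sum. Lemma \ref{gammaialmostint} gives $v_p(\gamma_j)=-(j+R_K)/e_K$. On the other side, $\omega(x_{i,j})$ grows by exactly $1/e_K$ with $j$: for $e$ the exponent is $\varpi_K^j$, and for the others $\varpi_K^{j+1}$, with the $\pm 1/(2e_K)$ shifts in (\ref{gl2pval}) fixed for each $x$. The two effects cancel, so every term has the same degree. The terms are independent polynomial variables by Lemma \ref{commutativity}, so the sum is nonzero. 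The common factor $\varepsilon_K^{-R_K}$ coming from the $\gamma_j$ is absorbed, since the statement writes $\rho(\gamma_j)$ itself as a coefficient in $\gr L$.

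\textbf{Casimir element.} Use $\Delta_\rho=\tfrac12h_\rho^2+h_\rho+2f_\rho e_\rho$. The products $\sigma(h_\rho)^2$ and $\sigma(f_\rho)\sigma(e_\rho)$ are nonzero in the commutative polynomial ring, and both have degree $-2(N+2)-2/e_K$ plus the common $j$-independent contribution. The product $e_\rho f_\rho$ carries only a single $\varepsilon_K^{-1}$, but it is multiplied by $2$, which matches the two displayed terms. The linear term $h_\rho$ has strictly larger degree (roughly $N+2$ higher), so it drops out. The sum of the two quadratic terms is nonzero since it is nonzero modulo $\varepsilon_L$: the $\mathbf h^2$ part and the $\mathbf e\mathbf f$ part involve different variables. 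Hence the symbol is additive here, giving the stated formula.

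For the quaternion case, the generators $\mathbf w,\mathbf h,\mathbf z$ correspond under (\ref{quatliealg}), twisted by $\Frob^k$, to $e,f,h,z$. The extra $\varepsilon_K$ in $e$ rather than $f$ reflects the $1/\varpi_K$ in the image of $e$ together with $\Pi$ having valuation $1/(2e_K)$. I expect the main work to be exactly this degree bookkeeping for the $\gamma_j$ and $\Pi$ powers.
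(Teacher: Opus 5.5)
Your route is the paper's. The image $\log[y]$ has the same principal symbol as $[y]-1$; you expand $x_\rho=1_\rho(1\otimes x)$ via (\ref{idempotents}); and the sums over $j$ are homogeneous because Lemma \ref{gammaialmostint} cancels the $1/e_K$-steps in the degrees of the $\mathbf x^{p^N}_{k,j}$. Discarding the linear term $h_\rho$ of $\Delta_\rho$ is also correct. The one step that fails as written is the degree comparison of the two quadratic terms of $\sigma_{r_N}(\Delta_\rho)$. This is exactly the homogeneity check the paper leaves to the reader.

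You assert that $\sigma(h_\rho)^2$ and $\sigma(f_\rho)\sigma(e_\rho)$ both have degree $-2(N+2)-2/e_K$ plus a common contribution. This is unjustified, and your next sentence contradicts it: the scalar part of $\sigma(f_\rho)\sigma(e_\rho)$ is only $\varepsilon_K^{-1}\varepsilon^{-2N-4}$. The coefficient $2$ is a unit (as $p>2$), so it cannot make up a degree discrepancy of $1/e_K$.

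What is needed is a comparison of the degrees of the variables themselves, read off from (\ref{gl2orderedbasis}) and (\ref{gl2pval}):
\begin{itemize}
\item the $e$-type basis element has upper-right entry of valuation $2+j/e_K$, weighted by $+\frac1{2e_K}$;
\item the $f$-type element has lower-left entry of valuation $2+(j+1)/e_K$, weighted by $-\frac1{2e_K}$;
\item the $h$-type element has diagonal entries $\exp(\pm t)$ with $v_p(t)=2+(j+1)/e_K$ and no weight.
\end{itemize}
Hence $\deg\mathbf e^{p^N}_{k,j}=\deg\mathbf f^{p^N}_{k,j}=\deg\mathbf h^{p^N}_{k,j}-\frac1{2e_K}$, so $\deg\mathbf e+\deg\mathbf f=2\deg\mathbf h-\frac1{e_K}$. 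This exactly offsets the missing factor $\varepsilon_K^{-1}$ in $\sigma(e_\rho)\sigma(f_\rho)$ relative to $\sigma(h_\rho)^2$.

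In the quaternion case the same comes from $v_p(p^2\Pi^{2j+1})=2+\frac{2j+1}{2e_K}$ against $v_p(p^2\varpi_K^{j+1})=2+\frac{j+1}{e_K}$, which gives $\deg\mathbf w=\deg\mathbf h-\frac1{2e_K}$.

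This is precisely where the non-uniform $p$-valuation of Remark \ref{notuniform}, with its weights $\pm\frac1{2e_K}$, enters. Once this degree identity is in place, your nonvanishing argument (distinct monomials in a polynomial ring) finishes the proof.
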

			\begin{proof}
				First we note that in general, when $\mathbf r\in\Lie G$, $\exp\mathbf r$ exists, and the valuation of $[\exp\mathbf r]-1$ is $>1/(p-1)$ then the image of $\mathbf r$ in the distribution algebra, which is \begin{equation*}
					-\sum_{i=1}^\infty\frac{\left(1-\left[\exp\mathbf r\right]\right)^i}i
				\end{equation*} (as discussed around (\ref{psi})), has the same principal symbol as $[\exp\mathbf r]-1$.\\
				From this and the decomposition of the Lie algebra (\ref{Liedecomp}) and in particular (\ref{idempotents}) (also (\ref{quatliealg}) in the quaternion case) the expressions for the principal symbols of $e_\rho,f_\rho,h_\rho,z_\rho$ already follow, provided that the taking of principal symbols is additive with respect to our sums. Since the expressions that we claim to be the principal symbols are clearly non-zero, it suffices to check they are homogenous. This follows from Lemma \ref{gammaialmostint} and the fact that the valuation of an $\mathbf x^{p^N}_{i,j+1}$ is always precisely $1/e_K$ more than that of $\mathbf x^{p^N}_{i,j}$ for $0<j+1<e_K$ (which follows from the definition of the $p$-valuation on $H$).\\
				The formulas for the Casimir operators might then seem immediate, but again there is the subtlety of checking homogenity which we leave to the reader this time.
			\end{proof}
			
			\subsection{Bound in the general case}
			\begin{theorem}\label{mainmainthm}
				Let $p>2$, let $K$ be a $p$-adic field, and let $D$ be the quaternion algebra over $K$. If $\Pi$ is an admissible $L$-Banach representation with infinitesimal character of $\GL_2K$ or $D^\times$, then its Gelfand-Kirillov dimension is $\leq[K\colon\mathbf Q_p]$.
			\end{theorem}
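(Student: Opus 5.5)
We follow the strategy of the proof of Theorem \ref{unrmainthm}, keeping track of the ramified idempotents via Lemma \ref{gammaialmostint}. First we reduce. By Lemma \ref{gklemma} we may enlarge $L$ as in Section \ref{setupappl}, and we may restrict from $G$ to the open subgroup $H$. Let $\Pi_\circ\subset\Pi$ be an $H$-stable unit ball and $M_\circ$ its $\mathcal O_L$-dual. Then $M_\circ$ is a finitely generated, $p$-torsion free $\mathcal O_L\llbracket H\rrbracket$-module with $M=M_\circ[1/p]=\Pi^*$. By Proposition \ref{gradeq} and (\ref{Krull}), with Lemma \ref{commutativity}, the Gelfand-Kirillov dimension of $\Pi$ equals $\Kdim\gr\overline M$ over the polynomial ring $\gr k_L\llbracket H\rrbracket$. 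This ring has variables $\overline{\mathbf x}_{k,j}$ ($\GL_2$-case), resp. $\overline{\mathbf w}_{k,j},\overline{\mathbf h}_{\ell,j},\overline{\mathbf z}_{\ell,j}$ (quaternion case), so $4fe_K=4[K\colon\mathbf Q_p]$ variables in total. It therefore suffices to exhibit $3fe_K$ elements forming a "system of parameters"-type ideal $J$ with $\Kdim\gr k_L\llbracket H\rrbracket/J\le fe_K$, and to show that a power of $J$ kills $\gr\overline M$.

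Fix a presentation and choose $N\gg0$ so that three things hold. First, $\gr_{r_N}M_\circ$ is $\varepsilon$-torsion free (Proposition \ref{epstorsfree}). Second, $\gr_{r_N}M_\circ\hookrightarrow\gr_{r_N}M_{r_N}$ (Corollary \ref{injectionsforNsuffbig}). Third, the scalars $\lambda(\cdot)$ of the infinitesimal character on $z_\rho$ and $\Delta_\rho$, rescaled by the appropriate powers of $p$ and $\varpi_K$, have valuation exceeding the degrees in Lemma \ref{lieelts}. Then suitable multiples of $z_\rho-\lambda(z_\rho)$ and $\Delta_\rho-\lambda(\Delta_\rho)$ act by $0$ on $M_{r_N}$. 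Their principal symbols are the expressions of Lemma \ref{lieelts}, so these symbols annihilate $\gr_{r_N}M_\circ$. Write $H_{k}^{(\rho)}\coloneqq\sum_j\rho(\gamma_j)\mathbf h_{k-N,j}^{p^N}$, and similarly $E^{(\rho)},F^{(\rho)},Z^{(\rho)}$. Lemma \ref{gammaialmostint} says $\rho(\gamma_j)$ has valuation $-(j+R_K)/e_K$, so after multiplying by $\varepsilon_K^{R_K}$ each such sum is a polynomial in $\varepsilon_K$ with coefficients in $k_L$-combinations of the variables. The key step is then to "invert the idempotents": summing over the $e_K$ embeddings $\rho$ restricting to a fixed $\Frob^k$ (a Vandermonde-type system in the $\rho(\varpi_K)$), we recover the individual generators $\mathbf h_{k-N,j}^{p^N}$, up to powers of $\varepsilon_K$, from the $H^{(\rho)}$'s. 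Doing the same for $Z$, we want to conclude that $\varepsilon_K^{c}\mathbf z_{k,j}^{p^N}$ annihilates $\gr_{r_N}M_\circ$ for some $c$. Reducing mod $\varepsilon_L$, torsion-freeness combined with the iteration trick of Theorem \ref{unrmainthm} (repicking $N$, which is allowed because $\gr\overline M=\gr_{r_N}M_\circ/\varepsilon_L$ is independent of $N$ by Lemma \ref{modeps=grmodp}) should show that each $\overline{\mathbf z}_{k,j}$ and $\overline{\mathbf h}_{k,j}$ acts nilpotently on $\gr\overline M$. Given that, dividing the Casimir symbol by the now-known $\varepsilon$-divisible $H^2$-term should, as in the unramified proof, make the products $E^{(\rho)}F^{(\rho)}$ act nilpotently, hence also the "coefficients" of these products as polynomials in $\varepsilon_K$, by the same division argument.

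The main obstacle is this last extraction in the ramified case. The individual $\overline{\mathbf e}_{k,j}\overline{\mathbf f}_{k,j'}$ are not what comes out. What comes out are the coefficients of $\varepsilon_K$-expansions of $\bigl(\sum_j\rho(\gamma_j)\mathbf e_j\bigr)\bigl(\sum_{j'}\rho(\gamma_{j'})\mathbf f_{j'}\bigr)$, and in the wildly ramified case ($R_K\neq0$) the $\rho(\gamma_j)$ only determine the generators modulo $\varpi_L$ after multiplying by powers of $\varepsilon_K$. The division by $\varepsilon$ then loses information. We will handle this as follows. For each $k$, use the $e_K$ relations indexed by the $\rho|=\Frob^k$, expand in $\varepsilon_K$, and take the resulting coefficient elements $c_{k,m}$, $0\le m\le 2e_K-2$, of the form $\sum_{j+j'=m}\overline{\mathbf e}_{k,j}\overline{\mathbf f}_{k,j'}$ (the Remark \ref{explideal}-type ideal). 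The homogeneity from our non-uniform $p$-valuation (Remark \ref{notuniform}) ensures these coefficients are nonzero. Together with the $\overline{\mathbf h}_{k,j},\overline{\mathbf z}_{k,j}$, the ideal $J$ generated by the $c_{k,m}$ for $0\le m\le e_K-1$ is already enough: modulo $J$ the $\mathbf f_{k,\bullet}$ vanish wherever $\mathbf e_{k,0}\neq0$, by triangular elimination. Hence the quotient has dimension $e_K$ per $k$, and $\Kdim\le fe_K=[K\colon\mathbf Q_p]$. The quaternion case is identical, with $\mathbf w_{k}$ and $\mathbf w_{k+f}$ in place of $\mathbf e_k$ and $\mathbf f_k$.
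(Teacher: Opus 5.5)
There is a genuine gap at the step where you claim that every $\overline{\mathbf h}_{k,j}$, and not only every $\overline{\mathbf z}_{k,j}$, acts nilpotently on $\gr\overline M$. Everything after it depends on that claim: removing the $H^2$-term, the ideal $J$, and the triangular elimination.

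The Vandermonde inversion over the $\rho$ above $\Frob^k$ works for $z$, since $1\otimes\alpha^i\varpi_K^jz$ is itself central. But no central element is linear in $h$, so the only information about $h$ comes from $\Delta_\rho$. After rescaling, $\sigma_{r_N}(C_N)=\tfrac12U(\varepsilon_K)^2+2\varepsilon_KE(\varepsilon_K)F(\varepsilon_K)$. Its reduction modulo $\varepsilon_L$ is only $\tfrac12\overline\mu_{e_K-1,\rho}^2\overline{\mathbf h}_{k,e_K-1}^2$. Iterating then yields the successive $\varepsilon_K$-coefficients, and in these the remaining $\overline{\mathbf h}_{k,j}$ appear only in quadratic expressions mixed with $\overline{\mathbf e}\,\overline{\mathbf f}$-products.

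For $K=\mathbf Q_p(\sqrt p)$ this gives exactly the ideal of Remark \ref{explideal}, and $\overline{\mathbf h}_0$ is not in its radical. Its zero locus contains the points with $\overline{\mathbf h}_1=\overline{\mathbf e}_1=\overline{\mathbf f}_0=0$ and $\overline{\mathbf h}_0^2=-4\overline{\mathbf e}_0\overline{\mathbf f}_1$. Using all embeddings does not help. For this $K$ the rescaled symbols of $\Delta_{\rho_+}$ and $\Delta_{\rho_-}$ are interchanged by $\varepsilon_K\mapsto-\varepsilon_K$, so they generate the same coefficient ideal. So your $J$ is not known to have a power killing $\gr\overline M$. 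Moreover, the actual coefficients are not of the form $\sum_{j+j'=m}\overline{\mathbf e}_{k,j}\overline{\mathbf f}_{k,j'}$ that you state.

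What is needed instead is to keep all the coefficients $\overline c_{\rho_k,i}$, $0\le i<2e_K$, of $\tfrac12U^2+2XEF$ for a single $\rho_k$ per $k$, and kill them one at a time. Passing from coefficient $i-1$ to coefficient $i$ requires the lower coefficients, already known to kill $\gr\overline M$, to map $\gr_{r_{N'}}M_\circ$ into $\varepsilon_L\varepsilon_K^i\gr_{r_{N'}}M_\circ$. The paper obtains this from the Frobenius compatibility $\overline c_{N,i}^{p^{mf}}=\overline c_{N+mf,i}$ (with compatible lifts), by moving from $N$ to $N+sf$ with $p^{sf}$ large relative to $e_L/e_K$.

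You must then bound the Krull dimension of $k_L[u,v,w]/(\text{coefficients of }U^2-VWX)$ by $e_K$; this is Lemma \ref{dimensioncalcul}. It is not a triangular elimination, because the $u$-variables need not vanish on this locus. One argues over the fraction field of $R/\mathfrak p$. First, $X\mid U$, so $X$ divides $V$ or $W$, which reduces to $\tilde U^2=\tilde V\tilde W$ with $\deg\tilde U\le e_K-2$. All coefficients are then algebraic over the roots of $\tilde U$ together with two leading coefficients, so the transcendence degree is at most $e_K$.
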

			\begin{proof}
				Choose and fix an embedding $\rho\colon K\hookrightarrow L$. And let $0\leq k<f$ such that $\rho|_{\mathbf Q_p[\alpha]}=\Frob^k$.\\ In the light of Lemma \ref{gammaialmostint} we define \begin{equation*}
					\mu_{j,\rho}\coloneqq\varpi_K^{R_K+j}\rho(\gamma_j)\in\mathcal O_{K'}^\times\end{equation*} and we denote by $\overline\mu_{j,\rho}\in k_{K'}=k_K$ their reductions modulo $\varpi_{K'}$.\\
				Let $C$ be either the quadratic Casimir operator $\Delta_\rho$ or the central Lie algebra element $z_\rho$. For $N\in f\mathbf Z_{\geq0}$ consider the elements\begin{equation*}
					C_N\coloneqq\begin{cases}
						\left(p^{N+2}\varpi_K^{e_K+R_K}\right)^2C&\text{if }C=\Delta_\rho,\\
						p^{N+2}\varpi_K^{e_K+R_K}C&\text{if }C=z_\rho,
					\end{cases}
				\end{equation*} of $\mathcal Z\left(U(L\otimes_{\mathbf Q_p}\gl_2K)\right)$.\\
				Then, by Lemma \ref{lieelts}, $\sigma_{r_N}(C_N)\in\gr_{r_N}\mathcal O_L\llbracket H\rrbracket\subset\gr_{r_N}D_{r_N}(H,L)$ and it equals \begin{equation}\label{casimirtobeexpanded}\begin{gathered}
					\frac12\left(\sum_{j=0}^{e_K-1}\overline\mu_{e_K-1-j,\rho}\mathbf h_{k,e_K-1-j}^{p^N}\varepsilon_K^j\right)^2\\+2\varepsilon_K\left(\sum_{j=0}^{e_K-1}\overline\mu_{e_K-1-j,\rho}\mathbf e_{k,e_K-1-j}^{p^N}\varepsilon_K^j\right)\cdot\left(\sum_{j=0}^{e_K-1}\overline\mu_{e_K-1-j,\rho}\mathbf f_{k,e_K-1-j}^{p^N}\varepsilon_K^j\right)\end{gathered}
				\end{equation} in the case $C=\Delta_\rho$ and of $\GL_2$ and it equals \begin{equation*}\label{zcoeffs}
					\sum_{j=0}^{e_K-1}\overline\mu_{e_K-1-j,\rho}\mathbf z_{k,e_K-1-j}^{p^N}\varepsilon_K^j
				\end{equation*} in the case $C=z_\rho$. In the quaternion case one has to replace the $\mathbf e_{k,e_K-1-j}$ by $\mathbf w_{k,e_K-1-j}$ and the $\mathbf f_{k,e_K-1-j}$ by $\mathbf w_{k+f,e_K-1-j}$ to get the right expression in the quadratic Casimir case.\\
				(By expanding out the brackets in the quadratic Casimir case,) we find in both cases that $\sigma_{r_N}(C_N)$ is a polynomial in $\varepsilon_K$:\begin{equation}\label{polyexpansion}
					\sigma_{r_N}(C_N)\eqqcolon\sum_{i=0}^{d_C}c_{N,i}\varepsilon_K^i,
				\end{equation} where the degree equals $d_C=\begin{cases}
					2e_K-1&\text{if }C=\Delta_\rho,\\e_K-1&\text{if }C=z_\rho,
				\end{cases}$ and where the $C_{N,i}\in\gr_{r_N}\mathcal O_L\llbracket H\rrbracket$ are homogeneous polynomial expressions over $k_{K'}=k_K$ in the variables $\mathbf x_{k,j}$, with $\mathbf x\in\{\mathbf e, \mathbf f, \mathbf h, \mathbf w,\mathbf z\}$.\\
				
				\textbf{Observation:} for all $0\leq i\leq d_C$ and all $m\in\mathbf Z_{\geq0}$ we have \begin{equation*}
					\overline c_{N,i}^{p^{mf}}=\overline c_{N+mf,i}\text{ in }\gr k_L\llbracket H\rrbracket
				\end{equation*} and actually $\overline c_{N,i}\in\gr k_L\llbracket H\rrbracket\cong\gr_{r_{N+mf}}\mathcal O_L\llbracket H\rrbracket/\varepsilon_L$ lifts\footnote{Indeed, such lift in $\gr_{r_{N+mf}}\mathcal O_L\llbracket H\rrbracket$ can simply be given by the same formula as $C_{N,i}\in\gr_{r_N}\mathcal O_L\llbracket H\rrbracket$ in terms of the variables $\mathbf x_{k,j}$, but where these variables are now interpreted as elements of $\gr_{r_{N+mf}}\mathcal O_L\llbracket H\rrbracket$.} to a $p^{mf}$-th root of $c_{N,i}$ in $\gr_{r_{N+mf}}\mathcal O_L\llbracket H\rrbracket$.\\
				
				Now choose $s\in\mathbf Z_{>0}$ minimal such that $\log\left(p^{sf}\right)\geq e_L/e_K$. Then we make the following claim.\\
				
				\textbf{Claim:} let $M_\circ$ be a finitely generated $\mathcal O_L\llbracket H\rrbracket$-module and fix an $\mathcal O_L\llbracket H\rrbracket^{\oplus N_0}\twoheadrightarrow M_\circ$, inducing on it all $\Fil_{r_N}^\bullet$-filtrations as usual. Suppose that there is an $N\in f\mathbf Z_{\geq0}$ such that all $\gr_{r_{N+sfi}}M_\circ$ are $\varepsilon$-torsion free and killed by $\sigma_{r_{N+sfi}}(C_{N+sfi})$ for $0\leq i\leq d_C$. Then the $\overline c_{N+sfi,i}$ annihilate $\gr\overline M$ for $0\leq i\leq d_C$.\\
				
				We prove the claim by induction on $i$. We first observe that the $\varepsilon$-torsion freeness assumption implies the $\gr_{r_{N+sfi}}M_\circ/\varepsilon_L\gr_{r_{N+sfi}}M_\circ$ all equal $\gr\overline M$, by Lemma \ref{modeps=grmodp}. So the annihilation of $\gr\overline M$ by $\overline c_{N,0}$ is a direct consequence of the assumptions and $\sigma_{r_N}(C_N)\mod\varepsilon_L\equiv\overline c_{N,0}$.\\
				Now suppose the $\overline c_{N+sf\ell,\ell}$ annihilate $\gr\overline M$ for $0\leq \ell<i$. Then, by the observation above, multiplication by $c_{N+sfi,\ell}$ defines a map \begin{equation*}
					\gr_{r_{N+sf\ell}}M_\circ\to\varepsilon_L^{p^{sf(i-\ell)}}\gr_{r_{N+sf\ell}}M_\circ,
				\end{equation*} which means that multiplication by $c_{N+sfi,\ell}\varepsilon_K^\ell$ defines a map 
				\begin{equation*}
					\gr_{r_{N+sf\ell}}M_\circ\to\varepsilon_L\varepsilon_K^i\gr_{r_{N+sf\ell}}M_\circ,
				\end{equation*} using that \begin{equation*}
					p^{sf(i-\ell)}/e_L+\ell/e_K>\log(p^{sf})(i-\ell)/e_L+\ell/e_K\geq i/e_K.
				\end{equation*}
				As a result, the element \begin{equation*}
					t_i\coloneqq\frac1{\varepsilon_K^i}\sum_{\ell=0}^{i-1}c_{N+sfi,\ell}\varepsilon_K^\ell\in\gr_{r_{N+sf\ell}}L\llbracket H\rrbracket
				\end{equation*} stabilizes  $\gr_{r_{N+sf\ell}}M_\circ\subset\gr_{r_{N+sf\ell}}M_\circ[1/\varepsilon]$ and acts trivially on its quotient $\gr\overline M$.\\
				By assumption, the element $\sigma_{r_{N+sfi}}(C_N)/\varepsilon_K^i\in\gr_{r_{N+sf\ell}}L\llbracket H\rrbracket$ annihilates $\gr_{r_{N+sf\ell}}M_\circ\subset\gr_{r_{N+sf\ell}}M_\circ[1/\varepsilon]$. Consequently, $\sigma_{r_{N+sfi}}(C_N)/\varepsilon_K^i-t_i$ annihilates $\gr\overline M$. At the same time $\sigma_{r_{N+sfi}}(C_N)/\varepsilon_K^i$ is an element of $\gr_{r_{N+sf\ell}}\mathcal O_L\llbracket H\rrbracket$ with reduction modulo $\varepsilon_L$ equal to $\overline c_{N+sfi,i}$, by (\ref{polyexpansion}). This finishes the proof of the claim.\\
				
				We have done sufficient preparations now to prove the theorem. Let $M$ be $\Pi^*$ and take a finitely generated sub-$\mathcal O_L\llbracket H\rrbracket$-module $M_\circ$ inside $M$ so that $M_\circ[1/p]=M$.\\
				Fix $\mathcal O_L\llbracket H\rrbracket^{\oplus N_0}\twoheadrightarrow M_\circ$ as in the claim above.  For $N\gg0$ the conditions of the above claim are satisfied. Indeed, choosing $N\gg0$ we can first of all ensure that all $\gr_{r_{N'}}M_\circ$ are $\varepsilon$-torsion free for all $N'\geq N$, by Proposition \ref{epstorsfree}. Secondly, observe that the degree of $\sigma_{r_{N'}}(C_{N'})\in\gr_{r_{N'}}\mathcal O_L\llbracket H\rrbracket$ is independent of $N'$, while the degree of $\sigma_{r_{N'}}\left(\lambda(C_{N'})\right)$ is the $p$-adic valuation of $\lambda(C_{N'})$, which tends to $\infty$ as $N'\to\infty$. Therefore, \begin{equation*}
					\sigma_{r_{N'}}\left(C_{N'}-\lambda(C_{N'})\right)=\sigma_{r_{N'}}(C_{N'})\text{ for $N'\gg0$}.
				\end{equation*} 
				Since the left hand side of the above equation acts by $0$ on $\gr_{r_{N'}}M_{r_{N'}}$ and since Lemma \ref{localizationtoeps} tells us $\gr_{r_{N'}}M_\circ[1/\varepsilon]=\gr_{r_{N'}}M_{r_{N'}}$, we see that by choosing $N$ possibly larger the conditions of the above claim can indeed be met.\\
				
				Finally, choose for each $0\leq k<f$ an embedding $\rho_k\colon K\hookrightarrow L$ restricting to $\Frob^k$. Define, for $0\leq i<2e_K$, $\overline c_{\rho_k,i}\in\gr k_L\llbracket H\rrbracket$ to be the coefficient $\overline c_{0,i}$ in the case $C=\Delta_{\rho_k}$. Resuming all that was done above, we find that a power of the ideal \begin{equation}\label{casimirideal}
					I\coloneqq\left(\overline c_{\rho_k,i}, \overline{\mathbf z}_{k,i}\colon 0\leq i<2e_K,0\leq k<f\right)\subset\gr k_L\llbracket H\rrbracket
				\end{equation} annihilates $\gr\overline M$.\\
				Therefore, the Krull dimension of $\gr\overline M$, which is the Gelfand-Kirillov dimension of $\Pi$, is at most the Krull dimension of the ring $\gr k_L\llbracket H\rrbracket/I$, which we claim is at most $[K\colon\mathbf Q_p]$. To see this, first observe that \begin{equation*}
					\gr k_L\llbracket H\rrbracket/I\cong\otimes_{k=0}^{f-1}R_k/I_k
				\end{equation*}
				where the tensor product is over $k_L$ and where, for $0\leq k<f$, \begin{equation*}
					R_k\coloneqq\begin{cases}
						k_L[\overline{\mathbf e}_{k,i},\overline{\mathbf f}_{k,i},\overline{\mathbf h}_{k,i}\colon0\leq i<e_K]&\text{$\GL_2$-case,}\\
						k_L[\overline{\mathbf w}_{k,i},\overline{\mathbf w}_{k+f,i},\overline{\mathbf h}_{k,i}\colon0\leq i<e_K]&\text{quaternion case,}
					\end{cases}
				\end{equation*} is a polynomial algebra over $k_L$ with ideal \begin{equation*}
					I_k\coloneqq(\overline c_{\rho_k,i}\colon0\leq i<e_K)\subset R_k.
				\end{equation*}
				It therefore suffices to show $R_k/I_k$ has Krull dimension at most $e_K$. This follows from the Lemma \ref{dimensioncalcul} below, because $R_k/I_k$ is abstractly isomorphic to the ring of this lemma for $n=e_K-1$, which can be seen by (\ref{casimirtobeexpanded}) and the fact that the $\overline\mu_{\rho_k,j}$ are non-zero.
			\end{proof}
			\begin{lemma}\label{dimensioncalcul}
				Let $n\geq0$ and $R\coloneqq{k_L}[u_0,\dots,u_n,v_0,\dots,v_n,w_0,\dots,w_n]$ and $U(X)=\sum_{i=0}^nu_iX^i, V(X)=\sum_{i=0}^nv_iX^i, W(X)=\sum_{i=0}^nw_iX^i\in R[X]$.\\
				Let $I$ be the ideal of $R$ generated by the coefficients of $U(X)^2-V(X)W(X)X$. Then $R/I$ has Krull dimension at most $n+1$.
			\end{lemma}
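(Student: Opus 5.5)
The plan is to pass to geometry. $R/I$ is a finitely generated $k_L$-algebra, so its Krull dimension equals the dimension of the closed subset $Z=V(I)\subset\mathbf A^{3(n+1)}$. This dimension is unchanged by base change to an algebraic closure $\overline k$ of $k_L$ and by passing to the reduced structure. Concretely, I would identify a $\overline k$-point of $\mathbf A^{3(n+1)}$ with a triple $(U,V,W)$ of polynomials in $\overline k[X]$ of degree $\le n$. Then
\begin{equation*}
Z=\left\{(U,V,W)\colon U^2=X\,V\,W \text{ in } \overline k[X]\right\}.
\end{equation*}
So it suffices to show $\dim Z\le n+1$. I would do this by stratifying $Z$ into two constructible pieces and bounding each one with the fibre-dimension inequality. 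That inequality says that for a morphism $\phi\colon Y\to Y'$ of varieties, $\dim Y\le\dim\overline{\phi(Y)}+\max_y\dim\phi^{-1}(y)$, and it applies stratum by stratum.

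\emph{Stratum $U=0$.} Here $VW=0$, and since $\overline k[X]$ is a domain this forces $V=0$ or $W=0$. The stratum is therefore the union of two linear spaces, each of dimension $n+1$.

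\emph{Stratum $U\neq0$.} Then $V,W\neq0$. Since $X\mid U^2$, we get $X\mid U$, so $U=XT$ with $\deg T\le n-1$, $T\neq 0$, and $XT^2=VW$. I would bound this stratum in three steps.
\begin{enumerate}[label*=\roman*)]
\item Consider the projection $(U,V,W)\mapsto(V,W)$. Given $(V,W)$, the polynomial $U$ satisfies $U^2=XVW$, so it is determined up to sign. Hence this projection has finite fibres.
\item Its image $S$ maps to $\overline k[X]$ via $(V,W)\mapsto P=VW$. The image of $S$ lies in the image of $T\mapsto XT^2$ on the $n$-dimensional space of $T$ with $\deg T\le n-1$, so it has dimension $\le n$.
\item Fix a nonzero $P$ in that image. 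A pair $(V,W)$ with $VW=P$ is determined by the monic divisor $V/\mathrm{lc}(V)$ of $P$, of which there are finitely many, together with a scalar $\lambda\in\overline k^\times$ via $(V,W)=(\lambda V_0,\lambda^{-1}W_0)$. So each fibre has dimension $\le1$.
\end{enumerate}
Combining these, $\dim S\le n+1$, and hence this stratum has dimension $\le n+1$.

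Since $Z$ is the union of the two strata, $\dim Z\le n+1$, and therefore $\Kdim R/I\le n+1$.

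The main point requiring care is step ii), the bound on the image: it uses that $P$ is forced to be of the special form $XT^2$ with $\deg T\le n-1$. Without that, the naive count of divisor pairs $(V,W)$ would give dimension about $2n+1$ rather than $n+1$. The other point requiring care is the applicability of the fibre-dimension inequality to constructible (rather than closed) subsets, which I would handle by taking closures of the images. Note also that the argument does not use $p>2$.
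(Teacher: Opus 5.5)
Your proof is correct and is essentially the paper's argument in geometric language. The paper's case split $d=-\infty$ versus $d\geq 0$ (with $d=\deg \tilde U$, $\tilde U=U/X$ over the fraction field of $R/\mathfrak p$) is your split $U=0$ versus $U\neq 0$, and its transcendence-degree bound $d+2\le n+1$ (the $d\le n-1$ roots of $\tilde U$ plus the leading coefficients $\tilde v_a,\tilde w_b$) is the generic-point version of your count of $n$ parameters for $T$ plus one scaling parameter for splitting $XT^2=VW$. The only real difference is that the paper works prime by prime with fraction fields and Noether normalization, which avoids the constructibility bookkeeping in your fibre-dimension step; you could also shorten that step by mapping $(U,V,W)\mapsto U$ directly.
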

			\begin{proof}
				Let $\mathfrak p\subset R$ be a prime ideal containing $I$. It suffices to show $R/\mathfrak p$ has Krull dimension at most $n+1$. Let $F$ be the fraction field of $R/\mathfrak p$ and denote by $q\colon R\to F$ the obvious map.\\
				Since $U(X)^2=V(X)W(X)X$ over $F$, $X$ has to divide $U(X)$ over $F$ and then also one of $V(X)$ and $W(X)$. We obtain the following identity over $F$:\begin{equation}\label{simplifiedeq}
					\tilde U(X)^2=\tilde V(X)\tilde W(X)
				\end{equation}
				where $\tilde U(X)=q\left(U(X)\right)/X$ and one of the $\tilde V(X)$ and $\tilde W(X)$ similarly differs from the original polynomial. Denote by $d$ the degree of $\tilde U$, so that $d<n$. Let $a$ and $b$ denote the degrees of $\tilde V(X)$ and $\tilde W(X)$, respectively, so that $a,b\leq n$ and $a+b=2d$. In case $d=-\infty$, one of $V(X)$ and $W(X)$ will be $0$ over $F$ and $R/\mathfrak p$ will be generated over ${k_L}$ by the coefficients of the other polynomial, and we see that $R/\mathfrak p$ has Krull dimension at most $n+1$.\\ 
				Now assume that $d\neq-\infty$. Write $\tilde U(X)=\tilde u_0+\dots+u_dX^d$ and so on. Let $\alpha_1,\dots,\alpha_d$ be the roots (with multiplicity) of $\tilde U(X)$ in some algebraic extension of $F$. Then the coefficients of $\frac1{\tilde u_d}\tilde U(X), \frac1{\tilde v_a}\tilde V(X),$ and $\frac1{\tilde w_b}\tilde W(X)$ are all in the ${k_L}$-algebra generated by the $\alpha_1,\dots,\alpha_d$. So we see that the ${k_L}$-algebra generated by $\alpha_1,\dots,\alpha_d$ and $\tilde v_a$ and $\tilde w_b$ contains the coefficients of $\tilde U(X),\tilde V(X),$ and $\tilde W(X)$, so that this ${k_L}$-algebra contains $R/\mathfrak p$.\\
				Therefore, $F$ has transcendence degree at most $d+2<n+2$ over ${k_L}$, so that the Krull dimension of $R/\mathfrak p$ is at most $n+1$ by Noether's Normalization Lemma, as we wished to prove.		

			\end{proof}
			\begin{remark}\label{explideal}
				The proof of Theorem \ref{mainmainthm} exhibits a certain ideal $I$ of $\gr k_L\llbracket H^{p^N}\rrbracket$ of which a certain power kills $\gr\overline M$. This suggests a generalization  beyond $K$ unramified of the annihilation condition defining the categories of mod $p$ representations of \cite{BHHMSConjs} and \cite{hu2024modprepresentationsquaternion} discussed in Section \ref{interm}.\\
				Making (\ref{casimirtobeexpanded}) explicit, via the identities in the proof of Lemma \ref{gammaialmostint}, one can make these ``Casimir ideals'' completely explicit.\\ For instance, if $K=\mathbf Q_p[\sqrt p]$, the ideal one finds (the radical of the one from the proof of Theorem \ref{mainmainthm}) is \begin{equation*}
					\left(\overline{\mathbf z}_0,\overline{\mathbf z}_1,\overline{\mathbf h}_1,\overline{\mathbf e}_0\overline{\mathbf f}_0,\overline{\mathbf e}_1\overline{\mathbf f}_1,\overline{\mathbf h}_0^2+4(\overline{\mathbf e}_1\overline{\mathbf f}_0+\overline{\mathbf e}_0\overline{\mathbf f}_1)\right)\subset k_L[\overline{\mathbf e}_i,\overline{\mathbf f}_i,\overline{\mathbf h}_i,\overline{\mathbf z}_i\colon i=0,1]=\gr k_L\llbracket H\rrbracket,
				\end{equation*} where $\overline{\mathbf x}_i$ denotes the principal symbol of $[\exp(\sqrt p^{i+1}p^2x)]-1$, for $x\in\{f,h,z\}$, while $\overline{\mathbf e}_i$ is the principal symbol of $[\exp(\sqrt p^ip^2x)]-1$.\\
				Finally, in the case $f>1$, let us remark that the ideal $I$ does not depend on the choices made regarding the embeddings $\rho_k\colon K\hookrightarrow L$.
			\end{remark}
			\subsection{Equality for principal series}\label{princseries}
			In the $\GL_2$-case we now show that the bound of Theorem \ref{mainmainthm} is optimal. Take a continuous character $\chi\colon B\to L^\times$, where $B\subset\GL_2K$ is the upper-triangular Borel. Then we define the \emph{principal series representation}\begin{equation*}
				\Ind_B^{\GL_2K}(\chi)\coloneqq\left\{f\colon\GL_2K\overset{\text{cont.}}{\to}L\colon f(bg)=\chi(b)^{-1}f(g),\forall b\in B, g\in\GL_2K\right\}.
			\end{equation*}
			By the Iwasawa decomposition $B\GL_2\mathcal O_K=\GL_2K$ its restriction to $\GL_2\mathcal O_K$ is isomorphic to $\Ind_{B\cap\GL_2\mathcal O_K}^{\GL_2\mathcal O_K}(\chi)\subset C(\GL_2\mathcal O_K,L)$, so that this is clearly an admissible $L$-Banach representation of $\GL_2K$. It has an infinitesimal character by (the proof of) \cite[Proposition 2.48]{abe2023irreducibilitypadicbanachprincipal}, so that its Gelfand-Kirillov dimension is at most $[K\colon\mathbf Q_p]$. This turns out to be an equality.
			\begin{proposition}
				The Gelfand-Kirillov dimension of $\Ind_B^{\GL_2K}(\chi)$ is $[K\colon\mathbf Q_p]$. In particular, the bound of Theorem \ref{mainmainthm} is optimal in the $\GL_2$-case.
			\end{proposition}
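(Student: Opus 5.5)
The upper bound $\leq[K\colon\mathbf Q_p]$ is already available: the preceding paragraph observes that $\Ind_B^{\GL_2K}(\chi)$ has an infinitesimal character, so Theorem \ref{mainmainthm} applies. It therefore suffices to prove the lower bound $\geq[K\colon\mathbf Q_p]$.

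By Lemma \ref{gklemma} we may restrict to any compact open subgroup and enlarge $L$. We restrict to $\GL_2\mathcal O_K$ and then to a small open subgroup $H'$ admitting an Iwahori decomposition $H'=N^-_{H'}T_{H'}N_{H'}$. The restriction of the principal series to $\GL_2\mathcal O_K$ is $\Ind_{B\cap\GL_2\mathcal O_K}^{\GL_2\mathcal O_K}(\chi)$. By the Mackey decomposition over the finitely many double cosets $(B\cap\GL_2\mathcal O_K)\backslash\GL_2\mathcal O_K/H'$, its restriction to $H'$ is a finite direct sum of representations of the form $\Ind_{B_g}^{H'}(\chi^g)$, where $B_g$ is a conjugate of a Borel intersected with $H'$. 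Each summand is isomorphic, as a Banach space with an action of the unipotent part, to $C(N^-_{H'},L)$ or an analogous space $C(U,L)$ with $U\cong\mathcal O_K\cong\mathbf Z_p^{[K:\mathbf Q_p]}$.

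Dually, each summand of $\Pi^*$ is $L\llbracket H'\rrbracket\otimes_{L\llbracket B_g\rrbracket}\chi^g$. As a module over $L\llbracket U\rrbracket$ this is free of rank one, so it is a cyclic $L\llbracket H'\rrbracket$-module $L\llbracket H'\rrbracket/J$, where $J$ is generated by the elements $[b]-\chi^g(b)$ for $b\in B_g$.

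\textbf{Computing the grade.} We choose a unit ball and a lattice $M_\circ=\mathcal O_L\llbracket H'\rrbracket\otimes_{\mathcal O_L\llbracket B_g\rrbracket}\mathcal O_L(\chi^g)$, which is $p$-torsion free; shrinking $H'$ if necessary, $\chi$ is trivial modulo $\varpi_L$ on $B_g$. Then $\overline M=k_L\llbracket H'\rrbracket\otimes_{k_L\llbracket B_g\rrbracket}k_L$. For a dimension count, the cleanest route is homological rather than via associated gradeds. Since $k_L\llbracket H'\rrbracket$ is flat over $k_L\llbracket B_g\rrbracket$, we get
\[\Ext^i_{k_L\llbracket H'\rrbracket}(\overline M,k_L\llbracket H'\rrbracket)\cong\Ext^i_{k_L\llbracket B_g\rrbracket}(k_L,k_L\llbracket H'\rrbracket).\]
Because $k_L\llbracket H'\rrbracket$ is free over $k_L\llbracket B_g\rrbracket$, the right-hand side equals $\Ext^i_{k_L\llbracket B_g\rrbracket}(k_L,k_L\llbracket B_g\rrbracket)\otimes(\text{free})$. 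The group $B_g$ is a Poincaré duality group of dimension $\dim B=3[K:\mathbf Q_p]$ (uniform for small $H'$), so this Ext is nonzero only in degree $3[K:\mathbf Q_p]$. Hence $j(\overline M)=3[K:\mathbf Q_p]$. Proposition \ref{gradeq} then gives $j(M)=3[K:\mathbf Q_p]$, and therefore $d(M)=4[K:\mathbf Q_p]-3[K:\mathbf Q_p]=[K:\mathbf Q_p]$. For the direct sum, the grade is the minimum over the summands, so the same value results.

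\textbf{Main obstacle.} The delicate step is justifying that $\Ext^\bullet_{k_L\llbracket B_g\rrbracket}(k_L,k_L\llbracket B_g\rrbracket)$ is concentrated in the top degree $\dim B_g$. We would invoke Lazard's theorem that uniform, or torsion-free compact, $p$-adic Lie groups are Poincaré duality groups; equivalently, the augmentation module has grade equal to the dimension, since $\gr$ of the trivial module is $k_L$, of Krull dimension $0$, using \eqref{Krull}. This second formulation is simpler and self-contained. One applies \eqref{Krull} over $\gr k_L\llbracket B_g\rrbracket$, which is a polynomial ring for a suitable $p$-valuation by Lemma \ref{commutativity}, to get $j_{k_L\llbracket B_g\rrbracket}(k_L)=\dim B_g$, and then transfers this to $H'$ by the flat base change above.
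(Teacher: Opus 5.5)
Your proof is correct, but it takes a different route from the paper's.

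\textbf{The paper's route.} The paper works only with the subrepresentation $\Ind_{H\cap B}^H(\chi)\subset\Ind_B^{\GL_2K}(\chi)|_H$. The lower bound comes from \cite[Lemma A.8]{Gee_Newton_2022} together with Proposition \ref{gradeq}, and the upper bound from Theorem \ref{mainmainthm}. It then computes $\gr\overline M$ explicitly. Since $\chi$ reduces to the trivial character on the pro-$p$ group $H\cap B$, the graded left ideal $\gr(k_L\llbracket H\rrbracket\cdot I_{\chi,\circ})$ is generated by the symbols $\overline\sigma([x_{i,j}]-1)$ for $x\in\{e,h,z\}$. What remains is the polynomial ring in the $[K\colon\mathbf Q_p]$ lower-unipotent variables $\overline\sigma([f_{i,j}]-1)$, whose Krull dimension can be read off.

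\textbf{Your route.} You apply Mackey to the whole restriction, so you obtain the exact value directly. Neither Theorem \ref{mainmainthm} nor the subrepresentation lemma is needed; your appeal to the upper bound is redundant. You also replace the explicit graded computation by Shapiro's lemma. This reduces everything to $j_{k_L\llbracket B_g\rrbracket}(k_L)=\dim B$, hence $d=\dim G-\dim B$.

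\textbf{What each buys.} Your argument is more conceptual and works verbatim for any parabolic induction, giving $\dim G/P$. It also avoids having to identify $\gr(k_L\llbracket H\rrbracket\cdot I_{\chi,\circ})$, which the paper states rather briefly. The price is that you need two outside facts: flatness of $k_L\llbracket H'\rrbracket$ over $k_L\llbracket B_g\rrbracket$, and the grade of the trivial module. The latter comes from Lazard's Poincar\'e duality, or from (\ref{Krull}) plus Proposition \ref{gradeq} applied to $B_g$. The paper's argument stays entirely inside its own filtration machinery and makes visible which variables survive.

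\textbf{A correction to your Shapiro step.} $k_L\llbracket H'\rrbracket$ is not free over $k_L\llbracket B_g\rrbracket$. Choose a continuous section of $H'\to B_g\backslash H'$ (or of $H'\to H'/B_g$ for the other side). This shows that $k_L\llbracket H'\rrbracket$ is a countable product of copies of $k_L\llbracket B_g\rrbracket$ as a left (resp.\ right) module. It is therefore flat, because products of flat modules over a Noetherian ring are flat. The same flatness is what gives $p$-torsion freeness of your $M_\circ$.

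Since $\Ext$ commutes with products in the second variable, you get $\Ext^i(k_L,k_L\llbracket H'\rrbracket)\cong\prod\Ext^i(k_L,k_L\llbracket B_g\rrbracket)$. The grade computation is therefore unaffected.

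Finally, you do not need to shrink $H'$ to make $\chi$ trivial modulo $\varpi_L$. This is automatic, because $B_g$ is pro-$p$ and $k_L^\times$ has order prime to $p$.
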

			\begin{proof}
				The restriction of $\Ind_B^{\GL_2K}(\chi)|_H$  naturally has $\Ind_{H\cap B}^H(\chi)$ as a subrepresentations. By \cite[Lemma A.8]{Gee_Newton_2022} combined with Proposition \ref{gradeq} the Gelfand-Kirillov dimension of $\Ind_{H\cap B}^H(\chi)$ is at most that of the principal series representation. So it suffices to show $\Ind_{H\cap B}^H(\chi)$ has Gelfand-Kirillov dimension $[K\colon\mathbf Q_p]$.\\
				The character $\chi$ induces a map $L\llbracket H\cap B\rrbracket\to L$ by evaluating the character on the group elements. Denote by $I_\chi$ the kernel of this map, which is a two-sided ideal. We have \begin{equation*}
					\Ind_{H\cap B}^H(\chi)^*=L\llbracket H\rrbracket/\left(L\llbracket H\rrbracket\cdot I_\chi\right)
				\end{equation*}
				By compactness the map exists integrally, $\mathcal O_L\llbracket H\cap B\rrbracket\to\mathcal O_L$, and we denote its kernel by $I_{\chi,\circ}$. Then $M_\circ\coloneqq\mathcal O_L\llbracket H\rrbracket/\left(\mathcal O_L\llbracket H\rrbracket\cdot I_{\chi,\circ}\right)$ gives $\Ind_{H\cap B}^H(\chi)^*$ when inverting $p$ and \begin{equation*}
					\gr\overline M=\gr k_L\llbracket H\rrbracket/\gr\left(k_L\llbracket H\rrbracket\cdot I_{\chi,\circ}\right),
				\end{equation*} where $k_L\llbracket H\rrbracket\cdot I_{\chi,\circ}$ is considered with the filtration induced from $k_L\llbracket H\rrbracket$.\\
				Since $\chi$ is continuous, $H$ pro-$p$, and $k_L^\times$ prime-to-$p$, the ideal $\gr\left(k_L\llbracket H\rrbracket\cdot I_{\chi,\circ}\right)$ of the polynomial $k_L$-algebra\begin{equation*}
					\gr k_L\llbracket H\rrbracket=k_L[\overline\sigma\left([x_{i,j}]-1\right)\colon0\leq i<f,0\leq j<e_K,x\in\{e,f,h,z\}]
				\end{equation*} (using the notation of (\ref{gl2orderedbasis})) is the ideal generated by the elements $\overline\sigma\left([x_{i,j}]-1\right)$ for $x\in\{e,h,z\}$, so that \begin{equation*}
					\gr\overline M\cong k_L[\overline\sigma\left([f_{i,j}]-1\right)\colon0\leq i<f,0\leq j<e_K]
				\end{equation*} has Krull dimension $[K\colon\mathbf Q_p]$.
			\end{proof}
			\subsection{Application to some representations from $p$-adic Langlands}
			We hope that our bound and proof method will have more interesting applications to the $p$-adic Langlands programme, but for now we only wish to point out a very straightforward application of our bound to certain representations occuring in $p$-adic Langlands.\\
			
			For $n\geq1$ an integer, \cite{Caraiani_Emerton_Gee_Geraghty_Paškūnas_Shin_2018} consider any continuous Galois representation $\overline r\colon\Gal_K\to\GL_nk_L$ and $R_{\overline r}^\square$ its framed deformation ring. Assuming $p\nmid2n$, they construct a patched module $\mathbb M_\infty$ by patching automorphic forms on definite unitary groups, which is a finitely generated module over $R_\infty\llbracket\GL_n\mathcal O_K\rrbracket$, where $R_\infty$ is a complete Noetherian local flat $R_{\overline r}^\square$-algebra with residue field $k_L$. The patched module $\mathbb M_\infty$ also carries an $R_\infty$-linear action of $\GL_nK$ which is compatible with the $\GL_n\mathcal O_K$-action.\\			
			For $y\in\SpecMax R_\infty[1/p]$ denote by $L_y$ the residue field and consider \begin{equation*}
				\Pi_y\coloneqq\Hom_{\mathcal O_{L_y}}^\text{cont}\left(\mathbb M_\infty\otimes_{R_\infty}\mathcal O_{L_y},L_y\right).
			\end{equation*} It is a (unitary) admissible Banach $L_y$-representation of $\GL_nK$ and it is considered a candidate for the $p$-adic Langlands correspondent to the Galois representation which is the image of $y$ in $\SpecMax R_{\overline r}^\square[1/p]$.\\
			In \cite[Thm.~9.27]{DPSinfchar} it is shown that all $\Pi_y$ have an infinitesimal character, so that we obtain:
			\begin{corollary}
				If $n=2$ and $y\in\SpecMax R_\infty[1/p]$, then the $L_y$-Banach representation $\Pi_y$ of $\GL_2K$ has Gelfand-Kirillov dimension $\leq[K\colon\mathbf Q_p]$.
			\end{corollary}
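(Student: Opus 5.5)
This corollary should follow by combining Theorem \ref{mainmainthm} with \cite[Thm.~9.27]{DPSinfchar}, so the plan is to check that $\Pi_y$ satisfies every hypothesis of the theorem. Since $p\nmid 2n$ with $n=2$, we have $p>2$, which the theorem needs.

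First, $\Pi_y$ is an admissible $L_y$-Banach representation of $\GL_2K$. We have $\mathbb M_\infty$ finitely generated over $R_\infty\llbracket\GL_2\mathcal O_K\rrbracket$, and $R_\infty$ is local with residue field $k_L$. Hence $\mathbb M_\infty\otimes_{R_\infty}\mathcal O_{L_y}$ is finitely generated over $\mathcal O_{L_y}\llbracket\GL_2\mathcal O_K\rrbracket$, via the map $R_\infty\to\mathcal O_{L_y}$ given by $y$. After inverting $p$, its dual is finitely generated over $L_y\llbracket\GL_2\mathcal O_K\rrbracket$. The duality recalled in Section \ref{sectionFiltrations}, around (\ref{dual}), then gives admissibility. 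The $\GL_2K$-action is inherited from the $R_\infty$-linear action on $\mathbb M_\infty$.

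Second, $\Pi_y$ has an infinitesimal character: \cite[Thm.~9.27]{DPSinfchar} gives one for its locally analytic vectors. This is exactly the notion used in Section \ref{sectionLie}, namely that $\mathcal Z(U_{L_y}(L_y\otimes_{\mathbf Q_p}\gl_2K))$ acts on $D(H,L_y)\otimes_{L_y\llbracket H\rrbracket}\Pi_y^*$ through a character. Theorem \ref{mainmainthm} with $L=L_y$ then gives Gelfand-Kirillov dimension $\leq[K\colon\mathbf Q_p]$.

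The theorem's standing assumption that $L$ is large enough (Section \ref{setupappl}) is harmless. By Lemma \ref{gklemma}, extending scalars to a finite $L'/L_y$ changes neither the Gelfand-Kirillov dimension nor, clearly, the existence of an infinitesimal character. The only point where care is needed is matching the definition of infinitesimal character in \cite{DPSinfchar} with the one used here. I expect this matching to be immediate.
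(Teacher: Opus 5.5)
Your proposal is correct and matches the paper's argument: the paper obtains the corollary by applying Theorem \ref{mainmainthm} once \cite[Thm.~9.27]{DPSinfchar} provides an infinitesimal character for $\Pi_y$. Your additional checks are routine and consistent with the paper's framework: $p>2$ follows from $p\nmid 2n$, admissibility of $\Pi_y$ holds, and enlarging $L$ is harmless by Lemma \ref{gklemma}.
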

			\begin{remark}\label{patchremark} 
				\begin{enumerate}[label*=\alph*)]
					\item\label{patchremarka}  A much stronger result would be the same bound for the Gelfand-Kirillov dimension of the special fibre $\mathbb M_\infty/\mathfrak m_\infty$, where $\mathfrak m_\infty$ is the maximal ideal of $R_\infty$. As in \cite{BHHMS1} it would (at least when $R_\infty$ is formally smooth and of the right dimension) imply faithful flatness of $\mathbb M_\infty$ over $R_\infty$, which, for instance, implies that all $\Pi_y$ are non-zero.\\
					We unfortunately cannot apply Proposition \ref{gradeq} to obtain such bound, because we do not know whether any of the $\mathbb M_\infty\otimes_{R_\infty}\mathcal O_{L_y}$ is $p$-torsion free. Our methods for establishing the bound do not immediately generalize to the current situation of  \cite{DPSinfchar} of an ``infinitesimal character in families'', but we hope to be able to adapt our methods to this situation in the future.
					\item Finally, it is worth to note that \cite{DPSinfchar} provide conditions under which circumstances the Hecke eigenspaces of completed cohomology (at the place $p$) of locally symmetric spaces for a connected reductive group $G$ over $\mathbf Q$ have an infinitesimal character as $p$-adic Banach representation of $G(\mathbf Q_p)$ -- see their theorems 9.20 and 9.24. For certain Hecke eigenspaces in the case of modular curves or Shimura curves (sections \S9.7 and \S9.8 of \cite{DPSinfchar}) it is shown that these conditions are met. In these cases $G(\mathbf Q_p)$ is one of the groups we studied, so that we obtain the bound on the Gelfand-Kirillov dimensions there. 
				\end{enumerate}
			\end{remark}
			\bibliographystyle{alpha}
			\bibliography{Casimir} 
		\end{document}